\newtheorem{theorem}{Theorem}
\newtheorem{assumption}{Assumption}
\newtheorem{corollary}{Corollary}
\newtheorem{definition}{Definition}
\newtheorem{lemma}{Lemma}
\newtheorem{proposition}{Proposition}
\newtheorem{remark}{Remark}
\def\bko{{\rm 1\kern-.17em l}}
\newcommand{\an}[1]{{\color{black}#1}}
\newcommand{\fy}[1]{{\color{black}#1}}
\newcommand{\us}[1]{{\color{black}#1}}
\newcommand{\uvs}[1]{{\color{black}#1}}
\newcommand{\fytwo}[1]{{\color{black}#1}}
\newcommand{\fyRev}[1]{{\color{black}#1}}
\def\be{\begin{enumerate}}
\def\ee{\end{enumerate}}
\def\re{\mathbb{R}}
 \newcommand{\remove}[1]{}
\newcommand{\EXP}[1]{\mathsf{E}\!\left[#1\right] }
\def\sF{\mathcal{F}}
\def\Real{\mathbb{R}}
\def\g{\gamma}
\def\e{\epsilon}
\def\a{\alpha}
\begin{document}
\title{On Smoothing, Regularization and Averaging in Stochastic Approximation Methods for Stochastic Variational Inequalities
}

\author{Farzad Yousefian\thanks{Industrial \& Manufacturing Engineering,
	Pennsylvania State University,  University Park, State College, PA
		16802, USA,  \texttt{szy5@psu.edu}}, {Angelia
			Nedi{\'c}}\thanks{Industrial \& Enterprise Systems
				Engineering Department, University of Illinois at
					Urbana-Champaign, Urbana, IL 61801, USA,
				\texttt{angelia@illinois.edu}}, and 
Uday V. Shanbhag\thanks{Industrial \& Manufacturing Engineering,
	Pennsylvania State University,  University Park, State College, PA
		16802, USA, \texttt{udaybag@psu.edu};
Nedi\'{c} and
Shanbhag gratefully acknowledge the support of the NSF through awards
CMMI 0948905 ARRA (Nedi\'{c} and Shanbhag), CMMI-0742538 (Nedi\'{c}) and 
CMMI-1246887 (Shanbhag). A part of this paper has appeared in~\cite{Farzad-WSC13}.}
} 

\setlength{\textheight}{9in} \setlength{\topmargin}{0in}\setlength{\headheight}{0in}\setlength{\headsep}{0in}
\setlength{\textwidth}{6.5in} \setlength{\oddsidemargin}{0in}\setlength{\marginparsep}{0in}

\maketitle
\thispagestyle{empty}

\begin{abstract} 
\uvs{Traditionally, stochastic approximation schemes for SVIs have relied on
strong monotonicity and Lipschitzian properties of the underlying map. In
contrast, we consider monotone stochastic variational inequality (SVI) problems
where the strong monotonicity and Lipschitzian assumptions on the mappings are
weakened.  In the first part of the paper, to address such
shortcomings,  a regularized smoothed SA (RSSA) scheme is developed wherein
the stepsize, smoothing, and regularization parameters are diminishing
sequences} updated after every iteration.
Under suitable assumptions on the sequences, we show that the algorithm
generates iterates that converge to a solution in an almost sure
sense, \uvs{extending the \an{results} in~\cite{koshal12regularized} to
the non-Lipschitzian regime}. Additionally, we provide rate estimates that
relate iterates to their counterparts derived from a smoothed Tikhonov
trajectory associated with a deterministic problem. \uvs{Motivated by the need to
develop non-asymptotic rate statements, in the second part of the paper, we develop a variant of the RSSA
scheme, denoted by aRSSA$_r$, in which we employ a  weighted iterate-averaging,
{\em parametrized} by a scalar $r$ where $r = 1$ provides us with the standard
averaging scheme. We make several contributions in this context:} First,
we show that the gap function associated with the sequences by the \fyRev{aRSSA$_r$} scheme
tends to zero \an{in both an almost sure and an expected-value sense,} 
\uvs{when \an{the} parameter sequences are chosen appropriately}. Second, we show
that the gap function associated with the averaged sequence diminishes to zero
at the optimal rate ${\cal O}({1}\slash{\sqrt{K}})$ after $K$ steps when smoothing and
regularization are suppressed \uvs{and $r < 1$, \an{thus} {\em improving} the rate
statement for the standard averaging which admits a rate of ${\cal
O}(\ln(K)/\sqrt{K})$}. 
\uvs{Third, we develop a window-based variant of this
scheme that also displays the optimal rate for $r < 1$. Notably, we prove the
superiority of the scheme with $r < 1$ with its counterpart with $r=1$ in terms
of the constant factor of the error bound when the size of the averaging window
is sufficiently large. Our numerical study on  a stochastic Nash-Cournot game
provides two empirically driven insights: (i) Iterates generated by the RSSA
scheme under almost sure convergence guarantees rather than
convergence in the mean display far smaller variances in terms of gap function;
and (ii) In the window-based variants of the RSSA scheme, choosing $r<1$ leads to lower average gap
function values compared to $r = 1$, particularly when the window sizes are
large.}   \end{abstract}

\section{Introduction}\label{sec:intro}
{Given a set $X$ and a mapping $F:X \rightarrow \mathbb{R}^n$, a
variational inequality (VI) problem, denoted by VI$(X,F)$, requires a
vector $x^* \in X$ such that $F(x^*)^T(x-x^*)\geq 0$ for all $x \in
X$.} Over the last several decades, variational inequality problems have
been applied in capturing a wide range of optimization and equilibrium
problems in engineering, economics, game theory, and finance
(cf.~\cite{facchinei02finite,Rockafellar98}).  In this paper, we
consider a stochastic generalization of this problem
{where} the components of the {mapping $F$} are expectation-valued.
{More precisely, we} are interested in solving VI$(X,F)$ where
mapping $F:X
\rightarrow \mathbb{R}^n$ represents the expected value of a stochastic
mapping ${\Phi}:X\times\fyRev{\Real^d} \rightarrow \mathbb{R}^n$, i.e.,
		$F(x)\triangleq  {\EXP{\Phi(x,\xi\fytwo{(\omega)})}}$ where $\xi
		: \Omega \to \Real^d$ is a
$d-$dimensional random
variable and  $(\Omega, {\cal F}, \mathbb{P})$ represents the
probability space. \us{Then $x^*
\in X$ solves VI$(X,F)$} if
\begin{align}\label{def:SVI}
{\EXP{\Phi(x^*,\xi\fytwo{(\omega)})}^T(x-x^*)} \geq 0, \qquad \hbox{for any } x \in X.
\end{align}
For brevity, throughout this paper, $\xi$ is used to denote $\xi(\omega)$. 

The stochastic variational inequality (SVI) problem~\eqref{def:SVI} assumes relevance
in a range of settings. Such models have immediate
utility as they represent the (sufficient) optimality conditions of
stochastic convex optimization
problems~\cite{birge97introduction,SPbook} as well as the equilibrium
conditions of stochastic convex Nash
games~\cite{ravat11characterization,Nem11,kannan13addressing}.  \uvs{These}
models  find further applicability when the evaluation of the map is
corrupted by errors.
While SVIs represent a natural extension of their
	deterministic counterparts, generally deterministic schemes cannot
	be applied directly, particularly when the expectation cannot be
	evaluated efficiently or the underlying distribution
			$\mathbb{P}$ is unavailable.
Our interest {lies in developing schemes that produce asymptotically
exact solutions}. A \uvs{popular} avenue for solving
SVI problems is \uvs{through} Monte-Carlo sampling methods. Of these, sample
average approximation methods (SAA) and stochastic approximation methods
(SA) are \uvs{amongst} the well-known approaches. In the context of SAA methods
for solving stochastic {optimization problems}, asymptotic
convergence of estimators and exponential rate analysis have been
studied comprehensively by Shapiro~\cite{shap03sampling}. Extensions to
SVI problems have been provided by Xu in~\cite{Xu10}, where the
exponential convergence rate of the estimators was established under
more general assumptions on sampling while confidence statements for such
problems are examined in~\cite{lu14symmetric,lu13confidence}. 
\begin{table}[htb] 
\tiny 
\centering 
\begin{tabular}{|c|c|c|c|c|c|c|c|} 
\hline 
\textbf{Ref.} &     \textbf{Algorithm}& \textbf{Monotonicity}&
\textbf{Lipscitz} &
{\bf Metric} & {\bf Convergence} & \textbf{Rate}
\\ 
\hline 
\hline
  \cite{Houyuan08} & Standard &S & Y & Soln & a.s. & -- \\
    \hline
\cite{nemirovski_robust_2009} & Mirror-descent (averaging, \uvs{opt. problems}) & Y & N &  Gap & mean &${\cal O}\left(\frac{1}{\sqrt{k}}\right)$\\
    \hline
\cite{Jud11} &  Mirror-Prox  (averaging) &  Y &  \uvs{N} & Gap* & mean &${\cal O}\left(\frac{1}{\sqrt{k}}\right)$\\
    \hline
      \cite{koshal12regularized}& Regularized SA & Y & Y& Soln & a.s. &-- \\
    \hline
\cite{Farzad03} & Self-tuned smoothing SA & S & N & Soln* & a.s. &${\cal O}\left(\frac{1}{{k}}\right)$ \\
    \hline
   \cite{wang14}  &Incremental constraint projection & S & {Y} & Soln & a.s. & ${\cal O}\left(\frac{1}{{k}}\right)$\\
    \hline\hline
     
                \bf & Regularized smoothing SA (RSSA) & Y & N & Soln &
				a.s. & -- \\
	{this paper}	      &Averaging ( {aSA$_r$}) & Y & N & Gap & mean &${\cal O}\left(\frac{1}{\sqrt{k}}\right)$\\
		      &Window-based  Averaging (aRSSA$_{\ell,r}$) & Y & N & Gap & mean &${\cal O}\left(\frac{1}{\sqrt{k}}\right)$\\
    \hline
\end{tabular} 
\caption{A comparison of stochastic approximation methods for solving
	SVIs (S: Strongly Monotone, *: Approximate solution, a.s.: almost
			sure, Y: Yes, N: No) }
\label{tab:SVI_algorithms} 
\end{table}

A different tack is adopted by stochastic approximation (SA) schemes
which  were  first introduced by Robbins and Monro~\cite{robbins51sa} for
stochastic root-finding problems; \uvs{this class of problems} require an $x^* \in \Real^n$ such
that $\EXP{g(x,\xi)}=0$, where $\xi:\Omega \rightarrow \Real^d$ is a
random variable \uvs{and} $g(\cdot,\xi): \Real^n \rightarrow \Real^n$ is a
continuous map for any realization of $\xi$.
The
standard SA scheme is based on the \uvs{iteration} {$ x_{k+1}
:=x_k-\g_k g(x_k,\xi_k)$ for all $k\ge0$, } where $\g_k>0$ \us{denotes}
the stepsize while $\xi_k$ represents a realization of  a random
variable $\xi$ at the $k$-th iteration.  
 SA schemes have been applied
extensively in solving convex stochastic optimization
problems~\cite{Ermoliev83,Kush03,Farzad1,Zeevi11,Ghad12}. There has been
a surge of interest in the solution of SVIs via stochastic approximation
schemes. Amongst the earliest work was by Jiang and Xu~\cite{Houyuan08},
who considered SVIs with strongly monotone and Lipschitz
continuous maps over a closed and convex set and proved that the
sequence of solution iterates converge to the unique solution in an
almost sure sense.  In an extension of {that} work, motivated
by Tikhonov regularization scheme, a regularized SA method was developed
for solving SVIs with a merely monotone but continuous
mapping~\cite{koshal12regularized}.  \us{A comprehensive summary
	of the various schemes for solving SVIs via SA schemes is provided in
Table~\ref{tab:SVI_algorithms}.} \uvs{We emphasize that our focus is on
developing asymptotic and rate statements for stochastic variational inequality
problems in which the map is not necessarily Lipschitz continuous, a
problem that has been considered only in~\cite{Jud11}. Next, we provide a brief summary of the paper.}

\subsection{Motivation and summary}
  
 The first part of this paper is motivated by the need to weaken
 Lipschitzian and strong monotonicity requirements. Employing a
 smoothing technique \uvs{introduced by Steklov~\cite{steklov1} and utilized for the solution of stochastic
optimization problems
\cite{Bertsekas73,norkin93optimization,DeFarias08,Duchi12}}, our first goal is to weaken the typical
 conditions for almost sure convergence of SA methods by allowing for a
 \us{non-Lipschitzian} mapping.  
\uvs{Recall that given} a convex
function $\uvs{f}:\Real^n \rightarrow \Real$ and a random variable $\omega$
with probability distribution $P(\omega)$, the function $\hat f$ \an{given by}  
$\hat f(x) \triangleq \int_{\Real^n}f(x+\omega)P(\omega)d\omega=\EXP{f(x+\omega)}$
\an{is {\it differentiable}.}
\uvs{Incorporating this
technique within SA schemes} \uvs{allows for addressing} nonsmoothness in 
stochastic convex optimization problems~\cite{Farzad1}
and \uvs{the absence of Lipschitzian properties on the maps in monotone}
SVIs~\cite{Farzad03}. The main limitation of the proposed smoothing SA
scheme in~\cite{Farzad1,Farzad03} is that the smoothing parameter is assumed to
be fixed through implementing the SA method. As a consequence, the generated
sequence by the smoothing SA scheme is shown to converge almost surely to the
optimal solution of the approximate problem, i.e.,  $\min_{x\in X} \hat f(x)$,
where $\hat f$ is the smoothed objective function. 
\uvs{A logical extension to achieve convergence to an optimal solution lies in ``adaptive smoothing''
schemes} where the smoothing parameter is reduced adaptively to zero. \uvs{For
instance in~\cite{Polak84}, a deterministic nondifferentiable function is
minimized by solving a sequence of smoothed problems. Unfortunately, such an
approach in the current setting requires getting solutions to a sequence of
{\em smoothed} stochastic variational inequality problems, each of which
requires resolution via simulation-based schemes. An implementable
``single-loop'' scheme requires updating the smoothing parameter after every
step. In fact,  Xu~\cite{Xu01} utilizes an analogous approach in the context of
Newton schemes for the solution of nonsmooth equations where a smoothing scheme
is utilized for computing an element of the Clarke generalized Jacobian and the
smoothing parameter is updated after {\em every} Newton step. Our goal lies in
introducing a single-loop SA scheme where the steplength and smoothing
parameters are updated at each iteration.}

\uvs{Our second goal lies in weakening the strong monotonicity requirement on the mapping. We achieve this by utilizing a
		  regularization term, inspired by Tikhonov
		  regularization~\cite{facchinei02finite} and their iterative
		  counterparts~\cite{golshtein89modified,kannan10online,koshal12regularized}.
In the resulting scheme, referred to as  a {\em regularized smoothed
	stochastic approximation} (RSSA) scheme, {\em both} the smoothing parameter and the regularization
parameter are updated after every iteration and are driven to zero in
the limit, in sharp contrast with our prior work~\cite{Farzad1,Farzad03}, where the smoothing parameter is assumed to be fixed. This allows for proving asymptotic convergence to the solution set of the
 original problem, rather than an approximate problem.
Unfortunately, we cannot derive non-asymptotic rate statements without
reverting to averaging, \an{which is} the focus of the second part of the
paper.}  

In the second part of this paper,  motivated by the need to derive rate
statements, we consider averaged counterparts of the RSSA scheme. Averaging approaches have proved useful in
developing rate statements (cf.~\cite{Polyak92,Jud11})
for an averaged sequence $\bar x_k$ defined as $\bar x_k =
\sum_{t=0}^{k-1}
\bar \gamma_t x_t, $
where  
$$ \bar \gamma_{t} \triangleq \frac{\g_{t}
	}{\sum_{i=0}^{k-1}\g_i}$$
and $x_k$ is generated via a standard SA scheme. \uvs{However, when the stepsize parameter $\g_k$ is decreasing, the averaging
	weights $\bar \gamma_k$ are decreasing as well, implying that recent iterates $x_k$ are assigned less
	weight than the \uvs{older} iterates.} This suggests that it may be sensible to
	consider an increasing set of weights. In fact, Nedi\'{c} and Lee~\cite{Nedic14} showed that
	by using weights of the form
  $$ \tilde \gamma_{t} \triangleq \frac{\g^{-1}_{t}
	}{\sum_{i=0}^{{k}-1}\g^{-1}_i},$$
 the subgradient mirror-descent algorithm {attains} the optimal rate of
 convergence {\em without} requiring window-based averaging. 
 \an{In this paper, we show} optimality of the rates for an averaged sequence  
	$\bar x_{k}\uvs{(r)}\triangleq
	\sum_{t=0}^{k-1}\bar
	\gamma_{t,r} x_t$ when $r <1$ and 
	 \begin{align} \bar \gamma_{t,r} \triangleq
	\frac{\g_{t}^r}{\sum_{i=0}^{k-1}\g_i^r}.
	\label{defbargamma}\end{align}
\subsection{Outline of contributions}
 We now outline our main contributions.

(a) {\em Almost sure convergence  under monotone non-Lipscitzian
regimes:} We consider  SVIs where the mapping is monotone and
not necessarily Lipschitz continuous. A regularized smoothing SA scheme, referred
to as the RSSA scheme, is developed wherein the regularization
parameter, smoothing parameter, and the steplength are updated
after each iteration. \uvs{Under suitable
assumptions on the smoothing, regularization, and steplength sequences,
the sequence of iterates is shown to converge to the solution set of the
	SVI in an almost sure sense, which is in contrast with almost all available
	almost sure convergence results (that typically require Lipschitz continuity). We
		also derive a  bound on the mean-squared distance of
			any iterate produced by the RSSA scheme from the regularized
			smoothed trajectory (which is known to converge to a
			solution of the original SVI).}



(b) {\em Optimal averaging schemes:} \uvs{Motivated by the need to develop
rate statements,  we {first} consider an averaging-based
extension of the RSSA scheme, referred to as aRSSA$_r$, in which $\bar x_k$ is
defined as a weighted average: $\bar x_{k}(r)\triangleq \sum_{t=0}^{k-1}\bar
\gamma_{t,r} x_t$ where {$\bar \gamma_{t,r}$ is defined
by \eqref{defbargamma}}. {We derive the underlying conditions under which the
mean gap function of the averaged sequence $\bar x_k(r)$ converges to zero.}
Additionally, \an{under suitable conditions, 
we show that the {aRSSA$_r$} scheme with $r < 1$ produces a sequence of
iterates that converges to the solution set in an almost sure sense 
and the mean gap function diminishes to zero at a rate given by 
${\cal O}(1/K^{(1/6)-\delta})$ where $\delta \in (0,1/6)$.} 
When both regularization and smoothing are suppressed
and $\g_k={1}/{\sqrt{k}}$, we further show that the mean gap function
diminishes to zero at the optimal rate of ${\cal O}(1/\sqrt{K})$ when $r < 1$.
This represents an improvement over the rate ${\cal O}(\ln(K)/\sqrt{K})$
obtained for standard averaging schemes based on $r = 1$. When a window-based
averaging sequence is employed, we show that the scheme recovers the
optimal rate and provide some insights on how the constant factor in the error
bound is improved when $r < 1$ compared to the setting when $r = 1$,
 when the window size is large.}

(c) {\em Numerics:} Preliminary numerics on a set of stochastic
Nash-Cournot games support the theoretical findings and several observations can be made. First,  the \uvs{performance of the RSSA scheme} is relatively
robust to choices of the parameter sequences. \uvs{Second, iterates generated by the RSSA scheme under the almost sure convergence guarantee, rather than convergence in the mean, are shown to have lower variance in the gap function. Third, in the window-based regimes, choices of $r < 1$ tend to outperform standard averaging schemes with $r=1$, particularly when the window sizes are large.} 

The rest of the paper is organized as follows. Section \ref{sec:IRLSA}
presents the RSSA scheme, its averaged variants, and  our  main
assumptions. In  Section \ref{sec:a.s.}, we prove the almost sure
convergence of the RSSA scheme while in Section \ref{sec:ave}, we
analyze the convergence and derive the rate for the averaged variants of
the RSSA scheme. In Section \ref{sec:num}, the performance of the
proposed methods \uvs{is} tested on a stochastic Nash-Cournot game. The paper
ends with some concluding remarks in Section~\ref{sec:concl}.\\

\textbf{Notation:} A vector $x$ is assumed to be
a column vector, $x^T$ denotes the transpose of a vector $x$, and
$\|x\|$ denotes the Euclidean vector norm, i.e., $\|x\|=\sqrt{x^Tx}$.
We use $\Pi_X(x)$ to denote the Euclidean projection of a vector $x$ on
a set $X$, i.e., $\|x-\Pi_X(x)\|=\min_{y \in X}\|x-y\|$. We  abbreviate ``almost
surely'' as {\em a.s.,} \an{while} $\EXP{z}$ is used to denote the expectation of a random
variable~$z$. We let dist$(s,S)$ denote the Euclidean distance of a
vector $s \in \Real^n$ from a set $S \subset \Real^n$.   	
We use $B_n(y,\rho)$ to denote the ball
	centered at a point $y$ with a radius $\rho$, i.e., $B_n(y,\rho)=\{x \in
	\mathbb{R}^n\mid \|x-y\|\leq \rho\}$.
	\an{We use 
	$X^*$ to denote the solution set of the variational inequality problem in~\eqref{def:SVI}.}

	
\section{Algorithm and assumptions}\label{sec:IRLSA}
We present  our 
	schemes of interest and our assumptions in Sections~\ref{sec:21} and ~\ref{sec:22}, respectively.

\subsection{Algorithm}\label{sec:21}
In this section, we present the regularized smoothing stochastic
	 approximation (RSSA) scheme for solving~\eqref{def:SVI}.
We motivate our scheme by first defining the traditional stochastic
approximation scheme for SVIs. Given an $x_0 \in X$, the standard SA
scheme generates a  sequence $\{x_{k}\}$:
\begin{align}
	 \tag{SA}
	 x_{k+1}:= \Pi_X(x_k - \gamma_k \Phi(x_k,\xi_k)), \quad k \geq 0,
\end{align}
where $\{\gamma_k\}$ defines a steplength sequence, while $x_0\in X$ is an
	initial random vector independent
of the random variables $\xi_k$ and such that $\EXP{\|x_0\|^2}<\infty$. This
SA scheme for SVIs appears to have been first studied by
Jiang and Xu~\cite{Houyuan08} where a.s.\ convergence statements were
provided under Lipschitz continuity and strong monotonicity of the map.
In deterministic variational inequality problems, Tikhonov
regularization techniques have proved useful for solving merely monotone
problems through the generation of increasingly accurate solutions of a
sequence of a {\em regularized} VIs (cf.~\cite{facchinei02finite}).
Unfortunately, in stochastic regimes, such an approach is not practical
since it requires running a sequence of simulations of increasingly
longer lengths. Inspired by prior work in deterministic
VIs~\cite{golshtein89modified,kannan10online}, the stochastic iterative
Tikhonov regularization scheme was developed
subsequently~\cite{koshal12regularized}. The {\em regularized}
stochastic approximation scheme (RSA) is defined as follows:
\begin{align}
	 \tag{RSA}
	 x_{k+1}:= \Pi_X(x_k - \gamma_k (\Phi(x_k,\xi_k)+\eta_k x_k)), \quad k \geq 0,
\end{align}
where $\{\eta_k\}$ denotes a regularization sequence
that is driven to zero at specified rates to ensure a.s.\ convergence of
the sequence of iterates to the least norm solution of the monotone
stochastic variational inequality problem. However, the RSA scheme requires Lipschitz
continuity of the map. In prior work, in the context of nonsmooth
stochastic optimization~\cite{Farzad1}, we have employed {\em local
	smoothing} to construct an approximate problem with a prescribed
	Lipschitz constant. Such a problem can then be solved via standard
	SA schemes. However, this avenue provides only approximate
	solutions.  In this paper, we {resolve this shortcoming by
		presenting} a smoothed variant of the RSA
	scheme, referred to as the {\em regularized smoothed} SA (or RSSA) scheme
	under which we can recover solutions to the original problem without
	requiring Lipschitz continuity of the map:
\begin{align}\label{algorithm:IRLSA-impl}
	 \tag{RSSA}
	 x_{k+1}:= \Pi_X(x_k - \gamma_k (\Phi(x_k+z_k,\xi_k)+\eta_k x_k)), \quad k \geq 0,
\end{align}
where $z_k \in \mathbb{R}^n$ is a uniform
	random variable over {an} $n$-dimensional ball centered at the
	origin with radius $\e_k$ for any $k \geq 0$. To have a well defined
	$\Phi$ in the \ref{algorithm:IRLSA-impl} scheme, we define $X^\e$ as $\e$-enlargement of the set $X$, i.e.,
\begin{align}\label{eq:xe}
X^\e\triangleq X+B_n(0,\e),\end{align} 
where $\e$ is an upper bound of the
	sequence $\{\e_k\}$ (which will be finite under our assumptions). Note that {by introducing
		stochastic errors $w_k$,  the \ref{algorithm:IRLSA-impl} scheme is equivalent to the following method:
\begin{align}\label{algorithm:IRLSA}
\tag{RSSA$_w$}
\begin{aligned}
x_{k+1}&=\Pi_{X}\left(x_k-\g_k(F(x_k+z_k)+\eta_kx_k+w_k)\right), \qquad
	\quad &  \ k \geq 0,\cr
 w_k & \triangleq {\Phi(x_k+z_k,\xi_k)}- F(x_k+z_k), \qquad &k \geq 0.
\end{aligned}
\end{align}
In this representation of the RSSA scheme, $w_k$ is the deviation between the
	sample $\Phi(x,\xi_k)$ observed at the $k$-th iteration and the
	expected-value mapping $F(x)$, at $x=x_k+z_k$. An implicit assumption in
	our work is that we have access to a stochastic oracle which is able
	to generate random  samples $\Phi(\cdot,\xi_k)$ at a given point.
	Such an oracle is assumed to be an unbiased estimator, meaning that
	$F(x)= \EXP{\Phi(x,\xi)}$ for any $x \in X^\e$. The results in this
	paper can be extended to the case where $\Phi$ is a biased
	estimator of the mapping $F$, i.e.,  $F(x)= \EXP{\Phi(x,\xi)}+b$ for
	some $b>0$ and all $x \in X^\e$. 

The RSA and RSSA schemes in their presented forms do not easily
allow for determination of non-asymptotic rates of convergence. This
may be provided by constructing averaging-based counterparts which have been
developed both in the context of stochastic optimization
problems~\cite{Polyak92} as well as their variational inequality
counterparts~\cite{Jud11}. Strictly speaking, averaging schemes are not
distinct algorithmically but merely average the generated sequence of
iterates. In contrast with the traditional
averaging approach, we consider {\em weighted averaging} akin to recent work in stochastic
optimization~\cite{Nedic14} by
defining the sequence $\bar x_k(r)$  for $k \geq 0$ and $r\in\Real$. 
The {\em averaged} variant of the RSSA scheme using the
parameter $r$ (referred to as aRSSA$_r$) is defined as follows:
\begin{align}\label{algorithm:IRLSA-averaging}\tag{aRSSA$_{r}$}
\begin{aligned}
x_{k+1}&:=\Pi_{X}\left(x_k-\g_k(\Phi(x_k+z_k,\xi_k)+\eta_kx_k)\right), \cr 
 \bar x_{k+1}(r) &\triangleq \frac{\sum_{t=0}^k \gamma_t^r x_t}{\sum_{t=0}^k \gamma_t^r}.
 \end{aligned}
\end{align}
Throughout the paper, when we \uvs{neither regularize nor smooth, the
	aRSSA$_r$ algorithm is referred to as aSA$_r$.
Finally,  variants of aSA$_r$ in which averaging is carried out over a {\em
	window} are denoted by
	{aSA$_{\ell,r}$}. If {$0< \ell \leq k$ and $k \geq 1$},
	aSA$_{\ell,r}$ is defined as follows}: 
\begin{align}\label{alg:window}\tag{aSA$_{\ell,r}$}
\begin{aligned}
x_{k+1}&:=\Pi_{X}\left(x_k-\g_k\Phi(x_k,\xi_k)\right), \cr 
\bar x_{k+1}^{\ell}(r) &\triangleq \frac{\sum_{t=\ell}^{k} \gamma_t^r x_t}{\sum_{t=\ell}^{k} \gamma_t^r}.
 \end{aligned}
\end{align}}

\subsection{Assumptions}\label{sec:22}
We now outline the key assumptions employed in the remainder of this
	paper. Let 
$\sF_k$ denote the history of the method up to time $k$, i.e., 
$\sF_k=\{x_0,\xi_0,\xi_1,\ldots,\xi_{k-1}, z_1,\ldots,z_{k-1}\}$ for
$k\ge 1$ and $\sF_0=\{x_0\}$. Our first set of assumptions is on the
	properties of the set $X$, the mapping $F$, and the non-emptiness of
	the solution set $X^*$ \an{for the problem in~\eqref{def:SVI}}.

\begin{assumption}[{\bf Problem properties}]\label{assum:step_error_sub_1} 
Let the following hold:\\ 
(a) \ The set $X \subset \Real^n$ is closed, bounded, and convex; \\
(b) \  The mapping $F(x)=\EXP{\Phi(x,\xi)}$ is 
monotone and continuous over the set $X^\e$ given in~\eqref{eq:xe};\\
(c) \  There exists a scalar $C>0$ such that $\EXP{\|\Phi(x,\xi)\|^2}\leq C^2$ for any $x \in X^\e$;\\
(d) \ $X^* \neq \emptyset$, i.e., there exists an $x^* \in X$ such that $(x-x^*)^T\EXP{\Phi(x^*,\xi)} \geq 0$ for all $x \in X$.
\end{assumption}
\begin{remark}\label{rem:C-M} 
Note that by using Jensen's inequality and Assumption \ref{assum:step_error_sub_1}(c) we can write
\[\|F(x)\|\leq \EXP{\|\Phi(x,\xi)\|}= \sqrt{\EXP{\|\Phi(x,\xi)\|}^2} \leq \sqrt{\EXP{\|\Phi(x,\xi)\|^2}}\leq  \sqrt{C^2} =C.\]
 In our analysis, we make use of the preceding inequality, i.e. \begin{align}\label{ineq:F-C}\|F(x)\| \leq \EXP{\|\Phi(x,\xi)\|}\leq C, \quad \hbox{for all }x \in X^\e.\end{align}
 We also utilize the boundedness of $X$ by which there exists \uvs{a positive
 scalar $M$} such that 
 \[\|x\| \leq M\qquad \hbox{for all $x \in X$}.\] 
 \end{remark}
In the implementation of the RSSA scheme, two distinct random variables
	require discussion. First, the random vector $\xi$ is
	\uvs{idiosyncratic to problem
		(\ref{def:SVI})}, while the random vector $z$ is artificially
		introduced.  Next, we provide some assumptions on
		these two random variables. 
		
\begin{assumption}[{\bf Random variables $\xi$ and $z$}]\label{assum:step_error_sub_2} Let the following hold:\\
(a) \  The random variables \us{$\xi_j \in \mathbb{R}^d$} 
are \an{independent and identically distributed}   for any $j \geq 0$.\\
(b) \ \uvs{The} random variables $z_i \in \mathbb{R}^n$ are 
independent and uniformly distributed in an 
$n$-dimensional ball with radius $\e_i$ centered at the origin for any $i \geq 0$.\\
(c) \  \an{The random variables $z_i$ and $\xi_j$ are independent} for any $i,j \geq 0$.
\end{assumption}
Based on this assumption, we may derive the following regarding the
conditional first and second moments of $w_k$.
\begin{lemma}[{\bf Conditional first and second moments of $w_k$}]\label{lemma:bound-on-errors} 
	Consider the 
		(\ref{algorithm:IRLSA}) scheme and suppose Assumptions
		\ref{assum:step_error_sub_1}(c) and \ref{assum:step_error_sub_2}
	hold. Then, the stochastic error $w_k$ satisfies the following
		relations for any $k \geq 0$: \[\EXP{w_k \mid \sF_k\cup
			\{z_k\}}=0 \quad  \hbox{ and } \quad\EXP{\|w_k\|^2\mid \sF_k\cup
				\{z_k\}} \leq C^2.\]
				{Furthermore, for any $k \geq 0$,
				\[\EXP{w_k \mid \sF_k}=0 \quad  \hbox{ and } \quad \EXP{\|w_k\|^2\mid \sF_k} \leq C^2.\]
				}
\end{lemma}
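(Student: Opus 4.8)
The plan is to establish the statements conditioned on the richer information set $\sF_k\cup\{z_k\}$ first, and then recover the $\sF_k$-conditioned statements via the tower property of conditional expectation. The key structural observation is that the iterate $x_k$ is a (deterministic) function of the history $\sF_k$, hence measurable with respect to $\sigma(\sF_k\cup\{z_k\})$; consequently the point $y\triangleq x_k+z_k$ at which the oracle is sampled is also measurable with respect to this $\sigma$-algebra. On the other hand, the sample $\xi_k$ is independent of $\sigma(\sF_k\cup\{z_k\})$: by Assumption~\ref{assum:step_error_sub_2}(a) it is independent of $\xi_0,\dots,\xi_{k-1}$, by Assumption~\ref{assum:step_error_sub_2}(c) it is independent of $z_1,\dots,z_k$, and it is independent of $x_0$ by construction of the scheme.

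For the first moment, I would invoke the standard substitution (freezing) property of conditional expectation: if $Y$ is $\mathcal{G}$-measurable and $\xi_k$ is independent of $\mathcal{G}$, then $\EXP{\Phi(Y,\xi_k)\mid \mathcal{G}}=h(Y)$ where $h(y)\triangleq\EXP{\Phi(y,\xi_k)}=F(y)$. Applying this with $\mathcal{G}=\sigma(\sF_k\cup\{z_k\})$ and $Y=x_k+z_k$ yields
\[
\EXP{\Phi(x_k+z_k,\xi_k)\mid \sF_k\cup\{z_k\}}=F(x_k+z_k),
\]
and since $w_k=\Phi(x_k+z_k,\xi_k)-F(x_k+z_k)$ with $F(x_k+z_k)$ itself $\sigma(\sF_k\cup\{z_k\})$-measurable, subtracting gives $\EXP{w_k\mid \sF_k\cup\{z_k\}}=0$.

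For the second moment, I would expand $\|w_k\|^2=\|\Phi(x_k+z_k,\xi_k)\|^2-2\,\Phi(x_k+z_k,\xi_k)^TF(x_k+z_k)+\|F(x_k+z_k)\|^2$ and take the conditional expectation; the cross term collapses to $-2\|F(x_k+z_k)\|^2$ by the first-moment identity, leaving
\[
\EXP{\|w_k\|^2\mid \sF_k\cup\{z_k\}}=\EXP{\|\Phi(x_k+z_k,\xi_k)\|^2\mid \sF_k\cup\{z_k\}}-\|F(x_k+z_k)\|^2.
\]
Here one must first check that the sampling point lies in the enlarged set: since $x_k\in X$ and $\|z_k\|\le\e_k\le\e$, we have $x_k+z_k\in X+B_n(0,\e)=X^\e$, so Assumption~\ref{assum:step_error_sub_1}(c) applies and the freezing property bounds the first term on the right by $C^2$. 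Dropping the nonnegative quantity $\|F(x_k+z_k)\|^2$ gives the desired bound $C^2$.

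Finally, the $\sF_k$-conditioned statements follow by the tower property together with the fact that $z_k$ is independent of $\sF_k$: $\EXP{w_k\mid \sF_k}=\EXP{\EXP{w_k\mid \sF_k\cup\{z_k\}}\mid \sF_k}=0$ and $\EXP{\|w_k\|^2\mid \sF_k}=\EXP{\EXP{\|w_k\|^2\mid \sF_k\cup\{z_k\}}\mid \sF_k}\le C^2$. I expect the only nontrivial point to be the rigorous justification of the substitution property---that one may \emph{freeze} the $\sigma(\sF_k\cup\{z_k\})$-measurable argument $x_k+z_k$ and evaluate the expectation over the independent $\xi_k$---which is precisely where the independence structure encoded in Assumption~\ref{assum:step_error_sub_2} is essential; the remaining steps are routine expansions.
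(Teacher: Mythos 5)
Your proposal is correct and follows essentially the same route as the paper's proof: condition on $\sF_k\cup\{z_k\}$, use the independence of $\xi_k$ from that $\sigma$-algebra to freeze the argument $x_k+z_k$ and obtain $\EXP{\Phi(x_k+z_k,\xi_k)\mid \sF_k\cup\{z_k\}}=F(x_k+z_k)$, expand $\|w_k\|^2$ so that the cross term collapses and the bound $\EXP{\|w_k\|^2\mid\sF_k\cup\{z_k\}}\le C^2-\|F(x_k+z_k)\|^2\le C^2$ follows from Assumption~\ref{assum:step_error_sub_1}(c), and then recover the $\sF_k$-conditioned statements by taking expectation over $z_k$ (the tower property). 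Your explicit justification of the substitution property and the check that $x_k+z_k\in X^\e$ are somewhat more careful than the paper's terse treatment, but the argument is the same.
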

\begin{proof} Let $k$ be a \uvs{a fixed nonnegative
	integer}. The definition of $w_k$ in (\ref{algorithm:IRLSA}) implies that
\begin{align*}
\EXP{w_{k} \mid \sF_k \cup \{z_k\}} =\EXP{\Phi(x_{k}+z_k,\xi_{k})\mid \sF_k\cup \{z_k\}}-F(x_k+z_k)= F(x_k+z_k)-F(x_k+z_k)= 0,
\end{align*} where we used the independence of $z_k$ and $\xi_k$.
{By taking expectations with respect to $z_k$, we obtain
$\EXP{w_k \mid \sF_k}=0.$}
{For the term $\EXP{\|w_k\|^2\mid \sF_k\cup
				\{z_k\}}$ using Assumption \ref{assum:step_error_sub_1}(c)}, we may write
\begin{align}\label{equ:-w_k-1}
& \ \quad \EXP{\|w_k\|^2\mid \sF_k\cup \{z_k\}}=\EXP{\|\Phi(x_k+z_k,\xi_k)- F(x_k+z_k)\|^2\mid \sF_k\cup \{z_k\}}\\
& = \underbrace{\EXP{\|\Phi(x_k+z_k,\xi_k)\|^2\mid \sF_k\cup \{z_k\}}}_{\hbox{Term } 1}
+{\|F(x_k+z_k)\|^2} \notag
-2{\underbrace{\EXP{\Phi(x_k+z_k,\xi_k)\mid \sF_k\cup \{z_k\}}^TF(x_k+z_k)}_{\hbox{Term } 2}}.
\end{align}
Using Assumption \ref{assum:step_error_sub_1}(c), we observe that Term $1\leq C^2$. {Furthermore, we have
\begin{align}\label{equ:-w_k-3}
\mbox{Term 2} =F(x_k+z_k)^TF(x_k+z_k)
=\|F(x_k+z_k)\|^2.
\end{align}
}
Therefore, from relations (\ref{equ:-w_k-1}) and (\ref{equ:-w_k-3}) we obtain 
\[{\EXP{\|w_k\|^2\mid \sF_k\cup \{z_k\}} \leq C^2 - \|F(x_k+z_k)\|^2 \leq C^2}.\]
{Taking expectation with respect to $z_k$ implies that $\EXP{\|w_k\|^2\mid \sF_k} \leq C^2.$}
\end{proof}

\section{Convergence analysis of \us{RSSA scheme}}\label{sec:a.s.}
\an{In this section, we provide some properties of the smoothed map (Section~\ref{sec:smoothed}). Then, we prove a.s. 
(Section~\ref{sec:31}) and mean-squared convergence
(Section~\ref{sec:mean}) of the RSSA scheme.}

\subsection{Smoothed map and its properties}\label{sec:smoothed}

\an{Our RSSA scheme uses} a family of approximate smoothed mappings defined as follows.  
\begin{definition}[{\bf Smoothed mapping}]\label{def:F_k}
Consider mapping $F:X^\e \rightarrow \mathbb{R}^n$. Let $z_k \in
\mathbb{R}^n$ be a uniform random vector in $B_n(0,\e_k)$ for all $k\geq
0$. The smoothed (approximate) mapping $F_k:X \rightarrow \mathbb{R}^n$ is defined by
\[F_k(x)=\EXP{F(x+z_k)}, \quad \hbox{for any } x \in X.\]
\end{definition}

\an{Note that $F_k$ is characterized
		by the random variable $z_k$  which is assumed to be uniformly
			distributed over the $\e_k$-ball centered at the origin. 
Thus, when $\e_k$ is small,  the mapping $F_k$ can be viewed as an approximation of
		the original mapping $F$.} 
			More precisely, the random variable $z_k$ has the following probability distribution function:
\begin{equation}\label{eqn:zuniform}
p{_u}(z_k) = \left\{\begin{array}{ll} \frac{1}{c_n \varepsilon_k^n}
&\hbox{for } z_k \in B_{n}(0,\e_k),\cr \hbox{} &\hbox{}\cr 0
&\hbox{otherwise,}\end{array}\right.
\end{equation}
where \an{$c_n$ is the volume of the unit ball in $\re^n$, i.e.,}
$c_{n} = \int_{B_n(0,1)}dy= \dfrac{\pi^\frac{n}{2}}{\Gamma(\frac{n}{2} + 1)}$,
	and $\Gamma$ is the gamma function defined by 
\begin{eqnarray}\label{def:gamma}
\Gamma\left(\frac{n}{2}+1\right)= \left\{ \begin{array}{ll}
\left(\frac{n}{2}\right)!, &\hbox{if $n$ is even,}\cr
\hbox{}&\hbox{}\cr \sqrt{\pi}\,\frac{n!!}{2^{(n+1)/2}}. &\hbox{if $n$
is odd.}
\end{array}\right.
\end{eqnarray}
\an{The advantage of smoothing is that, when the mapping $F$ is bounded over the set $X^\e$, 
then each approximate mapping $F_k$ is Lipschitz continuous, with Lipschitz constant depending on $\e_k$. 
When $F$ is not Lipschitz continuous, one may consider approximating $F$ with $F_k$, and thus take advantage of 
efficiently solving the variational inequality VI($X,F_k$) in order to find an approximate solution to 
VI($X,F$), and obtain a solution to VI($X,F_k$)  as $\e_k$ decreases to 0.
This can be done using either {\it two-loop or single-loop} schemes.

A two-loop scheme consists of an outer loop for iteratively changing  $\e_k$, 
and an inner loop for solving 
VI($X,F_k$) for a fixed value of $\e_k$. In \uvs{instances where} $F$ has a
	\uvs{closed-form expression} and the evaluation of the integral $F_k$ (possibly
			multi-dimensional) can be done efficiently, then VI$(X,F)$
	can be solved by employing an adaptive smoothing
	scheme~\cite{Polak84}.  
In our prior work~\cite{Farzad1,Farzad03}, we have used the adaptive smoothening to address nonsmoothness in 
stochastic convex optimization problems~\cite{Farzad1} and the absence of Lipschitzian properties on the maps 
in monotone SVIs~\cite{Farzad03}. Specifically, regarding the variational inequality problems, we developed
					   an adaptive smoothing SA scheme for the case  where
					   in the inner loop, the problem VI$(X,F_k)$ is
					   solved using a smoothing variant of SA method and
					   the asymptotic convergence of the scheme is shown
					   in an almost sure sense.  A challenge associated with the
	 adaptive smoothing approach is that in many settings, $F_k$ cannot be evaluated efficiently; this could be a
	consequence of the stochasticity of $F_k$. As a result, the inner
	loop of such a scheme requires conducting a simulation. In fact, 
	even if the subproblem was characterized by a tractable $F_k$, 
	the Lipschitz constant of $F_k$
is proportional to the inverse of $\e_k$, as we will show soon. Therefore, when $\e_k \to 0$, the
Lipschitz constant tends to $+\infty$, thus making the problems VI$(X,F_k)$ increasingly challenging to solve.

As a possible remedy for this situation, in this paper, we develop a single-loop approach where we change $\e_k$ at each iteration, thus changing the smoothed map $F_k$ at each step of the algorithm.
Furthermore, we also update the regularization parameter
			at every step. The parameters are  driven to
			zero at prescribed rates to ensure a.s. convergence of the
			produced iterate sequence (Theorem~\ref{prop:almost-sure}).}

Next, we prove the monotonicity and Lipschitz continuity of the smoothed
mapping. 

\begin{lemma}[{\bf Properties of {smoothed mapping}}]\label{lemma:F_k_monotone} 
Consider the smoothed mapping $F_k$ as given in Definition \ref{def:F_k}.
Then, the following hold:
\begin{itemize}
\item[(a)] {Let $\{x_t\}\subset X$ be a convergent sequence in $X$ i.e., such that 
$\lim_{t\to\infty} x_t=\hat x$ with $\hat x\in X$. Also, let $F$ be continuous on the set $X^\e$.
If Assumption~\ref{assum:step_error_sub_1}(c) holds
and $\e_t\to0$, then
\[ \lim_{t \to\infty} F_t(x_t)=F(\hat x).\] }
\item [(b)] 
Let Assumptions~\ref{assum:step_error_sub_1}(a) and~\ref{assum:step_error_sub_1}(c) hold. For any $k \geq 0$, 
the mapping $F_k$ is Lipschitz continuous over the set $X$ 
with the parameter $\kappa\frac{n!!}{(n-1)!!}\frac{C}{\epsilon_k}$, 
where $\kappa=1$ if $n$ is odd and $\kappa=\frac{2}{\pi}$ otherwise.
\item [(c)] 
If the mapping $F:{X^\e} \rightarrow \mathbb{R}^n$ is monotone over the set {$X^\e$}, then the mapping $F_k$ is monotone over the set $X$.
	\end{itemize}
\end{lemma}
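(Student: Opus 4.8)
The plan is to establish the three claims in order of increasing difficulty, disposing of the monotonicity claim (c) first, the continuity claim (a) next, and reserving the Lipschitz estimate (b) for last, since (b) carries essentially all of the content. Throughout I would work with the explicit averaged form of the smoothed map,
\[
F_k(x)=\frac{1}{\mathrm{vol}\big(B_n(0,\e_k)\big)}\int_{B_n(0,\e_k)}F(x+z)\,dz ,
\]
and repeatedly exploit that for $x\in X$ and $\|z\|\le\e_k\le\e$ one has $x+z\in X^\e$, so that the bound $\|F(x+z)\|\le C$ from Remark~\ref{rem:C-M} and the monotonicity/continuity of $F$ on $X^\e$ are all available at the shifted arguments. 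For (c), fix $x,x'\in X$. For every realization $z$ with $\|z\|\le\e_k$ we have $x+z,\,x'+z\in X^\e$ and $(x+z)-(x'+z)=x-x'$, so monotonicity of $F$ on $X^\e$ gives $(F(x+z)-F(x'+z))^T(x-x')\ge0$ pointwise in $z$; taking expectation over $z_k$ preserves the sign and yields $(F_k(x)-F_k(x'))^T(x-x')\ge0$, which is exactly monotonicity of $F_k$ on $X$.

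For (a), I would write $F_t(x_t)-F(\hat x)=\EXP{F(x_t+z_t)-F(\hat x)}$ and use compactness: since $X$ is closed and bounded (Assumption~\ref{assum:step_error_sub_1}(a)), $X^\e$ is compact and the continuous map $F$ is in fact uniformly continuous on $X^\e$. Given $\delta>0$, choose the modulus $\rho>0$ from uniform continuity; once $t$ is large enough that $\|x_t-\hat x\|+\e_t<\rho$, every realization satisfies $\|(x_t+z_t)-\hat x\|\le\|x_t-\hat x\|+\e_t<\rho$, whence $\|F(x_t+z_t)-F(\hat x)\|<\delta$ surely, and taking expectation gives $\|F_t(x_t)-F(\hat x)\|<\delta$. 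This proves the limit; the same conclusion follows by the substitution $z=\e_t u$ over the unit ball together with dominated convergence, the integrand being dominated by the constant $2C$ via Remark~\ref{rem:C-M}.

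The real work is (b), and the boundary-integral representation is the main obstacle. I would bound the increment $F_k(x)-F_k(x')$ by rewriting each term as an integral of $F$ over the shifted ball $B_n(x,\e_k)$, respectively $B_n(x',\e_k)$, so that their difference is supported on the symmetric difference of the two balls; bounding $\|F\|\le C$ then gives
\[
\|F_k(x)-F_k(x')\|\le \frac{C}{\mathrm{vol}(B_n(0,\e_k))}\,
\mathrm{vol}\!\big(B_n(x,\e_k)\,\triangle\,B_n(x',\e_k)\big).
\]
The crux is to show that this symmetric-difference volume is at most $\|x-x'\|$ times the surface integral $\int_{\partial B_n(0,\e_k)}|\nu_1|\,dS$, with $\nu$ the outward unit normal; equivalently, to realize the Jacobian of $F_k$ through the divergence theorem as the boundary integral $\tfrac{1}{\mathrm{vol}(B_n(0,\e_k))}\int_{\partial B_n(0,\e_k)}F(x+z)\,\nu^T\,dS$, a distributional-gradient identity that is valid even though $F$ is merely continuous. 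Either route reduces the Lipschitz constant to $\tfrac{C}{\mathrm{vol}(B_n(0,\e_k))}\int_{\partial B_n(0,\e_k)}|\nu_1|\,dS$, after which the only remaining task is the explicit evaluation of this ratio. Using $\mathrm{vol}(B_n(0,\e_k))=\pi^{n/2}\e_k^{\,n}/\Gamma(\tfrac n2+1)$ and $\int_{\partial B_n(0,\e_k)}|\nu_1|\,dS=\e_k^{\,n-1}\,2\pi^{(n-1)/2}/\Gamma(\tfrac{n+1}2)$, the constant collapses to $\tfrac{2\Gamma(n/2+1)}{\sqrt\pi\,\Gamma((n+1)/2)}\cdot\tfrac{C}{\e_k}$, which the Legendre duplication formula rewrites as $\kappa\,\tfrac{n!!}{(n-1)!!}\,\tfrac{C}{\e_k}$, splitting into the odd-$n$ case ($\kappa=1$) and the even-$n$ case ($\kappa=2/\pi$). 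The two delicate points are the justification of the boundary-integral form \emph{without} assuming $F$ differentiable — best handled via the symmetric-difference estimate, noting that the marginal growth of that volume is maximized at zero separation, so the chord bound holds for all $\|x-x'\|$ — and the bookkeeping that converts the Gamma-function ratio into the stated double-factorial constant.
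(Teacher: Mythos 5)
Your proposal is correct, and it overlaps with the paper's proof in skeleton while being genuinely more self-contained at the one point where the paper is not. Part (c) is verbatim the paper's argument (pointwise monotonicity at the shifted arguments $x+z_k,\,y+z_k\in X^\e$, then expectation over $z_k$). For part (a), the paper substitutes $y=z/\e_t$ and invokes Lebesgue's dominated convergence theorem with the dominating constant from Remark~\ref{rem:C-M}; your primary route via uniform continuity of $F$ on the compact set $X^\e$ is more elementary but quietly uses Assumption~\ref{assum:step_error_sub_1}(a), which part (a) of the lemma does not formally list among its hypotheses --- harmless here since you also supply the dominated-convergence argument, which is exactly the paper's and needs only the bound $C$ and pointwise continuity. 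The real divergence is in part (b): the paper changes variables to reach $\|F_k(x)-F_k(y)\|\le C\int_{\Real^n}|p_u(v-x)-p_u(v-y)|\,dv$ and then defers the evaluation of this total-variation integral to Lemma~8 of an earlier reference, whereas you evaluate it directly, observing that it equals $C/(c_n\e_k^n)$ times the volume of the symmetric difference $B_n(x,\e_k)\,\triangle\,B_n(y,\e_k)$, and proving the chord bound by concavity: the intersection volume has derivative $-c_{n-1}\bigl(\e_k^2-h^2/4\bigr)^{(n-1)/2}$ in the separation $h$, so the symmetric-difference volume is concave on $[0,2\e_k]$ and constant beyond, hence lies below its tangent $2c_{n-1}\e_k^{n-1}h$ at $h=0$ for \emph{all} $h$ --- this is exactly the point your "marginal growth is maximized at zero separation" remark needs, and it closes the case $h>2\e_k$ without a separate check. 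Your Gamma-function bookkeeping ($2c_{n-1}/c_n=2\Gamma(n/2+1)/(\sqrt{\pi}\,\Gamma((n+1)/2))$, reduced via duplication to $\kappa\,n!!/(n-1)!!$ with $\kappa=1$ for odd $n$ and $\kappa=2/\pi$ for even $n$) checks out in both parities. What each approach buys: the paper's proof is shorter but opaque at the citation; yours reconstructs the cited geometric lemma from scratch, and you correctly flag that the divergence-theorem framing of the same constant would require a distributional-gradient justification for merely continuous $F$, which the symmetric-difference route avoids entirely.
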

 \begin{proof}
 {Using the definition of $F_t$ and letting $c_n$ be the volume of the $n$-dimensional unit ball, i.e., $c_n=\int_{\|y\| \leq1}dy$, we have}
\begin{align*}
\lim_{t\to\infty} \EXP{F(x_t+z_{t})}
= \lim_{t \to \infty}\int_{\|z\| \leq \e_{t}} F(x_{t}+z) \frac{1}{c_n\e_{t}^n}dz.
\end{align*}
By change of variables, \fyRev{$y=\frac{z}{\e_t}$}, it follows that 
\begin{align*}
\lim_{t\to\infty} \EXP{F(x_t+z_{t})}= \frac{1}{c_n}\lim_{t\to \infty}\int_{\|y\| \leq1} F(x_{t}+\e_t y) dy.
\end{align*}
{By Assumption~\ref{assum:step_error_sub_1}(c) we have that $\|F(x+z)\| \leq C$ (see Remark~\ref{rem:C-M}), 
implying that $F(x+z)$ is integrably bounded with respect to the distribution defining the random
variable $z$. Thus, by appealing to Lebesgue's dominated
convergence theorem, we interchange the limit and the integral leading to the following relations:
\[\lim_{t\to\infty} \EXP{F(x_t+z_{t})} = \frac{1}{c_n}\int_{\|y\| \leq1} \lim_{t\to \infty}F(x_{t}+\e_t y) dy
=\frac{1}{c_n}\int_{\|y\| \leq1} F(\hat x)dy,\]
where the last equality follows by the
continuity of the mapping $F$, and $x_t\to\hat  x$, $\e_t\to0$.
Finally, we may conclude that the above integral reduces to $F(\hat s)$
by invoking the definition of $c_n$ as the volume of $B_n(0,1)$.}

\noindent
(b)\  Let $p_u$ denote the probability density function of the random vector $z$ and suppose  $k \geq 0$ is fixed. From the definition of $F_k$, for any $x,y\in X$, 
\[ \| F_k(x) -F_k(y)\| =\left\|\int_{\Real^n}F(x+z_k)p{_u}(z_k)dz_k-\int_{\Real^n}F(y+z_k)p{_u}(z_k)dz_k\right\|. \]
By changing the integral variable in the preceding relation, we obtain
\begin{align}\label{ineq:lips-bound-1}
\|F_k(x) - F_k(y)\| &  =\left\|\int_{\Real^n}(p{_u}(v -x)-p_u(v -y))F(v)dv\right\|\cr
& \leq  \int_{\Real^n}|p_u(v -x)-p_u(v -y)|\|F(v)\| dv\leq  C \int_{\Real^n}|p_u(v -x)-p_u(v -y)|dv,
\end{align}
where the \fyRev{first} inequality follows from Jensen's inequality and
the \uvs{second} inequality is a consequence of boundedness of the
mapping $F$ over $X^\e$. The remainder of the proof is similar to
\uvs{that of}\an{\footnote{In \cite{Farzad1}, we have considered a convex nondifferentiable optimization 
problem, where the smoothening was effectively applied to the subdifferental set of the objective function. A part of that proof applies here.} } 
Lemma~$8$ of~\cite{Farzad1}. 

\noindent(c)\ 
 Since $F$ is monotone over {$X^\e$} we have that 
 \[(a-b)^T(F(a)-F(b))\geq 0,\quad \hbox{for all } a,b \in {X^\e}.\]
 Therefore, for choice of \uvs{$ a = x+z_k$ and $b= y+z_k$} in $X^\e$, we have that 
 \[((x+z_k)-(y+z_k))^T(F(x+z_k)-F(y+z_k))\geq 0,\quad \hbox{for all } x,y \in X.\]
It follows that 
 \[(x-y)^T(F(x+z_k)-F(y+z_k))\geq 0,\quad \hbox{for all } x,y \in X.\]
 Taking expectations on both sides of the preceding relation, the monotonicity of $F_k$ follows from
 \[(x-y)^T( F_k(x)- F_k(y))\geq 0,\quad \hbox{for all } x,y \in X.\]
 \end{proof}
 
\begin{remark} 
Note that $\frac{n!!}{(n-1)!!}$ in Lemma~\ref{lemma:F_k_monotone}(b)
 is of the order $\sqrt{n}$. Lemma~\ref{lemma:F_k_monotone}(c) implies that the mapping $F_k+\eta_k\bf{I}$
is strongly monotone for any $\eta_k>0$. 
In view of Lemma~\ref{lemma:F_k_monotone}(c), when $X$ is closed and
convex, Theorem $2.3.3$(b) of~\cite{facchinei02finite}, page 156,  ensures that VI$(X,F_k+\eta_k\bf{I})$ has a
unique solution.  \end{remark}

\subsection{Almost sure convergence}\label{sec:31}
\an{Our main result in this section shows that the sequence produced by the RSSA
scheme converges to $X^*$ in an almost sure sense for suitably chosen stepsize values and other parameters.
The proof idea is to consider the (Tikhonov) sequence $\{s_k\}$ where $s_k$ is the solution
to the regularized smoothed approximation VI$(X,F_k+\eta_k\bf{I})$ of the
original SVI. Then, knowing the behavior of the sequence $\{s_k\}$ and a relation for the consecutive differences 
$\|s_{k+1}-s_k\|$,
we can show that $\|x_{k+1}-s_k\| \to 0$ under suitable conditions on the stepsizes, the regularization and the smoothening parameters.}

\begin{definition}[{\bf Solution of the smoothed regularized problem}]\label{def:s_k}
For a given iteration index $k$, let $s_k$ be the unique solution of VI$(X,F_k+\eta_k\bf{I})$, where 
$F_k:X \rightarrow \mathbb{R}^n$ is given by Definition \ref{def:F_k} and 
$\eta_k>0$ is the regularization parameter. 
\end{definition}

%
%
	We now present a bound on $\|s_k-s_{k-1}\|$
		and proceed to prove that the sequence $\{s_k\}$ of approximate
		solutions has accumulation points in the set $X^*$.
\begin{proposition}[{\bf Convergence of $\{s_k\}$}]\label{prop:sk_estimate} 
Suppose Assumptions \ref{assum:step_error_sub_1} and \ref{assum:step_error_sub_2} hold. 
Consider the sequence $\{s_k\}$ of solutions $s_k$ given in Definition~\ref{def:s_k}. Then, \\
(a) \ For any $k\geq1$, $$\|s_k -s_{k-1}\| \leq \frac{2nC}{\eta_{k-1}}\left(1-\frac{\min\{\e_k,\e_{k-1}\}}{\max\{\e_k,\e_{k-1}\}} \right)+M\left
|1-\frac{\eta_k}{\eta_{k-1}} \right|,$$ where $M$ and $C$ are the
	bounds on $X$ and $F$  (see Remark~\ref{rem:C-M} and  Assumption~\ref{assum:step_error_sub_1}(c), respectively).\\
(b) \  
Suppose that the sequences $\{\eta_k\}$ and $\{\e_k\}$ tend to zero,
i.e.,  $\lim_{k\to\infty}\e_k=0$ and $\lim_{k\to0}\eta_k=0.$
Then, we have the following:\\
(1) \
$\{s_k\}$ has an accumulation point and 
every accumulation point of $\{s_k\}$ is a solution to VI$(X,F)$;\\
(2) \an{Let $t^*$ be the smallest norm solution to VI$(X,F)$.}
If $\lim_{k\to\infty}\frac{\e_k} {\eta_k}=0$ and $F$ is
differentiable at $t^*$ with a bounded Jacobian in a neighborhood of
$t^*$, then $\{s_k\}$ converges to \an{$t^*$}.
 \end{proposition}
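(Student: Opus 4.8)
The plan is to handle the three claims in turn, with part (b)(2) being the crux. For part (a), I would exploit that each $s_k$ solves a \emph{strongly} monotone VI. Writing the defining inequalities for $s_k$ and $s_{k-1}$,
\[(x-s_k)^T(F_k(s_k)+\eta_k s_k)\ge 0, \qquad (x-s_{k-1})^T(F_{k-1}(s_{k-1})+\eta_{k-1}s_{k-1})\ge 0,\]
I would test the first at $x=s_{k-1}$ and the second at $x=s_k$, and combine them using the strong monotonicity of $F_{k-1}+\eta_{k-1}\mathbf{I}$ (modulus $\eta_{k-1}$, valid since $F_{k-1}$ is monotone by Lemma~\ref{lemma:F_k_monotone}(c)). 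After a Cauchy--Schwarz step and division by $\|s_k-s_{k-1}\|$ (the case $s_k=s_{k-1}$ being trivial) this yields
\[\eta_{k-1}\|s_k-s_{k-1}\|\le \|F_k(s_k)-F_{k-1}(s_k)\|+|\eta_k-\eta_{k-1}|\,\|s_k\|,\]
after which dividing by $\eta_{k-1}$ and using $\|s_k\|\le M$ produces the second term $M\,|1-\eta_k/\eta_{k-1}|$. The only remaining work is to bound $\|F_k(s_k)-F_{k-1}(s_k)\|$, which I would do by writing both smoothed maps as integrals against the uniform densities $p_k,p_{k-1}$ on $B_n(0,\e_k)$ and $B_n(0,\e_{k-1})$, so that $\|F_k(x)-F_{k-1}(x)\|\le C\int|p_k-p_{k-1}|$ by Remark~\ref{rem:C-M}. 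Assuming without loss of generality $\e_{k-1}\le\e_k$, a direct computation of this total-variation integral gives $2\big(1-(\e_{k-1}/\e_k)^n\big)$, and the elementary bound $1-\rho^n\le n(1-\rho)$ (with $\rho=\e_{k-1}/\e_k$) converts this into the factor $2nC\left(1-\frac{\min\{\e_k,\e_{k-1}\}}{\max\{\e_k,\e_{k-1}\}}\right)$, which after division by $\eta_{k-1}$ is exactly the stated estimate.

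For part (b)(1), compactness of $X$ (Assumption~\ref{assum:step_error_sub_1}(a)) immediately gives a convergent subsequence $s_{k_j}\to\hat s\in X$, establishing the existence of an accumulation point. To identify $\hat s$ as a solution, I would pass to the limit in the defining inequality $(x-s_{k_j})^T(F_{k_j}(s_{k_j})+\eta_{k_j}s_{k_j})\ge 0$: the regularization term vanishes since $\eta_{k_j}\to0$ and $\|s_{k_j}\|\le M$, while $F_{k_j}(s_{k_j})\to F(\hat s)$ by Lemma~\ref{lemma:F_k_monotone}(a), applied to the subsequence with $s_{k_j}\to\hat s$ and $\e_{k_j}\to0$. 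This produces $(x-\hat s)^TF(\hat s)\ge0$ for all $x\in X$, i.e. $\hat s\in X^*$.

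Part (b)(2) is the main obstacle, since here I must pin down the \emph{specific} accumulation point. The idea is a Tikhonov-type argument performed directly against the least-norm solution $t^*$. Testing the VI for $s_k$ at $x=t^*$, using monotonicity of $F_k$ to replace $F_k(s_k)$ by $F_k(t^*)$, and the identity $(s_k-t^*)^Ts_k=\tfrac12(\|s_k\|^2+\|s_k-t^*\|^2-\|t^*\|^2)$, I expect to reach
\[\tfrac{\eta_k}{2}\big(\|s_k\|^2+\|s_k-t^*\|^2-\|t^*\|^2\big)\le (t^*-s_k)^TF_k(t^*).\]
Since $t^*\in X^*$ gives $(t^*-s_k)^TF(t^*)\le0$, the right-hand side is at most $\|t^*-s_k\|\,\|F_k(t^*)-F(t^*)\|\le 2M\,\delta_k$ with $\delta_k=\|F_k(t^*)-F(t^*)\|$, whence
\[\|s_k\|^2+\|s_k-t^*\|^2\le \|t^*\|^2+\tfrac{4M\delta_k}{\eta_k}.\]
The decisive point is controlling $\delta_k/\eta_k$: here I would invoke the bounded-Jacobian hypothesis, which makes $F$ Lipschitz (constant $L$) near $t^*$, so that for $k$ large $\delta_k\le L\e_k$ and hence $\delta_k/\eta_k\le L\,\e_k/\eta_k\to0$ by the assumption $\e_k/\eta_k\to0$. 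Taking limits along any subsequence $s_{k_j}\to\hat s$ then yields $\|\hat s\|^2+\|\hat s-t^*\|^2\le\|t^*\|^2$; combined with $\hat s\in X^*$ from part (b)(1) and the least-norm property $\|\hat s\|\ge\|t^*\|$, this forces $\hat s=t^*$. As every accumulation point of the bounded sequence equals $t^*$, we conclude $s_k\to t^*$. I expect the only genuinely subtle steps to be the ordering convention for $\e_k,\e_{k-1}$ in (a) and, in (b)(2), verifying that the smoothing error is truly $O(\e_k)$ rather than merely $o(1)$ --- which is precisely why both the bounded-Jacobian condition and the ratio condition $\e_k/\eta_k\to0$ are imposed.
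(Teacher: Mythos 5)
Your proof is correct, and parts (a) and (b)(1) follow the paper's own argument essentially verbatim: the same pairing of the two defining VI inequalities with monotonicity of $F_{k-1}$ to reach $\eta_{k-1}\|s_k-s_{k-1}\|\leq \|F_k(s_k)-F_{k-1}(s_k)\|+M|\eta_{k-1}-\eta_k|$, the same evaluation of the smoothing discrepancy (your total-variation integral $C\int|p_k-p_{k-1}|=2C\bigl(1-(\min\{\e_k,\e_{k-1}\}/\max\{\e_k,\e_{k-1}\})^n\bigr)$ is exactly the paper's explicit domain-splitting computation, followed by the same factorization $1-\rho^n\leq n(1-\rho)$), and the same compactness-plus-Lemma~\ref{lemma:F_k_monotone}(a) limit passage for (b)(1).

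Where you genuinely depart from the paper is the key step of (b)(2). The paper, after dropping the monotone term, divides through by $\eta_k\|s_k\|$ to obtain $\|s_k\|\leq\|t^*\|+\eta_k^{-1}\bigl(1+\tfrac{M}{d}\bigr)\|F(t^*)-F_k(t^*)\|$, which forces it to assume ``without loss of generality $0\notin X$'' so that $\|s_k\|\geq d>0$ --- an auxiliary device that is awkward and only loosely justified there. Your route via the polarization identity $(s_k-t^*)^Ts_k=\tfrac12\bigl(\|s_k\|^2+\|s_k-t^*\|^2-\|t^*\|^2\bigr)$ avoids any division by $\|s_k\|$ and yields the stronger relation $\|s_k\|^2+\|s_k-t^*\|^2\leq\|t^*\|^2+4M\delta_k/\eta_k$, from which any accumulation point $\hat s$ satisfies $\|\hat s\|^2+\|\hat s-t^*\|^2\leq\|t^*\|^2$; combined with $\hat s\in X^*$ and the least-norm property this pins $\hat s=t^*$ in one stroke. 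The treatment of the smoothing error is then identical in spirit to the paper's (mean-value/local-Lipschitz bound $\delta_k\leq L\e_k$ for large $k$, killed by $\e_k/\eta_k\to0$). In short: same Tikhonov-comparison skeleton, but your handling of the critical inequality is cleaner, removes the paper's extraneous $0\notin X$ hypothesis, and even gives a quantitative distance estimate $\|s_k-t^*\|^2\leq 4M\delta_k/\eta_k+\|t^*\|^2-\|s_k\|^2$ that the paper's argument does not produce.
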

\begin{proof}
(a) \ Suppose $k \geq 1$ is fixed. Since \an{$s_k$ and $s_{k-1}$
are the solutions to VI$(X, F_k+\eta_k\textbf{I})$ and VI$(X, F_{k-1}+\eta_{k-1}\textbf{I})$, respectively,
it follows that}
\[(s_{k-1}-s_k)^T (F_k(s_k)+\eta_ks_k) \geq 0 \hbox{ and }(s_k-s_{k-1})^T (F_{k-1}(s_{k-1})+\eta_{k-1}s_{k-1}) \geq 0.\]
Adding the preceding relations, yields $(s_{k-1}-s_k)^T( F_k(s_k)- F_{k-1}(s_{{k-1}})+\eta_ks_k-\eta_{k-1}s_{k-1}) \geq 0.$ 
By adding  and subtracting ${(s_{k-1}-s_k)^T \left(F_{k-1}(s_k)+\eta_{k-1}s_k\right)}$, we obtain that 
 \begin{align*}
 &(s_{k-1}- s_k)^T ( F_k(s_k) -  F_{k-1}(s_{k}))  +  (s_{k-1}- s_k)^T(F_{k-1}(s_{k}) 
 -  F_{k-1}(s_{k-1}))\cr&+(\eta_k-\eta_{k-1})(s_{k-1}- s_k)^Ts_k-\eta_{k-1}\|s_k-s_{k-1}\|^2 \geq 0.
 \end{align*}
By the monotonicity of $F_{k-1}$, it follows that $(s_{k-1}- s_k)^T(F_{k-1}(s_{k}) 
 -  F_{k-1}(s_{k-1}))\le0$, thus implying
\[(s_{k-1}- s_k)^T( F_k(s_k) -  F_{k-1}(s_{k}))+(\eta_k-\eta_{k-1})(s_{k-1}- s_k)^Ts_k
\ge \eta_{k-1}\|s_k-s_{k-1}\|^2. \]
By the Cauchy-Schwartz inequality and by recalling that $\|s_k\| \leq M$ {(see Remark~\ref{rem:C-M})}, 
we obtain
\begin{align}\label{smooth:sub-bound_1}
 \eta_{k-1}\|s_k-s_{k-1}\| \leq \| F_k(s_{k}) - F_{k-1}(s_{k})\|+M|\eta_{k-1}-\eta_k|.
 \end{align}

Let $p_u$ denote the density function of the random vector
	 \fyRev{$z_k$ uniformly distributed over the ball $B_n(0,\e_k)$ given by equation~\eqref{eqn:zuniform}.}
					 To estimate the term $\|
			 F_k(s_{k}) - F_{k-1}(s_{k})\|$, we consider two cases
				 based on whether $\epsilon_k$ is less than
					 $\epsilon_{k-1}$ or not.\\
\noindent (i) \ 
($\epsilon_k \leq \epsilon_{k-1}$): We begin by showing that $\|F_k(s_{k}) - F_{k-1}(s_{k})\|$ can be expressed as follows:
\begin{align*}
  & \quad \ \|F_k(s_{k}) -  F_{k-1}(s_{k})\|=\left\| \int_{\mathbb{R}^n}F(s_{k}+z_k)p_u(z_k)dz_k - \int_{\mathbb{R}^n}F(s_{k}+z_{k-1})p_u(z_{k-1})dz_{k-1}\right\| \cr
 &=\left\| \int_{\|z\|< \e_k}\frac{F(s_{k}+z)}{c_n\e_k^n}dz - \int_{\|z\|< \e_{k-1}}\frac{F(s_{k}+z)}{c_n\e_{k-1}^n}dz\right\| \cr
 &=\left\| \int_{\|z\|< \e_k}\frac{F(s_{k}+z)}{c_n\e_k^n}dz -
 \left(\int_{\|z\|<
		 \e_{k}}\frac{F(s_{k}+z)}{c_n\e_{k-1}^n}dz+\int_{\e_k
		 \leq\|z\|<
		 \e_{k-1}}\frac{F(s_{k}+z)}{c_n\e_{k-1}^n}dz\right)\right\|. 
\end{align*}
where in the third equality, we note that $\{z \in \mathbb{R}^n
\mid \|z\|< \epsilon_{k-1}\}=\{z \in \mathbb{R}^n \mid \|z\|<
\epsilon_{k}\}\cup \{z \in \mathbb{R}^n \mid \epsilon_{k} \leq \|z\|<
\epsilon_{k-1}\}$ when $\e_k\leq \e_{k-1}$.
The right hand side may be further bounded as follows:
\begin{align*} & \quad \left\| \int_{\|z\|< \e_k}\frac{F(s_{k}+z)}{c_n\e_k^n}dz - \left(\int_{\|z\|< \e_{k}}\frac{F(s_{k}+z)}{c_n\e_{k-1}^n}dz+\int_{\e_k \leq\|z\|< \e_{k-1}}\frac{F(s_{k}+z)}{c_n\e_{k-1}^n}dz\right)\right\| \cr
 &\leq  \left\| \int_{\|z\|< \e_k}F(s_{k}+z)\left(\frac{1}{c_n\e_k^n}-\frac{1}{c_n\e_{k-1}^n}\right)dz\right\| +\left\|\int_{\e_k \leq \|z\|< \e_{k-1}}\frac{F(s_{k}+z)}{c_n\e_{k-1}^n}dz\right\| \cr
  &\leq  \int_{\|z\|< \e_k}\|F(s_{k}+z)\|\left|\frac{1}{c_n\e_k^n}-\frac{1}{c_n\e_{k-1}^n}\right|dz +\int_{\e_k \leq \|z\|< \e_{k-1}}\frac{\|F(s_{k}+z)\|}{c_n\e_{k-1}^n}dz,
 \end{align*}
where in  the last two inequalities, we use the
triangle inequality and Jensen's inequality respectively.
Invoking relation~\eqref{ineq:F-C}, we obtain
  \begin{align*}
 \| F_k(s_{k}) -  F_{k-1}(s_{k})\| & \leq  C\int_{\|z\|< \e_k}\left|\frac{1}{c_n\e_k^n}-\frac{1}{c_n\e_{k-1}^n}\right|dz +C\int_{\e_k \leq \|z\|< \e_{k-1}}\frac{1}{c_n\e_{k-1}^n}dz \cr
 &=
 C(c_n\e_k^n)\left(\frac{1}{c_n\e_k^n}-\frac{1}{c_n\e_{k-1}^n}\right)+C(c_n\e_{k-1}^n-c_n\e_k^n)\frac{1}{c_n\e_{k-1}^n}
 \cr & =2C\left(1-\left(\frac{\e_k}{\e_{k-1}}\right)^n\right).
 \end{align*}
 Now, using relation (\ref{smooth:sub-bound_1}), we obtain
 \begin{align}\label{ineq:smooth_bound_2}\|s_k -s_{k-1}\| \leq \frac{2C}{\eta_{k-1}}\left(1-\left(\frac{\e_k}{\e_{k-1}}\right)^n \right)+M\left|1-\frac{\eta_k}{\eta_{k-1}} \right|.\end{align}
Since we assumed that $\e_{k} \leq \e_{k-1}$, we may write
\begin{align}\label{ineq:smooth_e_k}
1-\left(\frac{\e_k}{\e_{k-1}}\right)^n  = \left(1-\frac{\e_k}{\e_{k-1}}\right)\left(1+ \left(\frac{\e_k}{\e_{k-1}}\right)+\ldots+\left(\frac{\e_k}{\e_{k-1}}\right)^{n-1}\right)\leq n\left(1-\frac{\e_k}{\e_{k-1}}\right).
\end{align}
Therefore when $\e_{k} \leq \e_{k-1}$, from (\ref{ineq:smooth_e_k}) and (\ref{ineq:smooth_bound_2}), \fy{the desired inequality} holds for all $k\geq 1$. \\
\noindent (ii)\ ($\epsilon_k \geq \epsilon_{k-1}$):
Now, suppose $\e_{k} \geq \e_{k-1}$. Following the similar steps
above, one \us{may note that} that if $\e_{k} \geq \e_{k-1}$, then $\|
F_k(s_{k}) -  F_{k-1}(s_{k})\| \leq 2C\left(1-({\e_{k-1}}\slash{\e_k})^n\right)$. 

Therefore, \us{the desired equality follows by combining cases (i) and
	(ii) to obtain the following bound: $$\| F_k(s_{k}) -  F_{k-1}(s_{k})\| \leq
2nC\left(1-\frac{\min(\e_{k-1},\e_k)}{\max(\e_{k-1},\e_k)}\right).$$}  \\
(b) We begin by considering \an{part} (1).  
{By Definition~\ref{def:s_k}, the vector $s_k$ is the solution of
VI$(X,F_k+\eta_k\mathbf{I})$ indicating that  for all $k\ge0$,
\begin{align}\label{equ:VI-sk}
(x-s_k)^T\left(F_k(s_k)+\eta_ks_k\right) \geq 0, \quad \hbox{for all } x \in X,
\end{align}
with $s_k \in X$. Furthermore, by
Assumption \ref{assum:step_error_sub_1}(a), the set $X$ is bounded and,
therefore,} $\{s_k\}$ is bounded and has at least one accumulation point. 
Let $\hat s$ denote an arbitrary accumulation point of the
sequence $\{s_k\}$, i.e. $\lim_{i \rightarrow \infty} s_{k_i}=\hat s$. Observe that by Lemma~\ref{lemma:F_k_monotone}(a), it follows that 
the limit $\lim_{i \rightarrow
		\infty}F_{k_i}(s_{k_i})$ exists, namely
\[\lim_{i \rightarrow
		\infty}F_{k_i}(s_{k_i})=F(\hat s).\]	
Thus, by taking the limit along the subsequence $\{k_i\}$ in relation \eqref{equ:VI-sk} and using $\e_k\to0$, for
any $x \in X $ we obtain
\begin{align*}
 \qquad (x-\lim_{i \rightarrow \infty}s_{k_i})^T\left(\lim_{i \rightarrow
		\infty}F_{k_i}(s_{k_i})+\lim_{i \rightarrow
		\infty}\eta_{k_i}\lim_{i \rightarrow \infty}s_{k_i}\right) \geq
0 \qquad 
 \implies (x-\hat s)^T F(\hat s) &\geq 0,
\end{align*}
showing that $\hat s$ is a solution to VI$(X,F)$. 
Thus, all accumulation points of $\{s_k\}$ are solutions to VI$(X,F)$, which proves the statement in part (1).

We now consider (2) of (b) where we show that \fyRev{$\lim_{k \to
	\infty} s_k=t^*$ \uvs{where} $t^* \in X^*$}.  Therefore, \begin{align}\label{equ:VI-tk}
(x-t^*)^TF(t^*)\geq 0, \quad \hbox{for any } x \in X.
\end{align}
Furthermore, we have
\begin{align}\label{equ:VI-sk2}
(x-s_k)^T(F_k(s_k)+\eta_ks_k)\geq 0, \quad \hbox{for any } x \in X.
\end{align}
Replacing $x$ by $s_k$ in (\ref{equ:VI-tk}) and replacing $x$ by $t^*$ in (\ref{equ:VI-sk2}) and then summing the resulting inequalities we obtain
\begin{align*} &(s_k-t^*)^T(F(t^*)-F_k(s_k)-\eta_ks_k) \geq 0\cr
\implies \quad  & \eta_k\|s_k\|^2 \leq \eta_ks_k^Tt^*+(s_k-t^*)^T(F(t^*)-F_k(s_k)) \cr 
\implies \quad  & \eta_k\|s_k\|^2 \leq \eta_ks_k^Tt^*+(s_k-t^*)^T(F(t^*)-F_k(t^*))+(s_k-t^*)^T(F_k(t^*)-F_k(s_k)).
\end{align*}
\us{By observing the nonpositivity of the third term on the right from
	the monotonicity of $F_k$ and by  using the Cauchy-Schwartz
		inequality,} we have the following:
\begin{align*}
&\eta_k\|s_k\|^2 \leq \eta_k\|s_k\|\|t^*\|+\|s_k-t^*\|\|F(t^*)-F_k(t^*)\| \cr 
\implies \quad & \|s_k\| \leq \|t^*\|+\eta_k^{-1}\left(\frac{\|s_k\|+\|t^*\|}{\|s_k\|}\right)\|F(t^*)-F_k(t^*)\|.
 \end{align*}
\us{\fyRev{Without loss of generality, we may assume  that  $0 \notin X$ (In the case that $0 \in X$, since $X$ is bounded, we can reformulate the problem VI$(X,F)$ by changing the variable $x$ and defining $v = x +2\left(\max_{x\in X} \|x\|\right) \mathbf{1_n}$ where $\mathbf{1}_n$ is the unit vector. Note that $\|v\| \neq 0$ for all $x \in X$). Therefore, since we assume $0 \notin X$, there exists $d>0$ such that $\|s_k\|
		\geq d$.} Since $X$ is bounded, then $\|t^*\| \leq M$ where
			$M$ is the bound on set $X$.} 
This implies \us{the following:} 
\begin{align}\label{ineq:s_k-least-norm}
  \|s_k\|& \leq \|t^*\| +
  \eta_k^{-1}\left(1+\frac{M}{d}\right)\left\|\int(F(t^*+z_k)-F(t^*))p_u(z_k)dz_k\right\|
  \cr &\leq \|t^*\| +\eta_k^{-1}
  \left(1+\frac{M}{d}\right)\int_{\|\fyRev{z_k}\|\leq \e_k} \|F(t^*+z_k)-F(t^*)\|
  p_u(z_k)\fyRev{dz_k} \cr &\leq  \|t^*\| +\eta_k^{-1} \left(1+\frac{M}{d}\right)\sup_{\|\an{z_k}\|\leq \e_k}\|F(t^*+z_k)-F(t^*)\|.
 \end{align}
Let $J_F$ denote the Jacobian of $F$. \us{By} assumption, there exists a
$\rho>0$ where $\|J_F(x)\| \leq J_{ub}$ for any $x \in B(t^*,\rho)$.
Using the \us{mean value} theorem,
\begin{align}\label{mean-valued}
F(x+\delta)-F(x)=\left(\int_{0}^1 J_F(x+\tau\delta)d\tau \right)\delta, \quad \hbox{for any } \|\delta\| \leq \rho.
\end{align}
Assume that $K$ is a large number such that for any $k>K$, $\e_k<\rho$. Using the boundedness of $J_F$ and the triangle inequality, from (\ref{mean-valued}) we obtain 
\[\|F(t^*+z_k)-F(t^*)\| \leq J_{ub} \|z_k\| \leq J_{ub}\e_k.\]
\fyRev{Note that it is assumed that $\lim_{k\to \infty} \e_k/\eta_k =0.$}
  \us{By the preceding
	inequality, relation (\ref{ineq:s_k-least-norm}) and
		\fyRev{$\lim_{k\to \infty} \e_k/\eta_k =0$, we conclude that
			\uvs{every}
		subsequence of $s_k$}, denoted by $\{s_{k_i}\}$, has a limit
		point $\bar s$ such that $\|\bar s \| \leq \|t^*\|$. But from
		Prop. \ref{prop:sk_estimate} (b), every accumulation point of
		$\{s_k\}$ lies in $X^*$. But, since every limit point is bounded
		in norm by $\|t^*\|$ \uvs{and $t^*$ is the least-norm point in
			$X^*$}, it follows that every limit point of
		$\{s_k\}$ is
		$t^*$, the unique least-norm solution. It follows that $\{s_k\}$
		is a convergent sequence that tends to the least-norm solution
		$t^*$.}
\end{proof}
\begin{remark}
We note that part (2) of (b) in the above proposition requires a local
differentiability and boundedness property. This can be seen to be
weaker than a global Lipschitzian requirement. We also note that without
such an assumption, we may still claim that $\{s_k\}$ converges to a
point in $X^*$ but cannot provide a characterization of its limit point. 
\end{remark}

\an{For the convergence of RSSA to $X^*$ in an almost sure sense we have to suitably choose stepsize and the parameters values.
Specifically, there are three user-defined sequences in the RSSA scheme: the stepsize
sequence $\{\g_k\}$, the regularization sequence 
$\{\eta_k\}$, and the smoothing sequence $\{\e_k\}$. At each
iteration, all three parameters are updated.  
We next present the {underlying conditions} on these
	sequences.}
\begin{assumption}\label{assum:IRLSA} Let the following hold: \\
(a) \ $\{\g_k\}$, $\{\eta_k\}$, and $\{\e_k\}$ are positive sequences {for $k \geq 0$} converging to zero;\\
(b) \  There exists $K_1\geq 0$ such that $\frac{\g_k}{\eta_k\epsilon_k^2} \leq 0.5\left(\frac{(n-1)!!}{n!! \kappa C}\right)^2$ for any $k \geq K_1$, {where $n$ is the dimension of the space and $\kappa=1$ if $n$ is odd and $\kappa=\frac{2}{\pi}$ otherwise; }\\
(c) \  {For any $k \geq 0$, $\e_k \leq \e$, where $\e$ is the parameter of the set $X^\e$;}\\
(d) \  $\sum_{k=0}^{\infty}\g_k\eta_k =\infty$; \ \ (e) \
	$\sum_{k=0}^{\infty}\g_k^2 <\infty$; \ \ (f) \  $\sum_{k=0}^{\infty}
	\frac{1}{\eta_{k-1}^2\eta_k\g_k}\left(1-\frac{\min\{\e_k,\e_{k-1}\}}{\max\{\e_k,\e_{k-1}\}}
			\right)^2<\infty$;  \ \ \\ (g) \  $\sum_{k=0}^{\infty} \frac{1}{\eta_k\g_k}\left(1-\frac{\eta_k}{\eta_{k-1}}\right)^2<\infty$;\\
(h) \  $\lim_{k \rightarrow \infty} \frac{\g_k}{\eta_k}=0$; \ \ (i) \  $\lim_{k \rightarrow \infty} \frac{1}{\eta_k^2\g_k}\left(1-\frac{\min\{\e_k,\e_{k-1}\}}{\max\{\e_k,\e_{k-1}\}} \right)=0$; \ \  (j) \  $\lim_{k \rightarrow \infty} \frac{1}{\eta_k\g_k}\left|1-\frac{\eta_k}{\eta_{k-1}}\right|=0$.
\end{assumption}

The notation ``!!'' in the condition in~(b) denotes the double factorial.
Later on, in Lemma
	\ref{lemma:stepsize_a.s.}, we provide \an{some choices for the
	sequences $\{\g_k\}$, $\{\eta_k\}$, and $\{\e_k\}$ that satisfy
	 Assumption \ref{assum:IRLSA}.}

\begin{remark}\label{rem:strong}
If we neither regularize nor smooth, then our scheme reduces
to the SA scheme and the necessary conditions for the almost-sure
convergence would be $\sum_{k=0}^{\infty}\g_k = \infty$,
$\sum_{k=0}^{\infty}\g_k^2 <\infty$, as well as the Lipschitzian property
	and the strong monotonicity of the mapping $F$ (cf.~\cite{Houyuan08}). 
\end{remark}

Next, we establish a recursive relation that relates a bound on the
	difference between $x_{k+1}$ and $s_k$ with that from the prior
		step. Such a relation essentially captures the distance of the
		sequence generated by the RSSA scheme  from the regularized
		smoothed trajectory $\{s_k\}$, \an{which} is {important in
		our proof of the a.s.\ convergence.}
\begin{lemma}[{\bf A recursive relation for $\|x_{k+1}-s_k\|$}]\label{lemma:err_ineq}
\an{Consider the RSSA scheme and 
let Assumptions
\ref{assum:step_error_sub_1}, \ref{assum:step_error_sub_2},
\ref{assum:IRLSA}(b), and \ref{assum:IRLSA}(c) hold. 
Also, assume that there exists $K_2\geq 0$ such that  $\eta_k\g_k
<1$ for all $k \geq K_2$.} {Then, with $K_1$ given by Assumption~\ref{assum:IRLSA}(b),} the
following relation holds a.s.\  for any $k \geq \max\{K_1,K_2\}$:
\begin{align}
\EXP{\|x_{k+1}-s_k\|^2\mid\sF_k} &\leq
	\left(1-\frac{1}{2}\eta_k\g_k\right)\|x_{k}-s_{k-1}\|^2+2C^2\g_k^2+4M^2\eta_k^2\g_k^2\notag
	\\
&+16n^2C^2\left(1-\frac{\min\{\e_k,\e_{k-1}\}}{\max\{\e_k,\e_{k-1}\}}
		\right)^2\frac{1}{\eta_{k-1}^2\eta_k
	\g_k}+4M^2\left(1-\frac{\eta_k}{\eta_{k-1}} \right)^2\frac{1}{\eta_k
		\g_k}.\label{aslemma}
\end{align}
\end{lemma}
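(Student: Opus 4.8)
The plan is to derive a one-step recursion for $\|x_{k+1}-s_k\|^2$ by exploiting that $s_k$ is the solution of the \emph{strongly monotone} deterministic problem VI$(X,F_k+\eta_k\mathbf{I})$, and then to trade the moving reference point $s_k$ for $s_{k-1}$ through a Young-type inequality, feeding in the estimate of Proposition~\ref{prop:sk_estimate}(a). I would start from the equivalent \ref{algorithm:IRLSA} form and, since $s_k\in X$, use the basic projection inequality $\|\Pi_X(u)-v\|^2\le\|u-v\|^2$ (valid for $v\in X$) with $v=s_k$ to get $\|x_{k+1}-s_k\|^2\le\|(x_k-s_k)-\g_k(\Phi(x_k+z_k,\xi_k)+\eta_kx_k)\|^2$. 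Expanding the right-hand side produces three pieces: $\|x_k-s_k\|^2$, a linear (inner-product) term, and the quadratic term $\g_k^2\|\Phi(x_k+z_k,\xi_k)+\eta_kx_k\|^2$. I would then take $\EXP{\cdot\mid\sF_k}$, using that the conditional mean of the sampled map is the smoothed map: by Definition~\ref{def:F_k} and the independence in Assumption~\ref{assum:step_error_sub_2} (cf. Lemma~\ref{lemma:bound-on-errors}), $\EXP{\Phi(x_k+z_k,\xi_k)\mid\sF_k}=F_k(x_k)$, so the linear term becomes $-2\g_k(x_k-s_k)^T(F_k(x_k)+\eta_kx_k)$.

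The contraction comes from the regularized VI structure. Decomposing $F_k(x_k)+\eta_kx_k=(F_k(s_k)+\eta_ks_k)+(F_k(x_k)-F_k(s_k))+\eta_k(x_k-s_k)$, the first group is paired nonnegatively with $x_k-s_k$ by the VI inequality satisfied by $s_k$ (Definition~\ref{def:s_k}, test point $x_k\in X$), the second group is nonnegative by monotonicity of $F_k$ (Lemma~\ref{lemma:F_k_monotone}(c)), and the last contributes $\eta_k\|x_k-s_k\|^2$; hence the linear term is at most $-2\g_k\eta_k\|x_k-s_k\|^2$. For the quadratic term I would use $(a+b)^2\le 2a^2+2b^2$, the uniform bound $\EXP{\|\Phi(x_k+z_k,\xi_k)\|^2\mid\sF_k}\le C^2$ (Assumption~\ref{assum:step_error_sub_1}(c), applicable since $x_k+z_k\in X^\e$ by Assumption~\ref{assum:IRLSA}(c)), and $\|x_k\|\le M$, yielding $2C^2\g_k^2+4M^2\eta_k^2\g_k^2$. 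Combining gives $\EXP{\|x_{k+1}-s_k\|^2\mid\sF_k}\le(1-2\g_k\eta_k)\|x_k-s_k\|^2+2C^2\g_k^2+4M^2\eta_k^2\g_k^2$. (If one instead compares the two projection steps directly, writing $s_k=\Pi_X(s_k-\g_k(F_k(s_k)+\eta_k s_k))$, a residual second-order term $\g_k^2\|F_k(x_k)-F_k(s_k)\|^2$ appears; this is precisely where the Lipschitz estimate of Lemma~\ref{lemma:F_k_monotone}(b) and the bound $\g_k^2L_k^2\le\tfrac12\g_k\eta_k$ of Assumption~\ref{assum:IRLSA}(b) are invoked, while $\eta_k\g_k<1$ keeps the contraction factor in $(0,1)$.)

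The final and most delicate step is to swap the reference $s_k$ for $s_{k-1}$. I would apply $\|u+v\|^2\le(1+\lambda)\|u\|^2+(1+\lambda^{-1})\|v\|^2$ with $u=x_k-s_{k-1}$ and $v=s_{k-1}-s_k$, choosing $\lambda$ of order $\g_k\eta_k$ (concretely $\lambda=\tfrac{3\g_k\eta_k/2}{1-2\g_k\eta_k}$) so that $(1-2\g_k\eta_k)(1+\lambda)= 1-\tfrac12\g_k\eta_k$ while simultaneously $(1-2\g_k\eta_k)(1+\lambda^{-1})\le\tfrac{2}{\g_k\eta_k}$; both bounds hold once $\g_k\eta_k$ is small, which is exactly why the statement is restricted to $k\ge\max\{K_1,K_2\}$. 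Substituting the bound of Proposition~\ref{prop:sk_estimate}(a) for $\|s_k-s_{k-1}\|$ (squared, again via $(a+b)^2\le2a^2+2b^2$) into the $\tfrac{2}{\g_k\eta_k}\|s_k-s_{k-1}\|^2$ penalty produces precisely the trailing terms $16n^2C^2(1-\tfrac{\min\{\e_k,\e_{k-1}\}}{\max\{\e_k,\e_{k-1}\}})^2/(\eta_{k-1}^2\eta_k\g_k)$ and $4M^2(1-\tfrac{\eta_k}{\eta_{k-1}})^2/(\eta_k\g_k)$.

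The main obstacle I anticipate is this joint calibration of $\lambda$: the contraction rate $\g_k\eta_k$ and the inflation factor $1/(\g_k\eta_k)$ attached to the $\|s_k-s_{k-1}\|^2$ penalty pull in opposite directions, and extracting both target coefficients from a single split is the crux. Everything else—the vanishing of the martingale cross term under $\EXP{\cdot\mid\sF_k}$, the sign analysis of the inner-product term, and the constant bookkeeping—is careful but routine.
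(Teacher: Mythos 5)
Your proof is correct, and the core one-step estimate is obtained by a genuinely different route from the paper's. The paper writes $s_k=\Pi_X(s_k-\g_k(F_k(s_k)+\eta_k s_k))$ and compares the two projection steps via nonexpansivity, which leaves the residual quadratic term $\g_k^2\EXP{\|F(x_k+z_k)-F_k(s_k)\|^2\mid\sF_k}$; taming the piece $\g_k^2\|F_k(x_k)-F_k(s_k)\|^2$ is exactly where the Lipschitz estimate of Lemma~\ref{lemma:F_k_monotone}(b) and Assumption~\ref{assum:IRLSA}(b) enter, giving $q_k\triangleq 1-2\eta_k\g_k+\g_k^2\left(\kappa\frac{n!!}{(n-1)!!}\frac{C}{\e_k}\right)^2\le 1-\tfrac{3}{2}\eta_k\g_k$ and then $q_k(1+\eta_k\g_k)\le 1-\tfrac12\eta_k\g_k$. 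You instead use only $\|\Pi_X(u)-s_k\|\le\|u-s_k\|$ (valid since $s_k\in X$) together with the VI inequality $(x_k-s_k)^T(F_k(s_k)+\eta_k s_k)\ge0$ at the test point $x_k\in X$ and the monotonicity of $F_k$, so your quadratic term $\g_k^2\EXP{\|\Phi(x_k+z_k,\xi_k)+\eta_k x_k\|^2\mid\sF_k}\le 2C^2\g_k^2+4M^2\eta_k^2\g_k^2$ never references $F_k(s_k)$; consequently you obtain the cleaner contraction factor $1-2\eta_k\g_k$ and need neither the Lipschitz constant of $F_k$ nor Assumption~\ref{assum:IRLSA}(b), so your argument in fact proves the lemma for all $k\ge K_2$ --- a mild strengthening that also clarifies that Assumption~\ref{assum:IRLSA}(b) is an artifact of the paper's fixed-point comparison rather than intrinsic to this recursion. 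The swap of $s_k$ for $s_{k-1}$ is then the same idea as the paper's: they use $2\|s_k-s_{k-1}\|\,\|x_k-s_{k-1}\|\le \eta_k\g_k\|x_k-s_{k-1}\|^2+\|s_k-s_{k-1}\|^2/(\eta_k\g_k)$ followed by $1+1/(\eta_k\g_k)<2/(\eta_k\g_k)$, while your $(1+\lambda)$, $(1+\lambda^{-1})$ split with $\lambda=\tfrac{3\eta_k\g_k/2}{1-2\eta_k\g_k}$ is an equivalent calibration, and your constants after $(a+b)^2\le 2a^2+2b^2$ applied to Proposition~\ref{prop:sk_estimate}(a) match the target exactly. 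One pedantic point: your $\lambda$ is positive only when $\eta_k\g_k<\tfrac12$, whereas $k\ge K_2$ guarantees only $\eta_k\g_k<1$; in the complementary regime $\eta_k\g_k\in[\tfrac12,1)$ the factor $1-2\eta_k\g_k$ is nonpositive, so the claimed inequality holds trivially by discarding that term --- worth a sentence in a final write-up, but not a gap.
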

\begin{proof} 
Using the fixed point property of the projection operator
	at the solution $s_k$ of the  VI($X,F_k+\eta_k\bf{I})$, we
	\us{may} write $s_k=\Pi_{X}(s_k-\g_k(F_k(s_k)+\eta_ks_k))$.
	Employing the non-expansivity property of the projection operator, the preceding relation, and 
	the~\ref{algorithm:IRLSA} algorithm, we obtain
\begin{align}\label{equ:vv}
\|x_{k+1}-s_k\|^2 & 
\leq \|x_k-\g_k(F(x_k+z_k)+\eta_kx_k+w_k)-s_k+\g_k(F_k(s_k)+\eta_ks_k)\|^2\cr
&=  \|(1-\eta_k\g_k)(x_k-s_k)-\g_k(F(x_k+z_k)-F_k(s_k))-\g_kw_k)\|^2\cr
&=  (1-\eta_k\g_k)^2\|x_{k}-s_k\|^2+\g_k^2\|F(x_k+z_k)-F_k(s_k)\|^2+\g_k^2\|w_k\|^2\cr
&-2\g_k(1-\eta_k\g_k)(x_k-s_k)^T(F(x_k+z_k)-F_k(s_k)) \cr
& -2\fyRev{\g_k}\Bigl((1-\eta_k\g_k)(x_k-s_k)-\g_k(F(x_k+z_k)-F_k(s_k))\Bigr)^Tw_k.
\end{align}
{Using the iterated expectation rule and Lemma~\ref{lemma:bound-on-errors},
we find that
\[\EXP{\Bigl((1-\eta_k\g_k)(x_k-s_k)-\g_k(F(x_k+z_k)-F_k(s_k))\Bigr)^Tw_k\mid\sF_k}=0.\]
Thus, by taking the conditional expectations conditioned on $\sF_k$ in~\eqref{equ:vv} and using 
$\EXP{\|w_k\|^2 \mid\sF_k}\le C^2$ (cf.\ Lemma~\ref{lemma:bound-on-errors}),
we obtain
\begin{align*}
\EXP{ \|x_{k+1}-s_k\|^2 \mid\sF_k}
&\le  (1-\eta_k\g_k)^2\|x_{k}-s_k\|^2+\g_k^2 \EXP{ \|F(x_k+z_k)-F_k(s_k)\|^2\mid\sF_k} +\g_k^2 C^2\cr
&-2\g_k(1-\eta_k\g_k)(x_k-s_k)^T(\EXP{F(x_k+z_k)\mid\sF_k}-F_k(s_k)).
\end{align*}
Noting that
		$\EXP{F(x_k+z_k)\mid \sF_k}=F_k(x_k)$ and using the monotonicity of $F_k$ 
		{(see Lemma~\ref{lemma:F_k_monotone}(c))},
		we further obtain $(x_k-s_k)^T(\EXP{F(x_k+z_k)\mid\sF_k}-F_k(s_k))\ge0$.
		Since $\eta_k\g_k <1$ for any $k \geq K_2$, the term
		$(1-\eta_k\g_k)$ is
	positive
		 implying that, for all $k\ge K_2$ we have almost surely,
\begin{align}\label{equ:oo}
\EXP{ \|x_{k+1}-s_k\|^2 \mid\sF_k}
\le  (1-\eta_k\g_k)^2\|x_{k}-s_k\|^2+\g_k^2 \EXP{ \|F(x_k+z_k)-F_k(s_k)\|^2\mid\sF_k} +\g_k^2 C^2.
\end{align}
To estimate $\EXP{ \|F(x_k+z_k)-F_k(s_k)\|^2\mid\sF_k} $ we add and subtract $F_k(x_k)$,
which yields
\begin{align*}
\|F(x_k+z_k)-F_k(s_k)\|^2
& =
\|F(x_k+z_k)-F_k(x_k)\|^2+ \|F_k(x_k)-F_k(s_k)\|^2 \cr
& +2(F(x_k+z_k)   -F_k(x_k))^T(F_k(x_k)-F_k(s_k))\cr
&\le \|F(x_k+z_k)-F_k(x_k)\|^2+ \fyRev{\left(\kappa\frac{n!!}{(n-1)!!}\frac{C}{\e_k}\right)^2}\|x_k-s_k\|^2\cr
& +2(F(x_k+z_k)   -F_k(x_k))^T(F_k(x_k)-F_k(s_k)),
\end{align*}
where the second inequality follows by the Lipschitz continuity of $F_k$ with
		constant $\kappa\frac{n!!}{(n-1)!!}\frac{C}{\e_k}$ {(see Lemma~\ref{lemma:F_k_monotone}(b))}.
Taking expectations conditioned on $\sF_k$ and using $F_k(x_k)=\EXP{F(x_k+z_k)\mid \sF_k}$,
we find that almost surely for all $k\ge K_2$,
\begin{align*}
\EXP{ \|F(x_k+z_k)-F_k(s_k)\|^2\mid\sF_k} 
& \le 
\EXP{ \|F(x_k+z_k)-F_k(x_k)\|^2\mid\sF_k} + \fyRev{\left(\kappa\frac{n!!}{(n-1)!!}\frac{C}{\e_k}\right)^2}\|x_k-s_k\|^2\cr
&=
\EXP{ \|F(x_k+z_k)\|^2 \mid\sF_k} -\|F_k(x_k)\|^2 + \fyRev{\left(\kappa\frac{n!!}{(n-1)!!}\frac{C}{\e_k}\right)^2}\|x_k-s_k\|^2\cr
&\le C^2 +\fyRev{\left(\kappa\frac{n!!}{(n-1)!!}\frac{C}{\e_k}\right)^2}\|x_k-s_k\|^2,\quad
\end{align*}
where the last inequality is obtained by using $\|F(x_k+z_k)\|\le C$ (see Remark~\ref{rem:C-M})
and by ignoring the negative term.
Substituting the preceding estimate  in the relation~\eqref{equ:oo}, we obtain a.s.\ for $k\ge K_2$,
\begin{align}\label{cond-e3}
\EXP{ \|x_{k+1}-s_k\|^2 \mid\sF_k}
&\le  \left( (1-\eta_k\g_k)^2 + \g_k^2\left(\kappa\frac{n!!}{(n-1)!!}\frac{C}{\e_k}\right)^2\right)\|x_{k}-s_k\|^2 + 2\g_k^2 C^2\cr
& =
	\left(1-2\eta_k\g_k+\eta_k^2\g_k^2+\g_k^2\left(\kappa\frac{n!!}{(n-1)!!}\frac{C}{\e_k}\right)^2\right)\|x_{k}-s_k\|^2
	+ 2\g_k^2C^2.
\end{align}
Using the definition of $M$ in {Remark \ref{rem:C-M}} and the triangle
	inequality, we may write $\|x_k-s_k\|\leq \|x_k\|+\|s_k\| \leq 2M$, 
	which}
leads to the following bound on  $\eta_k^2 \g_k^2
			\|x_k-s_k\|^2,$
$$ 	\eta_k^2 \g_k^2 \|x_k-s_k\|^2 \le 4 \eta_k^2 \g_k^2 M^2. $$
{This inequality and relation~\eqref{cond-e3} yield a.s.\ for all $k\ge K_2$,}
\begin{align}\label{ineq:sk_bound1}
\EXP{\|x_{k+1}-s_k\|^2\mid\sF_k} & \leq
	\left(1-2\eta_k\g_k+\g_k^2\left(\kappa\frac{n!!}{(n-1)!!}\frac{C}{\e_k}\right)^2\right)\|x_{k}-s_k\|^2\cr
	&
	 +2\g_k^2C^2+4\eta_k^2\g_k^2M^2.\end{align}

To obtain a recursion,  we need to
		estimate the term $\|x_k-s_k\|$ in terms of $\|x_k-s_{k-1}\|$.
		Using the triangle inequality, we may write
$\|x_k-s_k\|  \leq \|x_k-s_{k-1}\|+\|s_k-s_{k-1}\|$. Therefore, we obtain
\begin{align}\label{ineq:sk_bound2}
\|x_k-s_k\|^2 & \leq \|x_k-s_{k-1}\|^2+\|s_k-s_{k-1}\|^2+2\|s_k-s_{k-1}\|\|x_k-s_{k-1}\|.
\end{align}
Using {the} relation $2ab \leq a^2+b^2$, for $a,b \in  R$, we have
\begin{align*}
2\|s_k-s_{k-1}\|\|x_k-s_{k-1}\|&= 2\left(\sqrt{\eta_k \g_k}\|x_k-s_{k-1}\|\right)\left(\frac{\|s_k-s_{k-1}\|}{\sqrt{\eta_k \g_k}}\right)\cr & \leq \eta_k \g_k\|x_k-s_{k-1}\|^2+ \frac{\|s_k-s_{k-1}\|^2}{\eta_k \g_k}.
\end{align*}
Combining this result, Proposition \ref{prop:sk_estimate}(a), and (\ref{ineq:sk_bound2}), we obtain for all $k\ge K_2$,
\begin{align}\label{ineq:sk_bound3}
\|x_k-s_k\|^2  & \leq  (1+\eta_k
		\g_k)\|x_k-s_{k-1}\|^2\notag \\ & +2\left(\frac{2nC}{\eta_{k-1}}\left(1-\frac{\min\{\e_k,\e_{k-1}\}}{\max\{\e_k,\e_{k-1}\}}
				\right) +M\left
|1-\frac{\eta_k}{\eta_{k-1}} \right|\right)^2\frac{1}{\eta_k \g_k},
\end{align}
where in the last inequality we used
	$1+{1}\slash({\eta_k \g_k}) < {2}\slash{\eta_k \g_k}$ as a
	consequence of $\g_k\eta_k <1$ for {$k\ge K_2$}. 
	If $q_k$ is defined as 
	$$q_k \triangleq 1- 2\eta_k\g_k
	+\g_k^2\left(\kappa\frac{n!!}{(n-1)!!}\frac{C}{\e_k}\right)^2,$$
	then inequalities (\ref{ineq:sk_bound1}) and (\ref{ineq:sk_bound3})
	imply that for $k \geq K_2$, the following relation holds:
\begin{align}\label{ineq:sk_bound4}
\EXP{\|x_{k+1}-s_k\|^2\mid\sF_k} &\leq q_k(1+\eta_k\g_k)\|x_{k}-s_{k-1}\|^2+2C^2\g_k^2+4M^2\eta_k^2\g_k^2\cr
&+2q_k\left(\underbrace{\frac{2nC}{\eta_{k-1}}\left(1-\frac{\min\{\e_k,\e_{k-1}\}}{\max\{\e_k,\e_{k-1}\}} \right)}_{a}+\underbrace{M\left
|1-\frac{\eta_k}{\eta_{k-1}} \right|}_{b}\right)^2\frac{1}{\eta_k \g_k}. 
\end{align}	
By Assumption \ref{assum:IRLSA}(b), we can write for $k\geq K_1$,
\begin{align*}
\frac{\g_k}{\eta_k\epsilon_k^2} \leq 0.5\left(\frac{(n-1)!!}{n!! \kappa
		C }\right)^2 & \implies
	\g_k^2\left(\kappa\frac{n!!}{(n-1)!!}\frac{C}{\e_k}\right)^2 \leq
	\frac{\eta_k \g_k}{2} \\
	& \implies  - 2\eta_k\g_k +\g_k^2\left(\kappa\frac{n!!}{(n-1)!!}\frac{C}{\e_k}\right)^2 \leq - \frac{3}{2}\eta_k \g_k. 
\end{align*}
Therefore, $q_k \leq 1 - \frac{3}{2}\eta_k \g_k$. Consequently, we
	may provide an upper bound on $q_k(1+\eta_k\g_k)$ using the preceding relation: 
\begin{align*}
q_k(1+\eta_k\g_k) \leq  (1 - \frac{3}{2}\eta \g_k)(1+\eta_k\g_k)= 1-\frac{1}{2}\eta_k\g_k-\frac{3}{2}\eta_k^2\g_k^2 \leq  1-\frac{1}{2}\eta_k\g_k.
\end{align*}
Using relation (\ref{ineq:sk_bound4}) and $q_k \leq 1$ (which follows by $q_k \leq 1 - \frac{3}{2}\eta_k \g_k$), {and $(a+b)^2 \leq 2a^2+2b^2$, we conclude that the desired relation holds.}
\end{proof}

The following supermartingale convergence theorem is a key in our
	analysis in establishing the almost sure convergence of the RSSA
		scheme and
may be found in~\cite{Polyak87} (cf.~Lemma 10, page 49).  
\begin{lemma}[Robbins and Siegmund Lemma]\label{lemma:probabilistic_bound_polyak}
Let $\{v_k\}$ be a sequence of nonnegative random variables, where 
$\EXP{v_0} < \infty$, and let $\{\a_k\}$ and $\{\mu_k\}$
be deterministic scalar sequences such that \fy{$0 \leq \alpha_k \leq 1$, and $\mu_k \geq 0$ for all $k\ge0$, $\sum_{k=0}^\infty \alpha_k =\infty$, $\sum_{k=0}^\infty \mu_k < \infty$, and $\lim_{k\to\infty}\,\frac{\mu_k}{\alpha_k} = 0$, and }
$\EXP{v_{k+1}|v_0,\ldots, v_k} \leq (1-\alpha_k)v_k+\mu_k$ a.s. for all $k\ge0$.
Then, $v_k \rightarrow 0$ almost surely as $k \rightarrow \infty$.
\end{lemma}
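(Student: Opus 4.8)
The plan is to prove the lemma in two stages: first establish that $\{v_k\}$ converges almost surely to a \emph{finite} nonnegative limit $v_\infty$ while simultaneously $\sum_k \alpha_k v_k < \infty$ a.s., and then exploit the divergence $\sum_k \alpha_k = \infty$ to force $v_\infty = 0$. Writing $\mathcal{G}_k \triangleq \sigma(v_0,\ldots,v_k)$ and $S_k \triangleq \sum_{j\ge k}\mu_j$ (which is finite since $\sum_k \mu_k<\infty$, and satisfies $S_k\to0$), I would introduce the auxiliary process $u_k \triangleq v_k + S_k$. The hypothesis $\EXP{v_{k+1}\mid\mathcal{G}_k}\le (1-\alpha_k)v_k+\mu_k$, together with $\alpha_k\ge0$ and the telescoping identity $S_{k+1}+\mu_k=S_k$, yields $\EXP{u_{k+1}\mid\mathcal{G}_k}\le (1-\alpha_k)v_k+S_k\le u_k$ a.s. Hence $\{u_k\}$ is a nonnegative supermartingale with $\EXP{u_0}=\EXP{v_0}+S_0<\infty$.

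By Doob's supermartingale convergence theorem, $u_k$ converges a.s.\ to a finite limit, and since $S_k\to0$ this gives $v_k \to v_\infty$ a.s.\ for some finite $v_\infty\ge0$. To obtain the companion summability statement, I would rearrange the supermartingale inequality into $\alpha_k v_k \le u_k - \EXP{u_{k+1}\mid\mathcal{G}_k}$, take total expectations using the tower property, and telescope to get $\sum_{k=0}^{N}\alpha_k\EXP{v_k}\le \EXP{u_0}-\EXP{u_{N+1}}\le \EXP{u_0}$. Letting $N\to\infty$ gives $\sum_k \alpha_k \EXP{v_k}<\infty$, and since every term is nonnegative, Tonelli's theorem yields $\EXP{\sum_k \alpha_k v_k}<\infty$ and hence $\sum_k \alpha_k v_k<\infty$ a.s.

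For the final step I would restrict attention to the full-measure event on which \emph{both} $v_k\to v_\infty$ and $\sum_k\alpha_k v_k<\infty$ hold. Arguing by contradiction, suppose $v_\infty>0$ on a subset of positive measure of this event; then on that subset $v_k\ge v_\infty/2>0$ for all sufficiently large $k$, so $\sum_k \alpha_k v_k \ge (v_\infty/2)\sum_{k\ge N}\alpha_k=\infty$ by $\sum_k\alpha_k=\infty$, contradicting the established summability. Therefore $v_\infty=0$ a.s., i.e.\ $v_k\to0$ a.s. The main obstacle is the passage from the in-expectation telescoping bound to the almost-sure summability $\sum_k\alpha_k v_k<\infty$, which is handled by applying Tonelli to the nonnegative terms, together with the care needed to intersect the two full-measure events before running the contradiction argument. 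I note that this route uses only $\sum_k\mu_k<\infty$ and $\sum_k\alpha_k=\infty$; the additional hypothesis $\mu_k/\alpha_k\to0$ is not required here, though it would alternatively support a purely deterministic argument showing $\EXP{v_k}\to0$, which by itself delivers only convergence in probability rather than the desired almost-sure statement.
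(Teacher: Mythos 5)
Your proof is correct, but there is nothing in the paper to compare it against line by line: the paper does not prove this lemma, it simply quotes it from Polyak's book (Lemma 10, page 49 of \cite{Polyak87}). What you have reconstructed is essentially the classical Robbins--Siegmund supermartingale argument, and it is sound throughout: the shifted process $u_k = v_k + S_k$ with $S_k=\sum_{j\ge k}\mu_j$ is a nonnegative supermartingale (your telescoping identity $S_{k+1}+\mu_k=S_k$ is exactly what makes this work), Doob's convergence theorem gives $v_k\to v_\infty$ finite a.s., the telescoped expectation bound combined with Tonelli yields $\sum_k\alpha_k v_k<\infty$ a.s., and $\sum_k\alpha_k=\infty$ then rules out $v_\infty>0$. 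Two bookkeeping points you leave implicit are immediate but worth a sentence in a written-out version: integrability of each $v_k$ (needed for the supermartingale property) follows by induction from taking total expectations in the recursion, and in the contradiction step the random index past which $v_k\ge v_\infty/2$ is harmless precisely because the $\alpha_k$ are deterministic, so $\sum_{k\ge N}\alpha_k=\infty$ for every fixed $N$. Your closing observation is also accurate and is the most interesting point of comparison: the almost-sure conclusion needs only $\alpha_k\ge0$, $\sum_k\mu_k<\infty$, and $\sum_k\alpha_k=\infty$, so your route proves the lemma under a strict subset of its stated hypotheses. The extra conditions $0\le\alpha_k\le1$ and $\mu_k/\alpha_k\to0$ belong to the alternative, Chung-type deterministic argument: taking total expectations gives $\EXP{v_{k+1}}\le(1-\alpha_k)\EXP{v_k}+\mu_k$, whence $\EXP{v_k}\to0$, which by Markov's inequality delivers only convergence in probability, not the almost-sure statement the paper actually uses. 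The statement as quoted simply bundles the hypotheses from Polyak's formulation rather than the minimal ones, and your proof makes that visible.
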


We are now ready to present the main convergence result showing that
	the sequence generated by the RSSA scheme has its accumulation
		points in the solution set $X^*$ of the original VI$(F,X)$
		almost surely. Under the assumption that $\e_k/\eta_k\to0$ and 
		suitable local requirements, we may further 
		claim that the sequence converges to the smallest norm solution
		in $X^*$ almost surely.
	
\begin{theorem}[{\bf Almost sure convergence of RSSA scheme}]\label{prop:almost-sure}
Let Assumptions~\ref{assum:step_error_sub_1},
	\ref{assum:step_error_sub_2} and~\ref{assum:IRLSA} hold, and let
	$\{x_k\}$ be given by the RSSA scheme. Then, the following statements hold almost surely:\\
	(a)\ $\lim_{k\rightarrow
		\infty}\|x_{k+1}-s_k\|=0$ and \uvs{every} accumulation point of $\{x_k\}$ is a solution of VI$(X,F)$.\\
	(b)\ \an{Let $t^*$ be the smallest norm solution of VI$(X,F)$.
	If $\lim_{k\to\infty}\frac{\e_k} {\eta_k}=0$ and $F$ is
differentiable at $t^*$ with a bounded Jacobian in a neighborhood of
$t^*$, then $\{x_k\}$ converges to $t^*$.}
\end{theorem}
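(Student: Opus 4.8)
The plan is to apply the Robbins--Siegmund lemma (Lemma~\ref{lemma:probabilistic_bound_polyak}) to the nonnegative random sequence $v_k \triangleq \|x_k - s_{k-1}\|^2$, using the recursion established in Lemma~\ref{lemma:err_ineq}. Since $\eta_k,\g_k\to0$ by Assumption~\ref{assum:IRLSA}(a), there is a $K_2$ beyond which $\eta_k\g_k<1$, so for all $k\ge K\triangleq\max\{K_1,K_2\}$ (with $K_1$ from Assumption~\ref{assum:IRLSA}(b)) Lemma~\ref{lemma:err_ineq} yields
\[
\EXP{v_{k+1}\mid\sF_k}\le\left(1-\tfrac12\eta_k\g_k\right)v_k+\mu_k,
\]
where $\mu_k$ collects the four error terms on the right-hand side of~\eqref{aslemma}. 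I would set $\a_k\triangleq\tfrac12\eta_k\g_k$ and verify the hypotheses of Lemma~\ref{lemma:probabilistic_bound_polyak} on the tail $k\ge K$, convergence being a tail property. First, $\a_k\in(0,1)$ because $\eta_k\g_k\in(0,1)$, and $\sum_k\a_k=\infty$ by Assumption~\ref{assum:IRLSA}(d).

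Next I would establish $\sum_k\mu_k<\infty$ term by term: the $\g_k^2$ term is summable by Assumption~\ref{assum:IRLSA}(e); the $\eta_k^2\g_k^2$ term is summable since $\{\eta_k\}$ is bounded and $\sum_k\g_k^2<\infty$; the smoothing term is summable by Assumption~\ref{assum:IRLSA}(f); and the regularization-ratio term is summable by Assumption~\ref{assum:IRLSA}(g). The most delicate hypothesis is $\mu_k/\a_k\to0$, which I would also check term by term: $\frac{\g_k^2}{\eta_k\g_k}=\frac{\g_k}{\eta_k}\to0$ by Assumption~\ref{assum:IRLSA}(h); $\frac{\eta_k^2\g_k^2}{\eta_k\g_k}=\eta_k\g_k\to0$; the regularization term gives $\left(\frac{1}{\eta_k\g_k}\left|1-\frac{\eta_k}{\eta_{k-1}}\right|\right)^2\to0$ by Assumption~\ref{assum:IRLSA}(j); and the smoothing term, after division by $\a_k$, is a constant multiple of $\frac{d_k^2}{\eta_{k-1}^2\eta_k^2\g_k^2}$, with $d_k\triangleq1-\frac{\min\{\e_k,\e_{k-1}\}}{\max\{\e_k,\e_{k-1}\}}$. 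The hard part will be this last term: I would factor it as $\left(\frac{d_k}{\eta_k^2\g_k}\right)^2\frac{\eta_k^2}{\eta_{k-1}^2}$, use Assumption~\ref{assum:IRLSA}(i) to send the first factor to zero, and observe that Assumption~\ref{assum:IRLSA}(j), together with $\frac{1}{\eta_k\g_k}\to\infty$, forces $\eta_k/\eta_{k-1}\to1$, so the second factor tends to~$1$. Lemma~\ref{lemma:probabilistic_bound_polyak} then gives $v_k\to0$ a.s., i.e.\ $\|x_{k+1}-s_k\|\to0$ a.s.

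To complete part~(a), I would use that $\{x_k\}\subset X$ is bounded by Assumption~\ref{assum:step_error_sub_1}(a), hence has accumulation points. On the almost-sure event where $\|x_{k+1}-s_k\|\to0$, if $x_{k_j}\to\bar x$ then $\|x_{k_j}-s_{k_j-1}\|\to0$ forces $s_{k_j-1}\to\bar x$, so $\bar x$ is an accumulation point of $\{s_k\}$ and hence lies in $X^*$ by Proposition~\ref{prop:sk_estimate}(b)(1); thus every accumulation point of $\{x_k\}$ solves VI$(X,F)$. For part~(b), the extra hypotheses $\e_k/\eta_k\to0$ and local differentiability with bounded Jacobian near $t^*$ are precisely those of Proposition~\ref{prop:sk_estimate}(b)(2), which gives $s_k\to t^*$; combining this with $\|x_{k+1}-s_k\|\to0$ via $\|x_{k+1}-t^*\|\le\|x_{k+1}-s_k\|+\|s_k-t^*\|$ yields $x_k\to t^*$ a.s.
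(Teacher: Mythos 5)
Your proof is correct and follows essentially the same route as the paper's: apply the Robbins--Siegmund lemma (Lemma~\ref{lemma:probabilistic_bound_polyak}) to $v_k=\|x_k-s_{k-1}\|^2$ with $\alpha_k=\tfrac{1}{2}\eta_k\gamma_k$ via the recursion of Lemma~\ref{lemma:err_ineq}, verify its hypotheses from Assumption~\ref{assum:IRLSA}(d)--(j), and then invoke Proposition~\ref{prop:sk_estimate}(b1) and (b2) for parts (a) and (b), respectively. If anything, your handling of the smoothing term in $\mu_k/\alpha_k$---factoring it as $\left(\frac{d_k}{\eta_k^2\gamma_k}\right)^2\frac{\eta_k^2}{\eta_{k-1}^2}$ and using Assumption~\ref{assum:IRLSA}(j) together with $\eta_k\gamma_k\to0$ to force $\eta_k/\eta_{k-1}\to1$---is more explicit than the paper's one-line appeal to Assumption~\ref{assum:IRLSA}(i).
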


\begin{proof} (a) \ 
From Assumption \ref{assum:IRLSA}(a), $\g_k$ and $\eta_k$ go to zero. Thus, there exists a constant $K_2 \geq 0$ such that $\g_k\eta_k < 1$ for any $k \geq K_2$. \fyRev{Let us define sequences }$\{v_k\}$, $\{\alpha_k\}$, and $\{\mu_k\}$ for $k \geq \fytwo{\max\{K_1,K_2\}}$ \fy{given by $v_k \triangleq \|x_{k}-s_{k-1}\|, \ \alpha_k \triangleq \frac{1}{2}\g_k\eta_k$ and} 
\begin{align*}
  \fy{\mu_k \triangleq 2C^2\g_k^2+4M^2\eta_k^2\g_k^2+16n^2C^2\overbrace{\left(1-\frac{\min\{\e_k,\e_{k-1}\}}{\max\{\e_k,\e_{k-1}\}} \right)^2\frac{1}{\eta_{k-1}^2\eta_k \g_k}}^{\mbox{Term 1}}+4M^2\overbrace{\left(1-\frac{\eta_k}{\eta_{k-1}} \right)^2\frac{1}{\eta_k \g_k}}^{\mbox{Term 2}}.}
\end{align*}
Therefore, Lemma \ref{lemma:err_ineq} implies that $\EXP{v_{k+1}\mid { \sF_k}} 
\le (1-\alpha_k)v_k  + \mu_k,\hbox{ for } k \geq
\fytwo{\max\{K_1,K_2\}}.$ To claim convergence of the sequence
$\{x_k\}$, we show that conditions of Lemma
\ref{lemma:probabilistic_bound_polyak} hold. The nonnegativity of $v_k$,
	$\alpha_k$, and $\mu_k$ for $k \geq \fytwo{\max\{K_1,K_2\}}$ is
	trivial. Assumption \ref{assum:IRLSA}(d) indicates that the
	condition $\sum_{k}\alpha_k= \infty$ is satisfied. On the other
	hand, positivity of $\g_k$ and $\eta_k$ indicates that $\alpha_k
	\leq 1$ holds for $k \geq  \fytwo{\max\{K_1,K_2\}}$. Since $\eta_k$
	goes to zero, there exists a bound $\bar \eta$ such that $\eta_k
	\leq \bar \eta$ \uvs{for $k \geq \max\{K_1,K_2\}$}. Therefore, \fy{$\mu_k \leq  (2C^2+4M^2\bar
			\eta^2)\g_k^2+ 16n^2C^2(\mbox{Term 1}) +4M^2(\mbox{Term
				2})$.} Assumptions \ref{assum:IRLSA}(e), (f), and (g)
	show that \fy{$\g_k^2$, $\mbox{Terms 1 and 2}$} are summable.
	Therefore, we conclude that $\mu_k$ is summable too. It remains to
	show that $\lim_{k\rightarrow \infty}\frac{\mu_k}{\alpha_k}=0$. It
	suffices to show that $$\lim_{k\rightarrow
		\infty}\frac{\g_k^2}{\alpha_k}=0, \lim_{k\rightarrow
			\infty}\frac{\mbox{Term 1}}{\alpha_k}=0, \mbox{ and }
			\lim_{k\rightarrow \infty}\frac{\mbox{Term 2}}{\alpha_k}=0.$$ These three conditions hold due to Assumption \ref{assum:IRLSA} (h), (i), and (j) respectively. In conclusion, all of the conditions of Lemma \ref{lemma:probabilistic_bound_polyak} hold and thus $\|x_{k}-s_k\|$ goes to zero almost surely. 
			
Since $F$ is continuous and $\eta_k$ and $\e_k$ go to zero, Proposition \ref{prop:sk_estimate}(b1) 
implies that any limit point of the sequence $\{s_k\}$ converges to a solution of VI$(X,F)$. Hence, from the result of part (a), we conclude that any accumulation point of the sequence $\{x_k\}$ generated by {the \ref{algorithm:IRLSA-impl} algorithm} converges to a solution of VI$(X,F)$ almost surely.\\
{(b) \ 
The statement in part (b) follows by part (a) and Proposition~\ref{prop:sk_estimate}(b2).
}
\end{proof}

A reader might question whether Assumption \ref{assum:IRLSA} is
	vacuous in that there are no set of sequences satisfying the
		required assumptions. We \an{show} that this is not the case by
		showing that there is a set of stepsize, regularization, and
		smoothing sequences that satisfy the given conditions. 
\begin{lemma}\label{lemma:stepsize_a.s.}
Suppose sequences $\{\g_k\}$, $\{\eta_k\}$, and $\{\e_k\}$ are given by $\g_k=\g_0(k+1)^{-a}$, $\eta_k=\eta_0(k+1)^{-b}$, and $\e_k=\e_0(k+1)^{-c}$ where $a$, $b$, and $c$ satisfy the following conditions:
\begin{align*}
\fytwo{a,b,c>0, \quad a+3b<1, \quad a > b+2c, \quad a > 0.5},
\end{align*}
and $\g_0$, $\eta_0$, $\e_0$ are \fytwo{strictly} positive scalars and  \fytwo{$\e_0 = \e$}.
Then, sequences $\{\g_k\}$, $\{\eta_k\}$, and $\{\e_k\}$ satisfy Assumption \ref{assum:IRLSA}. 
\end{lemma}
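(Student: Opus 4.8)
The plan is to verify directly that the proposed power-law sequences satisfy each of the ten conditions (a)--(j) of Assumption~\ref{assum:IRLSA}. Substituting $\g_k=\g_0(k+1)^{-a}$, $\eta_k=\eta_0(k+1)^{-b}$, and $\e_k=\e_0(k+1)^{-c}$ turns every condition into either a limit of the form $(k+1)^{-d}\to0$ (valid iff $d>0$) or a series $\sum_k (k+1)^{-d}$ (convergent iff $d>1$), so the whole argument reduces to bookkeeping of exponents against the four hypotheses $a,b,c>0$, $a+3b<1$, $a>b+2c$, and $a>0.5$.

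First I would dispose of the conditions that do not involve ratio-defect terms. Condition (a) is immediate since $a,b,c>0$ and the constants are positive. For (c), note $\e_k=\e_0(k+1)^{-c}\le\e_0=\e$ because $(k+1)^{-c}\le1$. Writing $\frac{\g_k}{\eta_k\e_k^2}=\frac{\g_0}{\eta_0\e_0^2}(k+1)^{-(a-b-2c)}$, the hypothesis $a>b+2c$ forces the exponent negative, so the ratio tends to $0$ and is eventually below the constant in (b); likewise $\frac{\g_k}{\eta_k}=\frac{\g_0}{\eta_0}(k+1)^{-(a-b)}\to0$ since $a>b+2c>b$, giving (h). For (d) and (e), $\g_k\eta_k\asymp(k+1)^{-(a+b)}$ and $\g_k^2\asymp(k+1)^{-2a}$; divergence in (d) needs $a+b\le1$, which follows from $a+3b<1$ together with $b>0$, while convergence in (e) needs $2a>1$, i.e.\ $a>0.5$.

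The crux is the treatment of the four remaining conditions (f), (g), (i), (j), which contain the factors $1-\frac{\e_k}{\e_{k-1}}$ and $1-\frac{\eta_k}{\eta_{k-1}}$ (the min/max ratios collapse to these because $b,c>0$ makes $\{\eta_k\}$ and $\{\e_k\}$ strictly decreasing). The key estimate is that for any fixed exponent $p>0$,
\[ 1-\left(\frac{k}{k+1}\right)^p=\Theta\!\left(\frac1k\right)\qquad\text{as }k\to\infty, \]
which I would establish by the mean value theorem applied to $t\mapsto t^p$ on $[\,k/(k+1),1\,]$, or equivalently by the expansion $(1+1/k)^{-p}=1-p/k+O(1/k^2)$. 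Applying this with $p=c$ and $p=b$ yields $1-\e_k/\e_{k-1}=\Theta(1/k)$ and $1-\eta_k/\eta_{k-1}=\Theta(1/k)$.

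Finally I would plug these estimates into (f), (g), (i), (j). Using $\eta_{k-1}=\eta_0 k^{-b}$, the summand in (f) behaves like $k^{2b}(k+1)^{a+b}\cdot k^{-2}\asymp k^{a+3b-2}$, so summability holds iff $a+3b<1$; the summand in (g) behaves like $(k+1)^{a+b}\cdot k^{-2}\asymp k^{a+b-2}$, summable since $a+b<1$. For the limits, the term in (i) is of order $k^{a+2b}\cdot k^{-1}=k^{a+2b-1}$, which vanishes because $a+2b<1$ (again from $a+3b<1$, $b>0$), and the term in (j) is of order $k^{a+b}\cdot k^{-1}=k^{a+b-1}\to0$ since $a+b<1$. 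Collecting all ten verifications completes the argument. The only genuinely nontrivial step is the $\Theta(1/k)$ estimate of the ratio defects; once it is in hand, every condition collapses to one of the exponent inequalities already assumed, so that estimate together with the exponent bookkeeping is where all the care is needed.
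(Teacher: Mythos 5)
Your proposal is correct and follows essentially the same route as the paper's own proof: direct condition-by-condition verification, where the only nontrivial ingredient is the estimate $1-\bigl(k/(k+1)\bigr)^p=\Theta(1/k)$ for the ratio defects (the paper obtains it via the Taylor expansion of $(1-x)^p$, you via the mean value theorem, which is equivalent), after which each of (a)--(j) reduces to the assumed exponent inequalities exactly as you list them. The only cosmetic differences are that your argument for (h) uses $a>b+2c>b$ directly, which is slightly cleaner than the paper's chain $b<(1-a)/3<1/6<0.5<a$, and that in (f) you keep $\eta_{k-1}$ exactly while the paper first bounds $\eta_{k-1}^{-2}\eta_k^{-1}$ by $\eta_k^{-3}$; both yield the same exponent $a+3b-2$.
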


\begin{proof} We show that each part of Assumption \ref{assum:IRLSA} holds as follows:\\
(a) \ Assumption \ref{assum:IRLSA}(a) holds since $a$, $b$, $c$, $\g_0$, $\eta_0$, and $\e_0$ are \fytwo{strictly} positive. \\
(b) \ To show that \an{condition 3(b)} holds, we note that
$$\frac{\g_k}{\eta_k\epsilon_k^2} =
\frac{\g_0(k+1)^{-a}}{\eta_0(k+1)^{-b}\epsilon_0^2(k+1)^{-2c}}=
(k+1)^{-(a-b-2c)}\frac{\g_0}{\eta_0\epsilon_0^2}.$$
Since {$a > b+2c$, then $(k+1)^{-(a-b-2c)} \rightarrow 0$. Therefore, $\frac{\g_k}{\eta_k\epsilon_k^2} \rightarrow 0$ implying that
there exists $K_1\geq 0$ such that $$\frac{\g_k}{\eta_k\epsilon_k^2}
\leq 0.5\left(\frac{(n-1)!!}{n!! \kappa C}\right)^2$$ for any $k\geq K_1$. This indicates that Assumption \ref{assum:IRLSA}(b) holds.}\\
(c) \ \an{Condition 3(c) is satisfied since} $\e_k \leq \e_0$ for any $k \geq 0$ and \fytwo{$\e_0 = \e$}.\\ 
(d) \ We have
$\sum_{k=0}^\infty\eta_k\g_k=\eta_0\g_0\sum_{k=0}^\infty{1}\slash{(k+1)^{a+b}}.$
Since $a, b >0$ and \fytwo{$a+3b < 1$, then $a+b < 1$. Thus,}
$\sum_{k=0}^\infty{1}\slash{(k+1)^{a+b}}=\infty$. Therefore, Assumption \ref{assum:IRLSA}(d) is met. \\
(e) \ To \an{verify that condition 3(e)} holds, \us{it suffices to show} that $\g_k^2$ is
summable. We have $\g_k^2=\g_0^2(k+1)^{-2a}$ and $2a>1$ since $a >0.5$.
Therefore, $\g_k^2$ is summable.\\
(f) \ Note that sequences $\{\eta_k\}$ and $\{\e_k\}$ are both decreasing. Therefore, 
\[ \frac{1}{\eta_{k-1}^2\eta_k\g_k}\left(1-\frac{\min\{\e_k,\e_{k-1}\}}{\max\{\e_k,\e_{k-1}\}} \right)^2=\frac{1}{\eta_{k-1}^2\eta_k\g_k}\left(1-\frac{\e_k}{\e_{k-1}} \right)^2 <  \frac{1}{\eta_k^3\g_k}\left(1-\frac{\e_k}{\e_{k-1}} \right)^2\triangleq \mbox{Term 1}.\]
\fy{It suffices to show that} $\mbox{Term 1}$ is summable. First, we
estimate $1-{\e_k}\slash{\e_{k-1}}$. We have $$1-\frac{\e_k}{\e_{k-1}}=
1-\frac{\e_0(k+1)^{-c}}{\e_0k^{-c}}=1-\left(\frac{k}{k+1}\right)^c=1-\left(1-\frac{1}{k+1}\right)^c.$$
Recall that the Taylor expansion of $(1-x)^{p}$ for $|x|<1$ and any
scalar $p$ is given by $$(1-x)^{p}=\sum_{j=0}^\infty
(-1)^j\left(\begin{array}{c} p\\ j
		\end{array}\right)x^j=1-px+\frac{p(p-1)}{2}x^2-\frac{p(p-1)(p-2)}{6}x^3+\cdots.$$
Using this expansion for $x=\frac{1}{k+1}$ and $p=c$, we have
\[1-\frac{\e_k}{\e_{k-1}}=1 -
\left(1-c\frac{1}{k+1}+\frac{c(c-1)}{2}\frac{1}{(k+1)^2}-\frac{c(c-1)(c-2)}{6}\frac{1}{(k+1)^3}+\cdots\right)=\us{\cal
	O}(k^{-1}).\]
Therefore, from the preceding relation, we obtain $$\mbox{Term
	1}=\frac{\us{\cal O}(k^{-2})}{\eta_0^3\g_0(k+1)^{-3b-a}}=\us{\cal O}(k^{-(2-a-3b)}).$$
To \us{ensure summability of $\mbox{Term 1}$, it suffices that
$2-a-3b>1$ or equivalently $a+3b<1$. This holds by assumption and
\an{condition 3(f)} is met.}\\ 
(g) \ In a similar fashion that we used in part (f), we can show that
$1-\frac{\eta_k}{\eta_{k-1}}=\us{\cal O}(k^{-1})$. Consider Term 3 defined as
follows:
$$\mbox{Term 3}\triangleq \frac{1}{\eta_k\g_k}\left( 1-
		\frac{\eta_k}{\eta_{k-1}} \right)^2
\frac{\us{\cal O}(k^{-2})}{\eta_0\g_0(k+1)^{-(a+b)}}=\us{\cal O}(k^{-(2-a-b)}).$$
To show that condition (g) is satisfied, it suffices to show that
$\mbox{Term 3}$ is summable. From the preceding relation, we need to
show that $2-a-b>1$ or equivalently $a+b<1$. We assumed that $a+3b <1$
and $b>0$. Thus, we have $a+b=a+3b-2b<1-2b<1$. Therefore, $\us{\cal O}(k^{-(2-a-b)})$ is summable and 
we conclude that \an{condition 3(g) is satisfied}.  \\
(h) \ \us{We have ${\g_k}\slash{\eta_k}=
{\g_0(k+1)^{-a}}\slash{(\eta_0(k+1)^{-b})}=({\g_0}\slash{\eta_0})(k+1)^{-(a-b)}.$ 
To show that ${\g_k}\slash{\eta_k}$ goes to zero when $k$ goes to
infinity, we only need to show that $a>b$. We assumed that $a+3b < 1$.
Therefore, $b<(1-a)\slash 3$. Since $a>0.5$, the preceding relation
yields $b<1\slash 6$. Thus, $b<0.5<a$, implying that \an{condition 3(h)}
	holds.} \\
(i) \ From part (f), we have $1-{\e_{k}}\slash{\e_{k-1}}=\us{\cal O}(k^{-1})$. To show the condition 3(i), we write
\[\mbox{Term
	4}\triangleq\frac{1}{\eta_k^2\g_k}\left(1-\frac{\min\{\e_k,\e_{k-1}\}}{\max\{\e_k,\e_{k-1}\}}
			\right)=\frac{1}{\eta_0^2\g_0(k+1)^{-a-2b}}\us{\cal
		O}(k^{-1})=\us{\cal O}(k^{-(1-a-2b)}).\]
Thus, it suffices to show that $a+2b<1$. This is true since $a+3b<1$ and $b>0$. Hence, $\mbox{Term 4}$ 
goes to zero implying that \an{the condition 3(i)} holds.\\
(j) \ Term 5 is defined as  $$\mbox{Term 5}\triangleq
{\frac{1}{\eta_k\g_k}\left|1-\frac{\eta_k}{\eta_{k-1}}\right|}=\frac{1}{\eta_0\g_0(k+1)^{-a-b}}\us{\cal
	O}(k^{-1})=\us{\cal O}(k^{-(1-a-b)}).$$ Since $a+3b<1$ and $b>0$, we have $a+b<1$, showing that $\mbox{Term 5}$ converges to zero.
\end{proof}

{In order to satisfy the additional condition
$\lim_{k\to\infty}\frac{\e_k}{\eta_k}=0$ used in
	Proposition~\ref{prop:sk_estimate}(b2) and
	Theorem~\ref{prop:almost-sure}(b), one would need an additional
	requirement $b-c<0$ in Lemma~\ref{lemma:stepsize_a.s.}.  As a
	concrete example satisfying the conditions in
	Lemma~\ref{lemma:stepsize_a.s.}, consider the choice
	$a=\frac{9}{16},$ $b=\frac{2}{16}$, and $c=\frac{3}{16}$.  In this
	case we also have $\lim_{k\to\infty}\frac{\e_k}{\eta_k}=0$ since
	$b-c<0$.}

\subsection{\us{Rate of convergence to regularized smoothed trajectory}}\label{sec:mean}
Thus far, we have discussed the convergence of the
	sequence $\{x_k\}$ generated by the RSSA scheme
	in an almost sure sense. Naturally, one may
	be curious about the rate at which these iterates converge to the solution set. 
	\an{In the existing literature, the
	development of non-asymptotic rates of convergence has been
	provided either in terms of mean-squared
			error for solution iterates or in terms of the mean gap
			function (see Table~\ref{tab:SVI_algorithms}). 
			However, we are unaware of any statements provided in
	non-Lipschitzian and merely monotone regimes in terms of solution
	iterates. In this subsection, we provide a partial answer to this
	question.}

Our metric of convergence is the distance from the solution set $X^*$, denoted by $\mbox{dist}(x_k,X^*)$, and  
{the question is at
what rate the error dist$\big(x_k,X^*\big)$ will diminish to zero.} {We
	may provide a partial answer by establishing}
the rate at which the sequence $\{x_k\}$ approaches
the regularized smoothed trajectory $\{s_k\}$. The idea is as follows: At
step $k$, instead of comparing the iterate $x_k$ with a true solution
$x^*$, we want to estimate the distance between $x_k$ and the
approximate solution $s_k$. Note that, as the algorithm proceeds, we
expect $s_k$ to be approaching the solution set $X^*$ (according to Proposition~\ref{prop:sk_estimate}). 
\an{We begin the discussion by providing an assumption on the
sequences}. 
\begin{assumption}\label{assum:conv-exp}
Let the following hold:\\
(a) \  There exist $\delta \in (0,0.5)$ and $K_3\geq 0$ such that for any $k \geq K_3,$
 $$\frac{\g_k}{\eta_k\e_k^2} \leq
 \frac{\g_{k+1}}{\eta_{k+1}\e_{k+1}^2}(1+\delta \eta_{k+1}\g_{k+1});$$ \\
(b) \ There exists a constant $B_1>0$ such that for any $k \geq 0$,
$$\frac{\e_k^2}{\eta_{k-1}^2\eta_k\g_k^3}\left(1-\frac{\min\{\e_k,\e_{k-1}\}}{\max\{\e_k,\e_{k-1}\}}
		\right)^2\leq B_1;$$\\
(c) \ There exists a constant $B_2>0$ such that for any $k \geq 0$,
$$\frac{\e_k^2}{\eta_k\g_k^3}\left(1-\frac{\eta_k}{\eta_{k-1}}\right)^2\leq
{B_2}.$$
\end{assumption}

The following result provides a bound on the error that relates the iterates $\{x_k\}$ and the approximate sequence $\{s_k\}$. 
This result provides us an estimate of the performance of our algorithm with respect to the iterates of the solutions to the approximated problems VI$(X,F_k+\eta_k\bf{I})$.

\begin{proposition}[{\bf An upper bound for $\EXP{\|x_{k+1}-s_k\|^2}$}]\label{prop:UB-MSE}
Consider the RSSA scheme where $\{\g_k\}$, $\{\eta_k\}$, and $\{\e_k\}$ are {strictly} positive sequences. Let Assumptions \ref{assum:step_error_sub_1}, \ref{assum:step_error_sub_2}, \ref{assum:IRLSA}(b), \ref{assum:IRLSA}(c), and \ref{assum:conv-exp} hold. Suppose $\{\eta_k\}$ is bounded by some $\bar \eta>0$ and there exists some scalar $K_2\geq 0$ such that for any $k \geq K_2$ we have $\eta_k\g_k <1$. Then, 
\begin{align}\label{ineq:MSE-result}
\EXP{\|x_{k+1}-s_k\|^2}\leq \theta \frac{\g_k}{\eta_k\e_k^2}, \qquad \hbox{ for any }k \geq \bar K,
\end{align}
where $\bar K \triangleq \fytwo{\max\{K_1,K_2,K_3\}}$, $s_k$ is the unique solution of VI$(X, F_k+\eta_k\bf{I})$, $K_1$ and $K_3$ are given by Assumptions \ref{assum:IRLSA}(b) and {\ref{assum:conv-exp}(a)}, respectively. 
\an{The parameter $\theta$ in~\eqref{ineq:MSE-result} is such that}
\begin{align}\label{def:theta}
\theta\ge\max\left(4M^2\frac{\eta_{\bar K}\e_{\bar K}^2}{\g_{\bar K}}, \frac{2C^2 \e^2+4M^2\bar \eta^2\e^2 +16n^2C^2B_1
+4M^2B_2}{0.5-\delta}\right).
\end{align}
\end{proposition}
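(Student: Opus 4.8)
The plan is to prove the bound by induction on $k$, using the recursion already established in Lemma~\ref{lemma:err_ineq} as the engine. First I would take total expectations in \eqref{aslemma} to remove the conditioning on $\sF_k$, obtaining for $k \geq \max\{K_1,K_2\}$ a purely deterministic recursion of the form $u_{k+1} \leq \left(1-\tfrac{1}{2}\eta_k\g_k\right)u_k + \mu_k$, where $u_k \triangleq \EXP{\|x_k-s_{k-1}\|^2}$ and $\mu_k$ collects the four nonnegative remainder terms. Writing $\beta_k \triangleq \g_k/(\eta_k\e_k^2)$, so that $\eta_k\g_k\beta_k = \g_k^2/\e_k^2$, the goal becomes to show $u_{k+1} \leq \theta\beta_k$ for all $k \geq \bar K$.

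The key preliminary step is a per-step estimate showing that $\mu_k$ is dominated by $(\tfrac{1}{2}-\delta)\theta\,\eta_k\g_k\beta_k = (\tfrac{1}{2}-\delta)\theta\,\g_k^2/\e_k^2$. I would establish this by multiplying $\mu_k$ through by $\e_k^2/\g_k^2$ and bounding each of its four pieces: the first two contribute $2C^2\e_k^2 + 4M^2\eta_k^2\e_k^2 \leq 2C^2\e^2 + 4M^2\bar\eta^2\e^2$ by using $\e_k \leq \e$ (Assumption~\ref{assum:IRLSA}(c)) and $\eta_k \leq \bar\eta$; the third and fourth are precisely the quantities bounded by $B_1$ and $B_2$ in Assumptions~\ref{assum:conv-exp}(b) and~\ref{assum:conv-exp}(c), yielding $16n^2C^2B_1$ and $4M^2B_2$. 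Summing and dividing by $0.5-\delta$ reproduces exactly the second argument of the maximum defining $\theta$ in \eqref{def:theta}, so the per-step estimate holds for every $k$.

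With this in hand the induction closes cleanly. For the base case $k = \bar K$, since $x_{\bar K+1}$ and $s_{\bar K}$ both lie in $X$, boundedness of $X$ (Remark~\ref{rem:C-M}) gives $\|x_{\bar K+1}-s_{\bar K}\| \leq 2M$, hence $u_{\bar K+1} \leq 4M^2$; as the first argument of the maximum in \eqref{def:theta} is exactly $4M^2/\beta_{\bar K}$, we get $\theta\beta_{\bar K} \geq 4M^2 \geq u_{\bar K+1}$. For the inductive step, assuming $u_k \leq \theta\beta_{k-1}$ for some $k \geq \bar K+1$, I would substitute into the recursion and convert $\beta_{k-1}$ into $\beta_k$ via Assumption~\ref{assum:conv-exp}(a) written as $\beta_{k-1} \leq \beta_k(1+\delta\eta_k\g_k)$. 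The elementary inequality $(1-\tfrac{1}{2}\eta_k\g_k)(1+\delta\eta_k\g_k) \leq 1-(\tfrac{1}{2}-\delta)\eta_k\g_k$, valid because $\delta<\tfrac{1}{2}$ and the discarded term $-\tfrac{\delta}{2}(\eta_k\g_k)^2$ is negative, then yields $u_{k+1} \leq \theta\beta_k - (\tfrac{1}{2}-\delta)\eta_k\g_k\theta\beta_k + \mu_k$, and the per-step estimate cancels the negative term, leaving $u_{k+1} \leq \theta\beta_k$.

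The main obstacle I anticipate is index bookkeeping rather than any single hard estimate. I must verify that every index where a hypothesis is invoked is admissible: Lemma~\ref{lemma:err_ineq} requires $k \geq \max\{K_1,K_2\}$, while converting $\beta_{k-1}$ to $\beta_k$ through Assumption~\ref{assum:conv-exp}(a) (stated for indices $\geq K_3$) requires $k-1 \geq K_3$, that is $k \geq K_3+1$. Restricting the inductive step to $k \geq \bar K+1 = \max\{K_1,K_2,K_3\}+1$ simultaneously satisfies all three constraints, so the genuinely delicate point is aligning the off-by-one between $\beta_{k-1}$ and $\beta_k$ and confirming that the base case at $k=\bar K$ is carried by the first term of $\theta$ while every subsequent step relies only on the second.
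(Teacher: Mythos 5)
Your proposal is correct and takes essentially the same route as the paper's proof: both take total expectations in Lemma~\ref{lemma:err_ineq}, handle the base case $k=\bar K$ via boundedness of $X$ (so that $\EXP{\|x_{\bar K+1}-s_{\bar K}\|^2}\le 4M^2$, exactly the first argument of the maximum in \eqref{def:theta}), and close the induction by shifting $\g_{k-1}/(\eta_{k-1}\e_{k-1}^2)$ to $(1+\delta\eta_k\g_k)\,\g_k/(\eta_k\e_k^2)$ via Assumption~\ref{assum:conv-exp}(a), dropping the negative $(\eta_k\g_k)^2$ term, and absorbing the remainder terms through Assumptions~\ref{assum:conv-exp}(b)--(c), $\e_k\le\e$, and $\eta_k\le\bar\eta$ into the second argument of the maximum. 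Your only deviations are organizational---isolating the per-step bound on $\mu_k$ up front rather than collecting everything into the paper's final nonpositivity condition on ``Term 1,'' and making the off-by-one index constraint $k\ge K_3+1$ explicit---which if anything tightens the paper's bookkeeping.
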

\begin{proof}
We begin by employing Lemma~\ref{lemma:err_ineq} and by letting
\an{$e_k\triangleq\EXP{\|x_{k}-s_{k-1}\|^2}$ for all $k$}. 
Taking expectations on both sides of~\eqref{aslemma} in Lemma
	\ref{lemma:err_ineq}, we obtain a recursion in terms of the mean
		squared error $e_k$. For any $k \geq \bar K +1$ we have
 \begin{align}\label{ineq:e_k-bound}
e_{k+1} &\leq
\left(1-\frac{1}{2}\eta_k\g_k\right)e_k+2C^2\g_k^2+4M^2\eta_k^2\g_k^2\notag \\
	& +16n^2C^2\frac{\left(1-\frac{\min\{\e_k,\e_{k-1}\}}{\max\{\e_k,\e_{k-1}\}} \right)^2}{\eta_{k-1}^2\eta_k\g_k}
+4M^2\frac{\left(1-\frac{\eta_k}{\eta_{k-1}} \right)^2}{\eta_k \g_k}.
\end{align}
To prove the result, we employ the mathematical induction on $k$. The
first step is to show that the result holds for $k=\bar K$. Using the
definition of $M$ in Remark~\ref{rem:C-M} and the Cauchy-Schwartz
inequality, we may write 
 \begin{align*}
 e_{\bar K+1} &=  \EXP{\|x_{\bar K+1}\|^2-2x_{\bar K+1}^Ts_{\bar K}+\|s_{\bar K}\|^2} 
 \leq  \EXP{\|x_{\bar K+1}\|^2+2\|x_{\bar K+1}\|\|s_{\bar K}\|+\|s_{\bar K}\|^2} \cr  
 & \leq  M^2+2M^2+M^2 
 = \left(4M^2\frac{\eta_{\bar K}\e_{\bar K}^2}{\g_{\bar K}}\right)\frac{\g_{\bar K}}{\eta_{\bar K}\e_{\bar K}^2}.\end{align*}
Let us define $\theta_{\bar K}\triangleq 4M^2{\eta_{\bar K}\e_{\bar
	K}^2}\slash{\g_{\bar K}}$. Thus, the preceding relation implies that \an{relation~\eqref{ineq:MSE-result} 
	holds} for $k=\bar K$ with $\theta= \theta_{\bar K}$.
	Now, suppose {$e_{t+1} \leq \theta {\g_t}\slash({\eta_t\e_t^2})$} for
	$\bar K < t \leq k-1$ for some finite constant \an{$\theta\ge\theta _{\bar K}$}. We will
	proceed to show that $e_{k+1} \leq \theta
{\g_k}\slash({\eta_k\e_k^2})$. Using the
	induction hypothesis, relation (\ref{ineq:e_k-bound}), and
	Assumptions~\ref{assum:conv-exp}(b) and \an{\ref{assum:conv-exp}(c)}, we obtain
\begin{align*}
e_{k+1} \leq  \left(1-\frac{1}{2}\eta_k\g_k\right)\theta \frac{\g_{k-1}}{\eta_{k-1}\e_{k-1}^2}+2C^2\g_k^2+4M^2\eta_k^2\g_k^2+16n^2C^2\frac{\g_k^2}{\e_k^2}B_1+4M^2\frac{\g_k^2}{\e_k^2}B_2.
\end{align*}
Using Assumption \ref{assum:conv-exp}(a), we obtain
\begin{align}\label{ineq:e_k_bound1-1}
e_{k+1} \leq   \left(1-\frac{1}{2}\eta_k\g_k\right)(1+\delta\eta_k\g_k)\theta \frac{\g_{k}}{\eta_{k}\e_{k}^2}+2C^2\g_k^2+4M^2\eta_k^2\g_k^2+16n^2C^2\frac{\g_k^2}{\e_k^2}B_1+4M^2\frac{\g_k^2}{\e_k^2}B_2.
\end{align}
\an{Note that, we have}
\begin{align}\label{ineq:e_k_bound1-2}
 \fyRev{\left(1-\frac{1}{2}\eta_k\g_k\right)(1+\delta\eta_k\g_k)\theta \frac{\g_{k}}{\eta_{k}\e_{k}^2}
  = \theta\frac{\g_{k}}{\eta_{k}\e_{k}^2}- \theta \left(\frac{\delta}{2}\right)\frac{\eta_k\g_k^3}{\e_k^2}+\theta \eta_k\g_k\left(-\frac{1}{2}+\delta\right)\frac{\g_{k}}{\eta_{k}\e_{k}^2}.}
 \end{align}
Using nonpositivity of $- \theta
\left(\frac{\delta}{2}\right)\frac{\eta_k\g_k^3}{\e_k^2}$ and equation
 (\ref{ineq:e_k_bound1-2}), relation (\ref{ineq:e_k_bound1-1})
	can be expressed as follows:
\begin{align}\label{ineq:e_k_bound2}
e_{k+1} \leq
\theta\frac{\g_{k}}{\eta_{k}\e_{k}^2}+\frac{\g_{k}^2}{\e_k^2}\overbrace{\left[-\theta
	\left(\frac{1}{2}-\delta\right)+2C^2 \e^2+4M^2\bar \eta^2\e^2
		+16n^2C^2B_1+4M^2B_2\right],}^{\mbox{Term 1}}
\end{align}
where we invoke the boundednenss of \an{$\eta_k$ and $\e_k$} from above
	by $\bar \eta$ and $\e$, respectively (the latter follows from Assumption
\ref{assum:IRLSA}(c)). To complete the proof, it suffices to show that Term 1  
is nonpositive for some \an{$\theta\ge\theta_{\hat K}$.} By Assumption
\ref{assum:conv-exp}(a), we have $\left(\frac{1}{2}-\delta\right) >0$.
Therefore, if $$\theta \geq  \frac{2C^2 \e^2+4M^2\bar \eta^2\e^2 +16n^2C^2B_1
+4M^2B_2}{0.5-\delta},$$ then $\mbox{Term 1}$ is nonpositive.
Hence, $e_{k+1} \leq
\theta{\g_{k}}\slash({\eta_{k}\e_{k}^2})$ and, thus, the induction
argument is complete. In conclusion, when $\theta$ satisfies relation~(\ref{def:theta}), 
relation~(\ref{ineq:MSE-result}) holds for any $k \geq \bar K$.
\end{proof}

{The following proposition states that the~\ref{algorithm:IRLSA-impl} algorithm generates a sequence converging 
to the solution set of VI$(X,F)$ in a
mean-square sense.}
\begin{proposition}[{\bf Convergence in \fy{mean-squared} distance from solution set}]\label{prop:expectation}
Let Assumptions \ref{assum:step_error_sub_1},
	\ref{assum:step_error_sub_2}, \ref{assum:IRLSA}(a,b,c), and
	\ref{assum:conv-exp} hold. Also, assume that $\lim_{k\rightarrow
		\infty}{\g_k}\slash({\eta_k\e_k^2})=0$, and let $\{x_k\}$ be generated by the
			RSSA scheme. 
			Then, we have the following:\\
			(a)  The sequence $\{x_k\}$
			converges to the solution set $X^*$ of  VI$(X,F)$ in mean-squared sense, i.e.,
			$$\lim_{k\rightarrow
			\infty}\EXP{\hbox{dist}^2\big(x_{k},X^*\big)}=0.$$
			(b) If in addition $\lim_{k\to\infty}\frac{\e_k} {\eta_k}=0$
			and $F$ is differentiable at \an{the
			smallest norm solution $t^*\in X^*$
			with a bounded Jacobian
			in a neighborhood of $t^*$, then $\{x_k\}$ converges to  $t^*$} in mean-squared sense.
\end{proposition}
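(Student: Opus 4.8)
The plan is to decompose the distance from the iterate to the solution set into two pieces: a \emph{stochastic} piece measuring the gap between $x_{k+1}$ and the regularized smoothed point $s_k$, which is controlled by Proposition~\ref{prop:UB-MSE}, and a \emph{deterministic} piece measuring the gap between $s_k$ and $X^*$, which is controlled by Proposition~\ref{prop:sk_estimate}. The key observation is that $s_k$ is the solution of the deterministic problem VI$(X,F_k+\eta_k\mathbf{I})$ and hence carries no randomness, so only the first piece requires an expectation. The hypotheses of this proposition include Assumptions~\ref{assum:step_error_sub_1}, \ref{assum:step_error_sub_2}, \ref{assum:IRLSA}(a,b,c), and~\ref{assum:conv-exp}, which are precisely what is needed to invoke both of the preceding results.

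For part~(a), I would fix $k\ge\bar K$ and let $y_k\triangleq\Pi_{X^*}(s_k)$ be the projection of $s_k$ onto the (closed, convex) solution set $X^*$. The triangle inequality gives
\begin{align*}
\mathrm{dist}(x_{k+1},X^*)\le \|x_{k+1}-y_k\|\le \|x_{k+1}-s_k\|+\|s_k-y_k\|=\|x_{k+1}-s_k\|+\mathrm{dist}(s_k,X^*).
\end{align*}
Squaring, using $(a+b)^2\le 2a^2+2b^2$, and taking expectations (noting that $\mathrm{dist}(s_k,X^*)$ is deterministic) yields
\begin{align*}
\EXP{\mathrm{dist}^2(x_{k+1},X^*)}\le 2\,\EXP{\|x_{k+1}-s_k\|^2}+2\,\mathrm{dist}^2(s_k,X^*).
\end{align*}
The first term tends to zero by Proposition~\ref{prop:UB-MSE}, which gives $\EXP{\|x_{k+1}-s_k\|^2}\le\theta\,\g_k/(\eta_k\e_k^2)$, together with the standing hypothesis $\lim_k \g_k/(\eta_k\e_k^2)=0$. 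For the second term, I would argue that the bounded deterministic sequence $\{s_k\}$ (bounded since $X$ is bounded) satisfies $\mathrm{dist}(s_k,X^*)\to 0$: if not, some subsequence stays at distance at least $\epsilon>0$ from $X^*$, yet by compactness it admits a convergent sub-subsequence whose limit, by Proposition~\ref{prop:sk_estimate}(b1), lies in $X^*$, a contradiction. Combining the two limits gives $\EXP{\mathrm{dist}^2(x_{k+1},X^*)}\to 0$.

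For part~(b), the extra hypotheses $\lim_k \e_k/\eta_k=0$ and the local differentiability with bounded Jacobian of $F$ at $t^*$ are exactly those of Proposition~\ref{prop:sk_estimate}(b2), which upgrades the accumulation-point statement to $s_k\to t^*$. I would then repeat the decomposition with $X^*$ replaced by the singleton $\{t^*\}$: from $\|x_{k+1}-t^*\|\le\|x_{k+1}-s_k\|+\|s_k-t^*\|$ and squaring,
\begin{align*}
\EXP{\|x_{k+1}-t^*\|^2}\le 2\,\EXP{\|x_{k+1}-s_k\|^2}+2\,\|s_k-t^*\|^2,
\end{align*}
and both terms vanish, establishing mean-squared convergence to $t^*$. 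I expect the only non-mechanical step to be the passage in part~(a) from the accumulation-point conclusion of Proposition~\ref{prop:sk_estimate}(b1) to the stronger statement $\mathrm{dist}(s_k,X^*)\to 0$; this rests on the boundedness of $\{s_k\}$ and the closedness of $X^*$ (which holds because the solution set of a continuous monotone VI over a closed convex set is closed and convex). Everything else is a triangle-inequality decomposition combined with the previously established mean-squared bound.
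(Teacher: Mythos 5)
Your proposal is correct and follows essentially the same route as the paper: the triangle-inequality decomposition $\EXP{\hbox{dist}^2(x_{k+1},X^*)}\leq 2\,\EXP{\|x_{k+1}-s_k\|^2}+2\,\hbox{dist}^2(s_k,X^*)$, with the first term killed by Proposition~\ref{prop:UB-MSE} and the hypothesis $\g_k/(\eta_k\e_k^2)\to0$, the second by Proposition~\ref{prop:sk_estimate}(b1), and part~(b) handled by replacing $X^*$ with $\{t^*\}$ via Proposition~\ref{prop:sk_estimate}(b2). Your compactness-and-contradiction argument upgrading the accumulation-point statement of (b1) to $\hbox{dist}(s_k,X^*)\to0$ is a welcome extra care: the paper asserts this step directly, and your justification (boundedness of $\{s_k\}$ plus closedness of $X^*$) is precisely what makes it rigorous.
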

\begin{proof} 
{To show part (a), 
	using the triangle inequality and by recalling that $(a+b)^2
			\leq 2a^2+2b^2$ for any $a,b \in \Real$, we estimate 
$\hbox{dist}^2\big(x_{k+1},X^*\big)$ from above,} as follows:
\begin{align*}
\hbox{dist}^2\big(x_{k+1},X^*\big) &\leq
\left(\hbox{dist}\big(x_{k+1},s_k\big) +\hbox{dist}\big(s_k,\us{X^*}\big)\right)^2 
\leq 2\hbox{dist}^2\big(x_{k+1},s_k\big)
	+2\hbox{dist}^2\big(s_k,\us{X^*}\big).
\end{align*}
Taking expectations in the preceding relation, we obtain that
\begin{align}\label{ineq:conv-mean-triangle}
\EXP{\hbox{dist}^2\big(x_{k+1},\us{X^*}\big)} \leq
2\EXP{\|x_{k+1}-s_{k}\|^2} +2\hbox{dist}^2\big(s_k,\us{X^*}\big).
\end{align}
Note that \us{in the inequality above}, the term $\hbox{dist}^2\big(s_k,\us{X^*}\big)$ is a
deterministic quantity \us{since $s_k$ is a (unique) solution to a
	deterministic problem. By Proposition~\ref{prop:UB-MSE}, there
		exists a finite constant $\theta >0$ such that
		$\EXP{\|x_{k+1}-s_k\|^2}\leq \theta
		{\g_k}\slash({\eta_k\e_k^2})$}. Therefore, from (\ref{ineq:conv-mean-triangle}) we obtain
\begin{align}\label{ineq:conv-mean-triangle2}
\EXP{\hbox{dist}^2\big(x_{k+1},\us{X^*}\big)} \leq2\theta
\frac{\g_k}{\eta_k\e_k^2} +2\hbox{dist}^2\big(s_k,\us{X^*}\big).
\end{align} 
Proposition \ref{prop:sk_estimate}(b1) indicates that the term
$\hbox{dist}^2\big(s_k,\us{X^*}\big)$ goes to zero as $k \to
	\infty$. Since $\lim_{k\rightarrow
		\infty}{\g_k}\slash({\eta_k\epsilon_k^2})=0$, from relation
		\eqref{ineq:conv-mean-triangle2}, we conclude that the term
		$\EXP{\hbox{dist}^2\big(x_{k+1},{X^*}\big)}$ goes to
		zero as $k\to\infty$. 
		
		{The result in part (b) follows similarly to the preceding analysis, wherein we replace
		$\hbox{dist}\big(x_{k+1},X^*\big)$ and $\hbox{dist}^2\big(s_{k},X^*\big)$, respectively, 
		by $\|x_{k+1}-t^*\|^2$ and $\|s_k-t^*\|^2$ with $t^*$ being the smallest norm solution, 
		and by invoking Proposition \ref{prop:sk_estimate}(b2).}
\end{proof}

As a counterpart of Lemma \ref{lemma:stepsize_a.s.}, the following
	result presents a class of the stepsize, regularization, and
		smoothing sequences that ensure  mean-square convergence.
\begin{lemma}\label{lemma:stepsize_exp}
Suppose sequences $\{\g_k\}$, $\{\eta_k\}$, and $\{\e_k\}$ are given by 
$\g_k=\g_0(k+1)^{-a}$, $\eta_k=\eta_0(k+1)^{-b}$, and $\e_k=\e_0(k+1)^{-c}$ where $a$, $b$, and $c$ satisfy the following conditions:
\begin{align*}
a,b,c>0, \quad a+b < 1 \quad a+b\leq \frac{2}{3}(1+c), \quad a > b+2c,
\end{align*}
$\g_0$, $\eta_0$, $\e_0$ are positive scalars and $\e_0 \leq \e$. Then,
	sequences $\{\g_k\}$, $\{\eta_k\}$, and $\{\e_k\}$ satisfy
	Assumption~\ref{assum:conv-exp} and $\lim_{k\rightarrow
		\infty}{\g_k}\slash({\eta_k\e_k^2})=0$. 
		{If in addition $b<c$, then $\lim_{k\to\infty}\frac{\e_k}{\eta_k}=0$.}
\end{lemma}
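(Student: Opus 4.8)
The plan is to verify each clause of Assumption~\ref{assum:conv-exp} together with the two limit statements purely by substituting the power-law forms and reducing everything to elementary exponent arithmetic, reusing the estimates $1-\frac{\e_k}{\e_{k-1}}={\cal O}(k^{-1})$ and $1-\frac{\eta_k}{\eta_{k-1}}={\cal O}(k^{-1})$ already derived in the proof of Lemma~\ref{lemma:stepsize_a.s.}(f,g). Throughout I would abbreviate $\phi_k\triangleq\frac{\g_k}{\eta_k\e_k^2}=\frac{\g_0}{\eta_0\e_0^2}(k+1)^{-(a-b-2c)}$, since this composite ratio governs both the limit claim and clause (a).

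I would dispatch the two limits first, as they are immediate. Because $a>b+2c$ the exponent $-(a-b-2c)$ is negative, so $\phi_k\to0$, which is exactly $\lim_k \g_k/(\eta_k\e_k^2)=0$. Similarly $\frac{\e_k}{\eta_k}=\frac{\e_0}{\eta_0}(k+1)^{-(c-b)}$, so the extra hypothesis $b<c$ makes $c-b>0$ and forces $\frac{\e_k}{\eta_k}\to0$.

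For Assumption~\ref{assum:conv-exp}(a) I would rephrase the requirement as $\frac{\phi_k}{\phi_{k+1}}\le 1+\delta\,\eta_{k+1}\g_{k+1}$. Here $\frac{\phi_k}{\phi_{k+1}}=\left(1+\frac{1}{k+1}\right)^{a-b-2c}$, and a first-order binomial expansion (as in Lemma~\ref{lemma:stepsize_a.s.}(f)) gives $\frac{\phi_k}{\phi_{k+1}}-1={\cal O}(k^{-1})$. On the other side $\eta_{k+1}\g_{k+1}=\eta_0\g_0(k+2)^{-(a+b)}$, so $\delta\,\eta_{k+1}\g_{k+1}=\Theta(k^{-(a+b)})$. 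The condition $a+b<1$ makes the right-hand term decay strictly slower than $k^{-1}$, i.e. $k^{1-(a+b)}\eta_{k+1}\g_{k+1}\to\infty$, so for large $k$ the $\Theta(k^{-(a+b)})$ term dominates the ${\cal O}(k^{-1})$ gap and the inequality holds beyond some $K_3$ for any fixed $\delta\in(0,1/2)$. I expect this ratio comparison to be the one genuinely delicate point: everywhere else we merely need boundedness of a sequence, whereas here we must trade the ${\cal O}(k^{-1})$ coming from the smoothing/regularization ratio against the slower-decaying step–regularization product, and it is precisely $a+b<1$ that secures the trade.

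For clauses (b) and (c) I would only collect exponents. Using $\left(1-\frac{\e_k}{\e_{k-1}}\right)^2={\cal O}(k^{-2})$ and $\eta_{k-1}=\eta_0 k^{-b}$, the left side of (b) equals $\Theta\!\big(k^{\,3a+3b-2c}\big)\cdot{\cal O}(k^{-2})={\cal O}\!\big(k^{\,3(a+b)-2c-2}\big)$; the hypothesis $a+b\le\frac23(1+c)$ is exactly $3(a+b)-2c-2\le0$, so the tail is bounded, and since each of the finitely many remaining terms with $k\ge1$ is finite, the supremum furnishes the constant $B_1$. In the same way the left side of (c) is $\Theta\!\big(k^{\,3a+b-2c}\big)\cdot{\cal O}(k^{-2})={\cal O}\!\big(k^{\,3a+b-2c-2}\big)$, and since $b>0$ yields $3a+b<3(a+b)\le 2+2c$, the exponent is negative, the tail is bounded, and the supremum provides $B_2$. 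Collecting (a)–(c) with the two limits established above completes the proof.
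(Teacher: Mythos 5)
Your proposal is correct and follows essentially the same route as the paper: the one condition the paper proves in detail, Assumption~\ref{assum:conv-exp}(a), is handled identically (Taylor expansion of $(1+1/(k+1))^{a-b-2c}$ giving an ${\cal O}(k^{-1})$ gap, dominated for large $k$ by $\delta\eta_k\g_k=\Theta(k^{-(a+b)})$ precisely because $a+b<1$). The remaining clauses (b), (c) and the two limits, which the paper dismisses as "similar to Lemma~\ref{lemma:stepsize_a.s.}," are verified by your exponent bookkeeping exactly as intended, with $a+b\leq\frac{2}{3}(1+c)$ correctly identified as the condition $3(a+b)-2c-2\leq 0$ for clause (b) and $b>0$ yielding the strict inequality for clause (c).
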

\begin{proof} 
The proof of this Lemma can be \us{carried out} in a
	similar vein to Lemma \ref{lemma:stepsize_a.s.}. 
	We only show that \an{the conidtion 4}(a) is satisfied. Equivalently, we need to show that there exist $0<\delta<0.5$ and $K_3\geq 0$ such that
\begin{align}\label{ineq:assump-d-equiv}
\mbox{Term 1}\triangleq \left(\frac{\g_{k-1}}{\eta_{k-1}\e_{k-1}^2}\right)\left(\frac{\eta_{k}\e_{k}^2}{\g_{k}}\right) -1\leq \delta \eta_{k}\g_{k}, \qquad \hbox{for any }k > K_3.
\end{align}
 Substituting the sequences $\{\g_k\}$, $\{\eta_k\}$, and $\{\e_k\}$ by their rules we obtain
\[\mbox{Term 1}=\fyRev{\left(\frac{\g_0{k}^{-a}}{\eta_0{k}^{-b}\e_0{k}^{-2c}}\right)\left(\frac{\eta_0(k+1)^{-b}\e_0^2(k+1)^{-2c}}{\g_0(k+1)^{-a}}\right)-1\left(1+\frac{1}{k}\right)^{a-b-2c}-1}.\]
Using the Taylor expansion for $(1+x)^p$ where $x=\frac{1}{k}$ and
$p=a-b-2c$, it can be shown that $\mbox{Term 1}={\cal O}(k^{-1})$.
Suppose $\delta$ is an arbitrary scalar in $(0,0.5)$. Multiplying and dividing by $\delta\g_k\eta_k$, we obtain
\begin{align}\label{ineq:assump-d-equiv-2}
\mbox{Term 1}=\delta\g_k\eta_k\frac{{\cal
	O}(k^{-1})}{\delta\g_k\eta_k}=\delta\g_k\eta_k\frac{{\cal
		O}(k^{-1})}{\delta\eta_0\g_0(k+1)^{-a-b}}=\delta\g_k\eta_k {\cal
			O}(k^{-(1-a-b)}).
\end{align}
Note that $a+b<1$. Therefore, ${\cal O}(k^{-(1-a-b)}) \rightarrow 0$
	when $k \rightarrow 0$. This implies that there exists some
	nonnegative number $K_3$ such that for any $k >K_3$, ${\cal O}(k^{-(1-a-b)})\leq 1$.  
	From (\ref{ineq:assump-d-equiv-2}) we obtain $\mbox{Term 1} \leq \delta\g_k\eta_k$ for any $k >K_3$. 
	Hence, we conclude that relation (\ref{ineq:assump-d-equiv}) holds implying that the \an{condition 4(a)} is satisfied. 
\end{proof}

\begin{remark}
Figure \ref{fig:abc} shows the feasible ranges for parameters $a,b,$ and
$c$ when $\g_k=\g_0(k+1)^{-a}$, $\eta_k=\eta_0(k+1)^{-b}$, and
$\e_k=\e_0(k+1)^{-c}$. Figure \ref{fig:abc}(a) represents the feasible set
of these parameters for which the almost sure convergence is guaranteed,
and Figure \ref{fig:abc}(b) shows the set for the mean-square
convergence. We observe that each set is relatively large. Note
that the two sets are distinct with a nonempty
	intersection. This corresponds well with theory in that almost-sure
convergence and convergence in mean-square are not equivalent. 
\end{remark}

We conclude this section by noting that our rate statement is not
	altogether satisfactory in that we do not relate $x_k$ to $X^*$. To
allow for precisely such a statement, we consider an averaging framework
in the next section.
\begin{figure}[htb]
\centering
\captionsetup{justification=centering}
 \subfloat[Almost sure convergence]{\label{fig:originalf}\includegraphics[scale=.5]{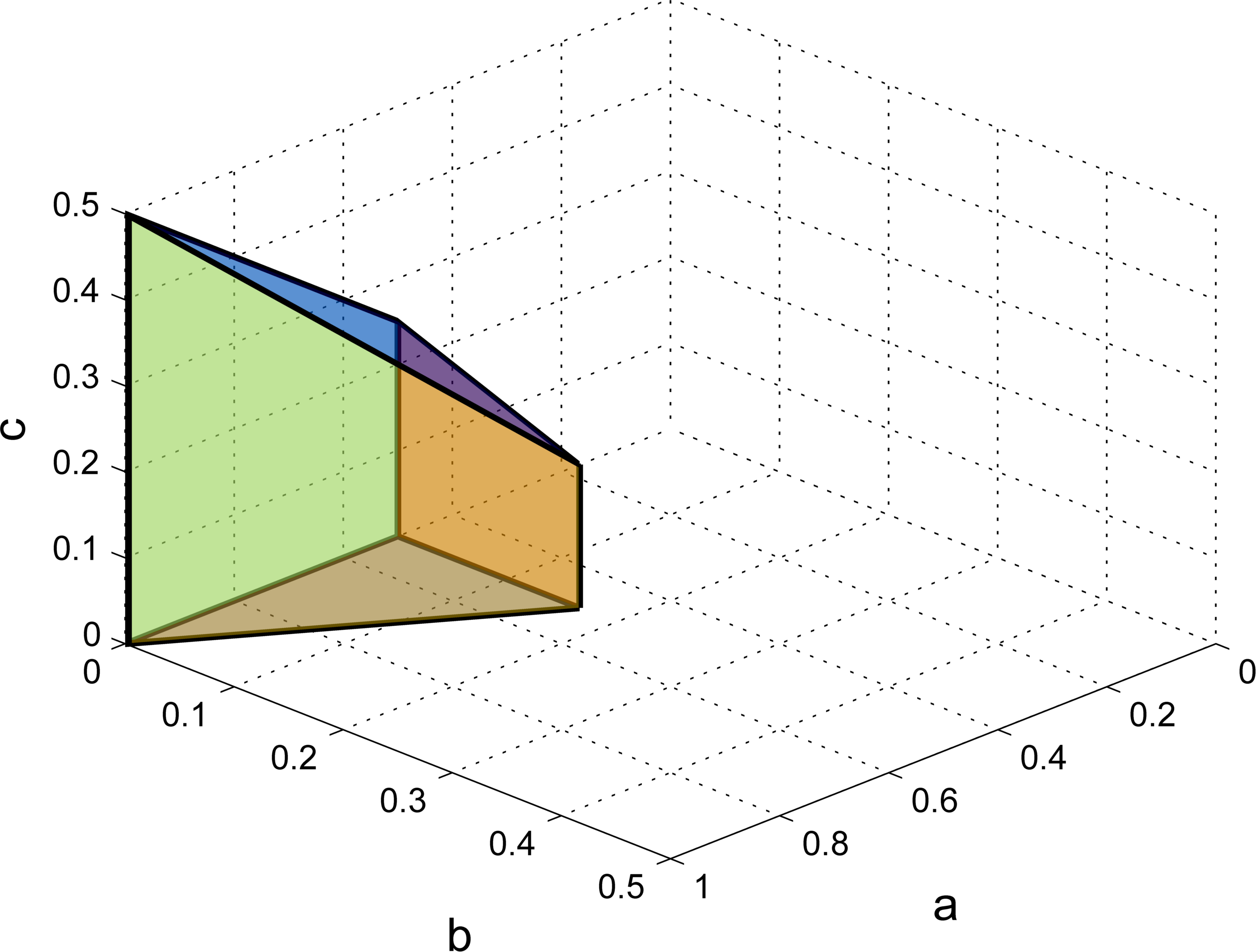}}
 \subfloat[Convergence in mean]{\label{fig:epsChange}\includegraphics[scale=.5]{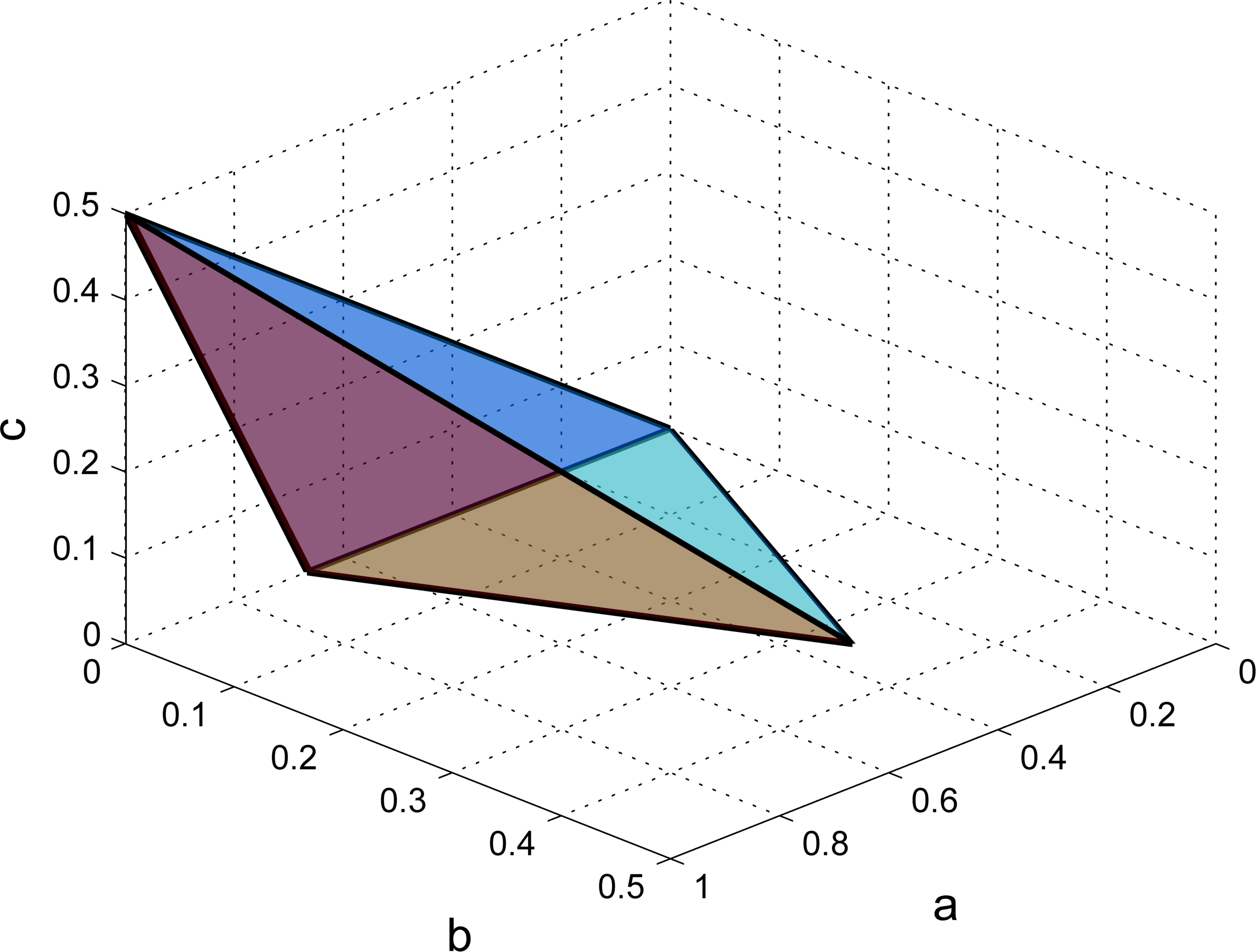}}
\caption{\small Feasible ranges for $a$, $b$, and $c$ when $\g_k=\g_0(k+1)^{-a}$, $\eta_k=\eta_0(k+1)^{-b}$, and 
$\e_k=\e_0(k+1)k^{-c}$. 
}
\label{fig:abc}
\end{figure}

\section{Rate of convergence analysis under weighted averaging}\label{sec:ave}
In this part, our interest is in analyzing
the convergence and deriving rate statements for the averaged sequences
associated with the RSSA scheme. It should be emphasized that while the
underlying algorithm does not change in any way, the extracted sequence differs
in that it is a weighted average of the sequence generated by the
original scheme. The \ref{algorithm:IRLSA-averaging} scheme \uvs{generalizes} the
classical stochastic approximation method with averaging in two
directions:\\
\noindent {\em Weighted averaging:} In the {\ref{algorithm:IRLSA-averaging} algorithm}, the iterates
$\bar x_k(r)$ are defined as the weighted average of $x_0, x_1,
\ldots, x_k$ with the corresponding weights ${
\gamma_0^r}\slash({\sum_{t=0}^k \gamma_t^r}), {
	\gamma_1^r}\slash({\sum_{t=0}^k \gamma_t^r}), \dots,{
		\gamma_k^r}\slash({\sum_{t=0}^k \gamma_t^r})$. Note that when
		the stepsizes are decreasing, for $r>0$ these weights are also
		decreasing, while for $r<0$, the weights are increasing. When
		$r=0$, $\bar x_k(r)$ represents the average of $x_0, x_1,
		\ldots, x_k$ with equal weights $\frac{1}{k}$. By allowing $r$
		to be an arbitrary number, we analyze the
		convergence rate of a class of averaging schemes. In fact, we
		will see that the choice of $r$ will affect the rate of
		convergence of a suitably defined gap function. \\
\noindent {\em Regularization and smoothing:} Similar to the first part
of the paper, in the {\ref{algorithm:IRLSA-averaging} scheme}, we {employ} 
regularization and randomized smoothing. Using this generalization, we present
almost sure convergence results for the {\ref{algorithm:IRLSA-averaging} scheme (Proposition~\ref{prop:ave-all-acc})} for {the
sequence $\{\bar x_k(r)\}$} to a solution of problem
(\ref{def:SVI}). \an{Moreover, we derive the convergence rate of the method in terms of 
the gap function values (Proposition~\ref{prop:as-rate})}. Note that, here we allow for the
case that $\{\eta_k\}$ and $\{\e_k\}$ are zero sequences (referred as aSA$_r$). In
that case, the aSA$_r$ algorithm represents the
classic stochastic approximation method utilizing the averaging
technique. 

In Section~\ref{sec:41}, we provide a brief background to gap
functions and derive relevant bounds. We prove the almost sure
convergence of the sequence derived from the aRSSA$_r$ scheme in
Section~\ref{sec:as-gap}. Finally, in Section~\ref{sec:rate-gap}, we show that the
expected gap function diminishes to zero at the optimal rate of ${\cal
	O}(1/\sqrt{k})$ and extend the result to window-based averaging. 

\subsection{An introduction to gap functions}\label{sec:41}
Unlike in optimization settings where the function value provides a
natural metric to measure progress \an{of an algorithm}, no such object naturally arises in the
context of variational inequality problems. \an{Gap functions have
emerged as the analog of the objective function for quantifying the
sub-optimality of a candidate solution} $x$ for the problem VI$(X,F)$. It may
be recalled that a function $g: X \to \Real \cup
\{-\infty,+\infty\}$ is a gap function if satisfies two properties: (i)
it is restricted in sign over $X$; and (ii) $g(x) = 0$ if and only if
$x$ solves VI$(X,F)$. If $g$ is a nonnegative function, then one may
obtain a solution to VI$(X,F)$ by minimizing the gap function over $X$.
A more expansive discussion on gap functions is provided by Larsson and
Patriksson~\cite{larsson94class}. We consider a gap function that has
found significant utility in the solution of monotone variational
inequality problems.
\begin{definition}[{\bf G function}]\label{def:gap1}
Let $X \subseteq \mathbb{R}^n$ be a nonempty and closed set, {and
let the mapping $F: X\rightarrow \mathbb{R}^n$ be} defined on the set $X$. 
{Define} the following function $\mbox{G}: X \rightarrow [0,+\infty)\cup \{+\infty\}$ 
\begin{align}\label{equ:gapf}
\mbox{G}(x) \triangleq \sup_{y \in X} F(y)^T(x-y)\qquad\hbox{for all }x\in X.
\end{align}
\end{definition}
Next, we present some properties of the described function. 
We make use of these relations in the convergence analysis of the scheme~\eqref{algorithm:IRLSA-averaging}.

\begin{definition}[{\bf Weak solution}]\label{def:weak}
Consider VI$(X,F)$ where the set $X \subseteq \mathbb{R}^n$ is nonempty, closed, and convex, and 
the mapping $F: X\rightarrow \mathbb{R}^n$ is defined on the set $X$. 
A vector $x^*_w \in X$ is {said to be} a weak solution to VI$(X,F)$ if we have 
\begin{align}\label{equ:weak}
F(y)^T(y-x^*_w) \geq 0, \qquad \hbox{for all } y \in X. 
\end{align}
We let $X^*_w$ denote the set of weak solutions to VI$(X,F)$.
\end{definition}
\begin{remark}
A weak solution is considered to be a counterpart of the \an{standard} solution of
VI$(X,F)$. \an{A solution of} VI$(X,F)$ is also referred to as a {\it strong solution}.
Note that when the mapping $F$ is monotone, any strong solution of VI$(X,F)$ is also a weak
solution, i.e., $X^* \subseteq X_w^*$. Moreover, when $F$ is continuous,
	it is known that $X^*_w \subseteq X^*$ (cf.~\cite{Nem11}). 
	
	Throughout the paper, since we assume both monotonicity and continuity of the mapping $F$, there is no distinction between a weak and a strong solution.
\end{remark}

\an{Now, we provide some properties of the function $\mbox{G}(x)$ 
in the following lemma, the proof of which is in the Appendix.}
\begin{lemma}[{\bf Properties of \us{$\mbox{G}(x)$}}]\label{lemma:gap-positive}
\an{For the function \fyRev{$\hbox{G}(x)$} given by Definition~\ref{def:gap1}, we have}:
\begin{itemize}
\item [(a)] The function $\mbox{G}(x)$ 
\us{is
	a gap function, i.e., it satisfies the following:} (i) 
	$\mbox{G}(x)$ is  nonnegative for any $x \in X$; and (ii) {$x \in
		X^*$} if and only if $\mbox{G}(x) = 0$. 
\item [(b)] Assume that the mapping $F$ is bounded over $X$, i.e.,
there exists a constant $C>0$ such that $\|F(x)\|\leq C$ for any $x \in
X$. Then, the following hold: (i)  $\hbox{G}(x)$ is continuous at
any $x \in X$; and (ii) {If $X$ is bounded, i.e., there exists a
constant $M>0$ such that $\|x\|\leq M$ for any $x \in X$, then \fyRev{$\hbox{G}(x)$} is also bounded over $X$: 
$\hbox{G}(x) \leq 2CM$ for all $x\in X$.}
\end{itemize}
\end{lemma}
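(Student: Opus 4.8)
The plan is to dispatch the four sub-claims in turn, exploiting the structure of $\mbox{G}(x)=\sup_{y\in X}F(y)^T(x-y)$ as a pointwise supremum of affine functions of $x$, together with the Minty--Stampacchia equivalence already recorded in the remark preceding the lemma. None of the pieces is deep; the only conceptual ingredient is that equivalence, which is already available, so I expect no genuine obstacle.

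For part (a)(i), nonnegativity is immediate: the admissible choice $y=x$ contributes $F(x)^T(x-x)=0$ to the supremum, so $\mbox{G}(x)\geq0$ for every $x\in X$. For part (a)(ii), I would argue through the weak-solution characterization. Since $\mbox{G}(x)\geq0$ always holds, the equation $\mbox{G}(x)=0$ is equivalent to $\mbox{G}(x)\leq0$, i.e.\ to $F(y)^T(x-y)\leq0$ for every $y\in X$, or equivalently $F(y)^T(y-x)\geq0$ for all $y\in X$. This is precisely the defining inequality~\eqref{equ:weak} of a weak solution, so $\mbox{G}(x)=0$ holds if and only if $x\in X^*_w$. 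Invoking the remark following Definition~\ref{def:weak}, where under Assumption~\ref{assum:step_error_sub_1} the map $F$ is monotone and continuous and $X$ is convex, so that $X^*_w=X^*$, then yields $\mbox{G}(x)=0\iff x\in X^*$, as claimed.

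For part (b), both statements follow from the Cauchy--Schwarz inequality and the bound $\|F(y)\|\leq C$. For continuity I would note that for any $x_1,x_2\in X$ and any $y\in X$,
\[
F(y)^T(x_1-y)=F(y)^T(x_2-y)+F(y)^T(x_1-x_2)\leq F(y)^T(x_2-y)+C\|x_1-x_2\|,
\]
and taking the supremum over $y$ gives $\mbox{G}(x_1)\leq\mbox{G}(x_2)+C\|x_1-x_2\|$; by symmetry $|\mbox{G}(x_1)-\mbox{G}(x_2)|\leq C\|x_1-x_2\|$, so $\mbox{G}$ is in fact Lipschitz, hence continuous. For the uniform bound I would estimate each summand directly via $F(y)^T(x-y)\leq\|F(y)\|\,\|x-y\|\leq C(\|x\|+\|y\|)\leq 2CM$, using $\|x\|,\|y\|\leq M$, and then pass to the supremum over $y$ to conclude $\mbox{G}(x)\leq 2CM$.

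The expected difficulty is negligible: parts (a)(i), (b)(i), and (b)(ii) are one-line estimates, and the single step on which the lemma really turns is the weak/strong equivalence in (a)(ii). Because that equivalence is supplied by the cited remark, the remaining work is purely the bookkeeping of supremum and Cauchy--Schwarz bounds sketched above.
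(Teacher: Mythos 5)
Your proof is correct, and three of its four pieces coincide with the paper's argument step for step: the choice $y=x$ for nonnegativity in (a)(i), the equivalence $\mbox{G}(x)=0 \iff x\in X^*_w$ in (a)(ii) followed by the remark identifying $X^*_w$ with $X^*$ under monotonicity and continuity, and the chain $F(y)^T(x-y)\le \|F(y)\|(\|x\|+\|y\|)\le 2CM$ for (b)(ii). The one genuine divergence is in (b)(i). The paper argues sequentially: it proves the one-sided estimate $\mbox{G}(u_k)\le \mbox{G}(u_0)+C\|u_k-u_0\|$ along a sequence $u_k\to u_0$ (yielding $\limsup_{k}\mbox{G}(u_k)\le \mbox{G}(u_0)$), and then runs a separate lower-semicontinuity argument, fixing $v\in X$, passing to the limit in $\mbox{G}(u_k)\ge F(v)^T(u_k-v)$, and taking the supremum over $v$ to conclude $\liminf_{k}\mbox{G}(u_k)\ge \mbox{G}(u_0)$. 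You instead symmetrize the same one-sided estimate to get $|\mbox{G}(x_1)-\mbox{G}(x_2)|\le C\|x_1-x_2\|$, which dispenses with the liminf step entirely and delivers a strictly stronger conclusion: $\mbox{G}$ is Lipschitz with constant $C$, not merely continuous. The only point to watch is that (b)(i) does not assume $X$ bounded, so $\mbox{G}$ may take the value $+\infty$; your inequality should then be read in the extended-real sense, where it shows $\mbox{G}$ is either finite and $C$-Lipschitz on all of $X$ or identically $+\infty$ --- a caveat that applies equally to the paper's own proof and does not affect correctness.
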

\subsection{Convergence analysis for the averaging schemes}\label{sec:as-gap}
Next, \uvs{we derive an upper bound for the expected gap function value for the
averaged sequence generated by the \ref{algorithm:IRLSA-averaging} scheme. We
begin with a basic relation for the forthcoming development}.

\begin{lemma}\label{lemma:gap-bounds} Consider problem (\ref{def:SVI}) and
let the sequence $\{\bar x_k(r)\}$ be generated by the \ref{algorithm:IRLSA-averaging} algorithm, where $\g_k>0$, $\e_k\geq0$ and
$\eta_k\geq 0$ for any $k \geq 0$, and  $r \in \Real$. Suppose that
{Assumptions~\ref{assum:step_error_sub_1} and~\ref{assum:step_error_sub_2} hold}.
Then, for any $k \geq 0$ and $y\in X$ the following relation holds:
\begin{align}\label{ineq:rate_proof_000}
& \g_k^rF(y)^T(x_k-y)
 \leq \frac{1}{2}\g_k^{r-1}\left(\|x_{k}-y\|^2+\|u_{k}-y\|^2\right)
-\frac{1}{2}\g_k^{r-1}\left(\|x_{k+1}-y\|^2+\|u_{k+1}-y\|^2\right)
 \cr &\qquad
 +\g_k^r\left(2\e_kC+\eta_kM^2+w_k^T(u_k-x_k)+\frac{\g_k}{2}\|w_k\|^2 +\g_k\|\Phi(x_k+z_k,\xi_k)\|^2+\g_k\eta_k^2M^2\right).
\end{align}
\end{lemma}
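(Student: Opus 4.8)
The plan is to establish \eqref{ineq:rate_proof_000} as a \emph{pathwise} inequality, valid for every realization and every fixed $y\in X$, with no expectation taken; the expectation and the weighted telescoping over $k$ are deferred to the subsequent gap-function estimates. The derivation combines three ingredients: the monotonicity of $F$ together with the smoothing perturbation to replace $F(y)$ by the evaluated map $F(x_k+z_k)$; the standard projection (nonexpansiveness) inequality applied to the $x$-recursion; and a second projection inequality applied to an auxiliary ``noise'' sequence $\{u_k\}$, defined by $u_0=x_0$ and $u_{k+1}=\Pi_X(u_k+\g_k w_k)$, introduced precisely to isolate the zero-mean part of the stochastic error $w_k$.

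First I would pass from $F(y)$ to $F(x_k+z_k)$. Since $x_k\in X$, $y\in X\subseteq X^\e$ and $x_k+z_k\in X^\e$ (as $\|z_k\|\le\e_k\le\e$), monotonicity of $F$ on $X^\e$ gives $(F(x_k+z_k)-F(y))^T((x_k+z_k)-y)\ge0$; writing $(x_k+z_k)-y=(x_k-y)+z_k$ and using $\|F\|\le C$ on $X^\e$ (Remark~\ref{rem:C-M}) with $\|z_k\|\le\e_k$ yields $F(y)^T(x_k-y)\le F(x_k+z_k)^T(x_k-y)+2C\e_k$, producing the $2\e_kC$ term. Next, substituting $\Phi(x_k+z_k,\xi_k)=F(x_k+z_k)+w_k$, I would split $F(x_k+z_k)^T(x_k-y)=(\Phi(x_k+z_k,\xi_k)+\eta_kx_k)^T(x_k-y)-\eta_kx_k^T(x_k-y)-w_k^T(x_k-y)$. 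The projection inequality applied to $x_{k+1}=\Pi_X(x_k-\g_k(\Phi(x_k+z_k,\xi_k)+\eta_kx_k))$ controls the first summand, giving the telescoping part $\tfrac12\g_k^{-1}(\|x_k-y\|^2-\|x_{k+1}-y\|^2)$. The regularization term is bounded by $-\eta_kx_k^T(x_k-y)\le\eta_kx_k^Ty\le\eta_kM^2$ (discarding $-\eta_k\|x_k\|^2\le0$ and using $\|x_k\|,\|y\|\le M$), and the squared step by $\|\Phi(x_k+z_k,\xi_k)+\eta_kx_k\|^2\le2\|\Phi(x_k+z_k,\xi_k)\|^2+2\eta_k^2M^2$, which after multiplication by $\tfrac{\g_k}{2}$ supplies the $\g_k\|\Phi(x_k+z_k,\xi_k)\|^2$ and $\g_k\eta_k^2M^2$ terms.

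The remaining noise term $-w_k^T(x_k-y)$ is where the auxiliary sequence is essential. I would split $-w_k^T(x_k-y)=w_k^T(u_k-x_k)-w_k^T(u_k-y)$, retaining the first summand as the $w_k^T(u_k-x_k)$ term in the bound, and estimating the second via the projection inequality for $u_{k+1}=\Pi_X(u_k+\g_kw_k)$: from $\|u_{k+1}-y\|^2\le\|u_k-y\|^2+2\g_kw_k^T(u_k-y)+\g_k^2\|w_k\|^2$ one obtains $-w_k^T(u_k-y)\le\tfrac{1}{2\g_k}(\|u_k-y\|^2-\|u_{k+1}-y\|^2)+\tfrac{\g_k}{2}\|w_k\|^2$, yielding the $u$-telescoping and the $\tfrac{\g_k}{2}\|w_k\|^2$ term. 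Collecting all estimates and multiplying through by $\g_k^r>0$ gives exactly \eqref{ineq:rate_proof_000}. The main obstacle is bookkeeping rather than depth: one must choose the sign of the $u$-recursion correctly (the update $+\g_kw_k$, not $-\g_kw_k$) so that it is $-w_k^T(u_k-y)$ that gets upper-bounded, and one must keep the whole argument pathwise so that the retained cross term $w_k^T(u_k-x_k)$ survives. Since $u_k-x_k$ is $\sF_k$-measurable and $\EXP{w_k\mid\sF_k}=0$ (Lemma~\ref{lemma:bound-on-errors}), this term has zero conditional mean, which is exactly what makes the later almost-sure and rate statements for the gap function go through.
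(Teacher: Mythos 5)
Your proposal is correct and follows essentially the same route as the paper's proof: the same auxiliary sequence $u_{k+1}=\Pi_X(u_k+\g_k w_k)$ with the same splitting $w_k^T(y-x_k)=w_k^T(u_k-x_k)+w_k^T(y-u_k)$, the same use of monotonicity on $X^\e$ with $\|z_k\|\le\e_k$ to produce the $2\e_kC$ term, and the same bounds $\eta_k x_k^T y\le \eta_k M^2$ and $\|\Phi+\eta_k x_k\|^2\le 2\|\Phi\|^2+2\eta_k^2M^2$. The only difference is organizational (you bound $F(y)^T(x_k-y)$ directly, while the paper expands $\|x_{k+1}-y\|^2$ and rearranges at the end), which changes nothing of substance.
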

\begin{proof} {For any $y \in X$, the
non-expansivity property of the projection operator implies that}
\begin{align*} 
\|x_{k+1}-y\|^2&=\|\Pi_{X}(x_k-\g_k(F(x_k+z_k)+\eta_kx_k+w_k))-\Pi_{X}(y)\|^2 \cr 
&\leq \|x_k-\g_k(F(x_k+z_k)+\eta_kx_k+w_k)-y\|^2. 
\end{align*}
From the preceding relation, {by noting that $F(x_k+z_k)+w_k=\Phi(x_k+z_k,\xi_k)$}, we obtain 
\begin{align}\label{ineq:proofLemmaEightSecondIneq}
\|x_{k+1}-y\|^2&\leq \|x_{k}-y\|^2-2\g_k(F(x_k+z_k)+\eta_kx_k+w_k)^T(x_k-y) +\g_k^2\|F(x_k+z_k)+\eta_kx_k+w_k\|^2\cr 
&=\|x_{k}-y\|^2-2\g_kF(x_k+z_k)^T(x_k-y)-2\g_k\eta_kx_k^T(x_k-y)-2\g_kw_k^T(x_k-y)\cr &+\g_k^2\|\Phi(x_k+z_k,\xi_k)+\eta_kx_k\|^2\cr
&\leq \|x_{k}-y\|^2-2\g_kF(x_k+z_k)^T((x_k+z_k)-y) +2\g_kF(x_k+z_k)^Tz_k+2\g_k\eta_kx_k^Ty\cr
&-2\g_kw_k^T(x_k-y)+2\g_k^2\|\Phi(x_k+z_k,\xi_k)\|^2+2\g_k^2\eta_k^2\|x_k\|^2,
\end{align}
where in the last inequality, we added and subtracted the term 
$2\g_kF(x_k+z_k)^Tz_k$, {dropped the term $2\g_k\eta_kx_k^Tx_k$, and used \fyRev{$\|a+b\|^2\le 2\|a\|^2 + 2\|b\|^2$}
to estimate the term $\|\Phi(x_k+z_k,\xi_k)+\eta_kx_k\|^2$}. By using the Cauchy-Schwartz inequality,
	invoking Remark \ref{rem:C-M}, and \us{by recalling} $\|z_k\|\leq \e_k$, we obtain
\begin{align}\label{eq:oneo}
\|x_{k+1}-y\|^2
\leq & \|x_{k}-y\|^2-2\g_kF(x_k+z_k)^T((x_k+z_k)-y)+2\g_k\| F(x_k+z_k)\|\|z_k\|\cr 
&+2\g_k\eta_k\|x_k\|\|y\|-2\g_kw_k^T(x_k-y)+2\g_k^2\|\Phi(x_k+z_k,\xi_k)\|^2 +2\g_k^2\eta_k^2M^2\cr
\leq &\|x_{k}-y\|^2-2\g_k(F(x_k+z_k)-F(y))^T((x_k+z_k)-y)-2\g_kF(y)^T((x_k+z_k)-y)\cr
 &+2\g_k\e_kC+2\g_k\eta_kM^2+2\g_kw_k^T(y-x_k)+2\g_k^2\|\Phi(x_k+z_k,\xi_k)\|^2 +2\g_k^2\eta_k^2M^2\cr
 \leq & \|x_{k}-y\|^2-2\g_kF(y)^T(x_k-y) +{2\g_k\e_kC} + 2\g_k\e_kC+2\g_k\eta_kM^2\cr
 &+2\g_kw_k^T(y-x_k)+2\g_k^2\|\Phi(x_k+z_k,\xi_k)\|^2+2\g_k^2\eta_k^2M^2,
\end{align}
where in the second inequality, we add and subtract the term
$2\g_kF(y)^T((x_k+z_k)-y)$, {while in the last inequality we invoke the
Cauchy-Schwartz inequality to obtain $-2\g_kF(y)^Tz_k\le 2\g_k\e_kC$. 
In the last inequality, we also invoke} the monotonicity property of mapping $F$ on $X^\e$,
		 which implies that  the term
			 $-2\g_k(F(x_k+z_k)-F(y))^T((x_k+z_k)-y)$ in the second
			 inequality is nonpositive. 
			 
{We next define an auxiliary sequence} $u_{k+1}$ as 
\begin{align}\label{def:u_t}u_{k+1}=\Pi_X[u_k+\g_kw_k], \quad
	\hbox{for any $k\geq 0$},\end{align} 
where $u_0=x_0$. {By writing \fyRev{$w_k^T(y-x_k)=w_k^T(u_k-x_k)+w_k^T(y-u_k)$}, the  inequality~\eqref{eq:oneo} yields 
for all $y\in X$,}
\begin{align}\label{ineq:rate_proof_00}
\|x_{k+1}-y\|^2
&\leq \|x_{k}-y\|^2-2\g_kF(y)^T(x_k-y)
 +4\g_k\e_kC+2\g_k\eta_kM^2\cr
 &+2\g_kw_k^T(u_k-x_k) +2\g_kw_k^T(y-u_k)+2\g_k^2\|\Phi(x_k+z_k,\xi_k)\|^2+2\g_k^2\eta_k^2M^2.
\end{align}
Next, we estimate the term $2\g_kw_k^T(y-u_k)$ {by using~\eqref{def:u_t} to obtain for all $y\in X$,} 
 \begin{align*}
\|u_{k+1}-y\|^2&=\|\Pi_X[u_k+\g_kw_k]-\us{\Pi_X(y)}\|^2\leq \|u_k+\g_kw_k-y\|^2 \cr 
&= \|u_k-y\|^2+2\g_kw_k^T(u_k-y)+\g_k^2\|w_k\|^2.
\end{align*}
Therefore, we have
 $2\g_kw_k^T(y-u_k)  \leq \|u_k-y\|^2-\|u_{k+1}-y\|^2 +\g_k^2\|w_k\|^2.$
The preceding relation and (\ref{ineq:rate_proof_00}) imply that 
\begin{align*}
\|x_{k+1}-y\|^2
&\leq \|x_{k}-y\|^2-2\g_kF(y)^T(x_k-y)
 +4\g_k\e_kC+2\g_k\eta_kM^2+2\g_kw_k^T(u_k-x_k) \cr
 &+\|u_k-y\|^2-\|u_{k+1}-y\|^2+\g_k^2\|w_k\|^2+2\g_k^2\|\Phi(x_k+z_k,\xi_k)\|^2+2\g_k^2\eta_k^2M^2.
\end{align*}
Rearranging the terms and multiplying both sides of the preceding
inequality by $\g_k^{r-1}\slash 2$ for some constant $r \in \mathbb{R}$,
		   the required result follows for any $k\geq 0$
\begin{align*}
& \g_k^rF(y)^T(x_k-y)
\leq
\frac{1}{2}\g_k^{r-1}\left(\|x_{k}-y\|^2+\|u_{k}-y\|^2\right)
-\frac{1}{2}\g_k^{r-1}\left(\|x_{k+1}-y\|^2+\|u_{k+1}-y\|^2\right)\cr
 &\quad+\g_k^r\left(2\e_kC+\eta_kM^2+w_k^T(u_k-x_k) +\frac{1}{2}\g_k\|w_k\|^2
	+\g_k\|\Phi(x_k+z_k,\xi_k)\|^2+\g_k\eta_k^2M^2\right).
\end{align*}
\end{proof}

{Using Lemma~\ref{lemma:gap-bounds}, we next provide a generic bound for the average sequence with any $r\in\Real$.}
\begin{lemma}[Error bounds for gap function]\label{prop:gap-bounds} 
Consider problem (\ref{def:SVI}) and 
let the sequence $\{\bar x_k(r)\}$ be produced by the {\ref{algorithm:IRLSA-averaging} algorithm}, where $\g_k>0$, $\e_k\geq0$ and $\eta_k\geq 0$ for any $k \geq 0$, and  $r \in \Real$. Suppose that Assumptions~\ref{assum:step_error_sub_1} 
and~\ref{assum:step_error_sub_2} hold, {and assume that}  
the stepsize sequence $\{\g_k\}$ is non-increasing. 
Then, for any $N\geq 1$, 
   \begin{align}\label{ineq:gap-general-bound2}\EXP{ \hbox{G}(\bar
		   x_N(r))} & \leq
   \frac{4M^2(\g_{0}^{r-1}+\bko_r\g_{N-1}^{r-1})}{\sum_{k=0}^{N-1}\g_k^r}
		    + \frac{1}{\sum_{k=0}^{N-1}\g_k^r}\left(\sum_{k=0}^{N-1}\g_k^r(2\e_kC+\eta_kM^2+\frac{3}{2}\g_kC^2+\g_k\eta_k^2M^2)\right),
    \end{align}
 where $\bko_r=0$ when $r\geq 1$ and $\bko_r=1$ when $r<1$.
\end{lemma}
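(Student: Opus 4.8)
The plan is to sum the pointwise inequality of Lemma~\ref{lemma:gap-bounds} over $k=0,\dots,N-1$, normalize by $S_N\triangleq\sum_{k=0}^{N-1}\g_k^r$, recognize the resulting weighted combination of the $x_k$ as $\bar x_N(r)$, and only then pass to the supremum over $y$ and to the expectation. Summing the left-hand side gives $\sum_{k=0}^{N-1}\g_k^rF(y)^T(x_k-y)$, and after dividing by $S_N$ this equals $F(y)^T(\bar x_N(r)-y)$ since $\bar x_N(r)=\big(\sum_{k=0}^{N-1}\g_k^rx_k\big)/S_N$ and the weights $\g_k^r/S_N$ are nonnegative and sum to one. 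The terms $2\e_kC$, $\eta_kM^2$, and $\g_k\eta_k^2M^2$ carry through unchanged, so the crux is twofold: controlling the telescoping quadratic terms, and disposing of the stochastic terms.

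The main obstacle is the telescoping block $\tfrac12\sum_{k=0}^{N-1}\g_k^{r-1}(a_k-a_{k+1})$ with $a_k\triangleq\|x_k-y\|^2+\|u_k-y\|^2$, because the coefficients $\g_k^{r-1}$ vary with $k$ rather than being constant. I would treat it by summation by parts, rewriting it as $\tfrac12\big(\g_0^{r-1}a_0+\sum_{k=1}^{N-1}(\g_k^{r-1}-\g_{k-1}^{r-1})a_k-\g_{N-1}^{r-1}a_N\big)$ and discarding the nonpositive boundary term $-\tfrac12\g_{N-1}^{r-1}a_N$. Here I would use that $x_k,u_k,y\in X$ with $\|\cdot\|\le M$, so $a_k\le 8M^2$ uniformly in $y$ (and $a_0=2\|x_0-y\|^2\le8M^2$ since $u_0=x_0$). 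A sign analysis on $\g_k^{r-1}-\g_{k-1}^{r-1}$ driven by monotonicity of $\{\g_k\}$ then splits into two cases: for $r\ge1$ the exponent $r-1\ge0$ makes $\g_k^{r-1}-\g_{k-1}^{r-1}\le0$, so the whole middle sum is nonpositive and is dropped, yielding the bound $4M^2\g_0^{r-1}$ (hence $\bko_r=0$); for $r<1$ the differences are nonnegative and telescoping against $a_k\le8M^2$ produces a $4M^2\g_{N-1}^{r-1}$ contribution in addition to $4M^2\g_0^{r-1}$ (hence $\bko_r=1$). In both cases the resulting bound $4M^2(\g_0^{r-1}+\bko_r\g_{N-1}^{r-1})$ is independent of $y$, which is exactly what lets me interchange the order of $\sup_y$ and $\EXP{\cdot}$ in the last step.

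With the quadratic block bounded uniformly in $y$, the normalized pointwise inequality reads $F(y)^T(\bar x_N(r)-y)\le \tfrac{4M^2(\g_0^{r-1}+\bko_r\g_{N-1}^{r-1})}{S_N}+\tfrac1{S_N}\sum_{k=0}^{N-1}\g_k^r(\cdots)$ with a right-hand side free of $y$; taking $\sup_{y\in X}$ on the left and recalling that $\bar x_N(r)\in X$ turns the left-hand side into $\hbox{G}(\bar x_N(r))$. Finally I take expectations. The cross term vanishes because $u_k,x_k$ are $\sF_k$-measurable while $\EXP{w_k\mid\sF_k}=0$ (Lemma~\ref{lemma:bound-on-errors}), so $\EXP{w_k^T(u_k-x_k)}=\EXP{(u_k-x_k)^T\EXP{w_k\mid\sF_k}}=0$; and $\EXP{\|w_k\|^2}\le C^2$ together with Assumption~\ref{assum:step_error_sub_1}(c) giving $\EXP{\|\Phi(x_k+z_k,\xi_k)\|^2}\le C^2$ collapses $\tfrac{\g_k}{2}\|w_k\|^2+\g_k\|\Phi(x_k+z_k,\xi_k)\|^2$ to $\tfrac32\g_kC^2$ in expectation. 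Collecting these yields precisely~\eqref{ineq:gap-general-bound2}. I expect the summation-by-parts sign bookkeeping (and the resulting appearance of $\bko_r$) to be the only genuinely delicate point; the stochastic terms are routine given Lemma~\ref{lemma:bound-on-errors}.
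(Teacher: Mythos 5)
Your proposal is correct and takes essentially the same route as the paper's proof: both start from Lemma~\ref{lemma:gap-bounds}, control the variable-coefficient telescoping block via a case split on the sign of $r-1$ (producing exactly the $4M^2(\g_0^{r-1}+\bko_r\g_{N-1}^{r-1})$ term, uniformly in $y$, so that $\sup_{y\in X}$ can be taken before the expectation), and dispose of the stochastic terms via $\EXP{w_k^T(u_k-x_k)}=0$ and $\tfrac12\EXP{\|w_k\|^2}+\EXP{\|\Phi(x_k+z_k,\xi_k)\|^2}\le\tfrac32 C^2$ from Lemma~\ref{lemma:bound-on-errors} and Assumption~\ref{assum:step_error_sub_1}(c). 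The only difference is presentational: you organize the bookkeeping through a single Abel summation-by-parts identity with a sign analysis on $\g_k^{r-1}-\g_{k-1}^{r-1}$, whereas the paper manipulates each case separately (replacing $\g_k^{r-1}$ by $\g_{k+1}^{r-1}$ in the negative term when $r\ge1$, and adding and subtracting the shifted coefficient with the $k=0$ term handled separately when $r<1$) --- the resulting bounds coincide.
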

\begin{proof}
We consider the two cases depending on the value of $r$, {namely, $r\ge 1$ and $r<1$.}\\
\noindent \underline{Case of ${r \geq 1}$}: Let us assume that $r$ is an arbitrary
 fixed number such that $r \geq1$, \us{implying that $r-1 \geq 0$.
 Since $\{\g_k\}$ is assumed to be a non-increasing sequence, it
 follows}  that $\g_{k+1}^{r-1} \leq \g_k^{r-1}$. Consequently,
	 {by Lemma~\ref{lemma:gap-bounds} (cf.\ relation~\eqref{ineq:rate_proof_000}) we have for all $k\ge0$ and $y\in X$,}
 \begin{align*}
& \g_k^rF(y)^T(x_k-y)
\leq \frac{1}{2}\g_k^{r-1}\left(\|x_{k}-y\|^2+\|u_{k}-y\|^2\right)
- \frac{1}{2}\g_{k+1}^{r-1}\left(\|x_{k+1}-y\|^2+\|u_{k+1}-y\|^2\right)
 \cr &\qquad+\g_k^r\left(2\e_kC+\eta_kM^2+w_k^T(u_k-x_k) +\frac{1}{2}\g_k\|w_k\|^2
  +\g_k\|\Phi(x_k+z_k,\xi_k)\|^2 +\g_k\eta_k^2M^2\right).
\end{align*}
Summing the preceding inequality from $k=0$ to $N-1$, where $N \geq
	1$ is a fixed number, yields
 \begin{align*}
& \sum_{k=0}^{N-1}\g_k^rF(y)^T(x_k-y)  
\leq  \frac{1}{2}\g_0^{r-1}\left(\|x_{0}-y\|^2+\|u_{0}-y\|^2\right)
- \frac{1}{2} \g_{N}^{r-1}\left(\|x_{N}-y\|^2+\|u_{N}-y\|^2\right)\cr
&\quad +\sum_{k=0}^{N-1}\g_k^r\left(2\e_kC+\eta_kM^2+w_k^T(u_k-x_k)
		 +\frac{1}{2} \g_k\|w_k\|^2  +\g_k\|\Phi(x_k+z_k,\xi_k)\|^2 +\g_k\eta_k^2M^2\right)\cr
 &\leq
 \fyRev{4M^2\g_0^{r-1}}
 +\sum_{k=0}^{N-1}\g_k^r\left(2\e_kC+\eta_kM^2+w_k^T(u_k-x_k)
		 +\frac{1}{2} \g_k\|w_k\|^2  +\g_k\|\Phi(x_k+z_k,\xi_k)\|^2 +\g_k\eta_k^2M^2\right),
\end{align*}
where the second inequality is a consequence of noting that
$\|x_{0}-y\|^2 \leq 4M^2$ and $\|u_{0}-y\|^2 \leq 4M^2$, and the
	non-negativity of the sum $\|x_{N}-y\|^2+\|u_{N}-y\|^2$.  Since by the definition of  $\bar x_N(r)$ we have
	$$\bar x_N(r) =
			\sum_{k=0}^{N-1}\frac{\g_k^r}{\sum_{k=0}^{N-1}\g_k^r}x_k,$$ we obtain for all $y\in X$ and $N\ge1$,
\begin{align*}
 \left(\sum_{k=0}^{N-1}\g_k^r\right)F(y)^T(\bar x_N(r)-y) &
 \leq
 \fyRev{4M^2\g_0^{r-1}}+\sum_{k=0}^{N-1}\g_k^r\left(2\e_kC+\eta_kM^2+w_k^T(u_k-x_k)
		 \right. \cr & \left. \quad +\frac{1}{2}\g_k\|w_k\|^2+\g_k\|\Phi(x_k+z_k,\xi_k)\|^2+\g_k\eta_k^2M^2\right).
\end{align*}
Taking the supremum over the set $X$ with respect to $y$ and invoking
the definition of the gap function (Definition \ref{def:gap1}), we have
the following inequality:
 \begin{align*}
 \left(\sum_{k=0}^{N-1}\g_k^r\right)\hbox{G}(\bar x_N(r)) 
 &\leq  \fyRev{4M^2\g_0^{r-1}}+\sum_{k=0}^{N-1}\g_k^r\left(2\e_kC+\eta_kM^2 +\g_k\eta_k^2M^2\right)\cr 
 &+\sum_{k=0}^{N-1}\g_k^rw_k^T(u_k-x_k)
 +\sum_{k=0}^{N-1}\g_k^{r+1}\left(\frac{1}{2}\|w_k\|^2+\|\Phi(x_k+z_k,\xi_k)\|^2\right).
\end{align*}
By taking expectations on both sides of the preceding inequality, we obtain 
\begin{align}\label{ineq:13-0} 
 \left(\sum_{k=0}^{N-1}\g_k^r\right)\EXP{\hbox{G}(\bar x_N(r))}
 &\leq  \fyRev{4M^2\g_0^{r-1}}+\sum_{k=0}^{N-1}\g_k^r\left(2\e_kC+\eta_kM^2
		 +\g_k\eta_k^2M^2\right)+\underbrace{\sum_{k=0}^{N-1}\g_k^r\EXP{w_k^T(u_k-x_k)}}_{\hbox{Term
 1}}\notag \\
 &+\frac{1}{2}\underbrace{\sum_{k=0}^{N-1}\g_k^{r+1}\EXP{\|w_k\|^2}}_{\hbox{Term
 2}}+\sum_{k=0}^{N-1}\g_k^{r+1}\EXP{\|\Phi(x_k+z_k,\xi_k)\|^2}.
\end{align}
Next, we estimate Terms $1$ and $2$. The 
{\ref{algorithm:IRLSA-averaging} algorithm} and the definition of $u_k$ in
(\ref{def:u_t}) imply that $x_{k}$ and $u_k$ are both
$\sF_{k}$-measurable. Thus, the term $u_k-x_{k}$ is
$\sF_{k}$-measurable. Moreover, the definition of $w_k$ imply that $w_k$
is $\sF_{k+1}$-measurable.  Therefore, for any $k\geq 0$:
\begin{align*}
 \EXP{w_k^T(u_k-x_k)\mid \sF_{k}\cup\{z_k\}} =(u_k-x_k)^T\EXP{w_k  \mid \sF_{k}\cup\{z_k\}}=0,
\end{align*}
where in the last equality we have used Lemma~\ref{lemma:bound-on-errors}. 
Taking expectations in the preceding equation, we obtain
\begin{align}\label{ineq:13}
\EXP{w_k^T(u_k-x_k)}=0, \quad \hbox{for any } k \geq 0.
\end{align}	 
{Furthermore, by Lemma~\ref{lemma:bound-on-errors}, we also have $\EXP{\|w_k\|^2}\le C^2$ for all $k\ge0$.
By Assumption~\ref{assum:step_error_sub_1}(c)  it follows that $\EXP{\|\Phi(x_k+z_k,\xi_k)\|^2}\le C^2$.
Substituting the preceding two upper estimates and \eqref{ineq:13} in the inequality~\eqref{ineq:13-0}, we 
obtain the desired inequality}.\\
\noindent \underline{Case of ${r <1}$}: 
Let us assume that $r$ is an arbitrary
 fixed number such that $r<1$. Adding and subtracting the term
 \fyRev{$0.5\g_{k-1}^{r-1} (\|x_{k}-y\|^2+\|u_{k}-y\|^2)$} from the
 right-hand side of relation (\ref{ineq:rate_proof_000}), we obtain the
 following inequality:
 \begin{align*}
&\g_k^rF(y)^T(x_k-y)
 \leq \fyRev{\frac{1}{2}\g_{k-1}^{r-1}
 \left(\|x_{k}-y\|^2+\|u_{k}-y\|^2\right)}
 -\frac{1}{2}\g_{k}^{r-1}\left(\|x_{k+1}-y\|^2+\|u_{k+1}-y\|^2\right)\cr 
 &\quad+\fyRev{\underbrace{\frac{1}{2}\left(\g_k^{r-1}  -{\g_{k-1}^{r-1}}\right) \left(\|x_{k}-y\|^2+\|u_{k}-y\|^2\right)}_{\small{\hbox{Term }} 3}}\cr 
 &\quad +\g_k^r\left(2\e_kC+\eta_kM^2+w_k^T(u_k-x_k) +\frac{1}{2}\g_k\|w_k\|^2 +\g_k\|\Phi(x_k+z_k,\xi_k)\|^2
+\g_k\eta_k^2M^2\right).\end{align*}
 Since $1-r>0$ and $\{\g_k\}$ is non-increasing, the term
 $\g_{k}^{r-1}-\g_{k-1}^{r-1}$ is nonnegative. \us{Recall that we have}
 \fyRev{$\|x_{k}-y\|^2\leq 2\|x_{k}\|^2+2\|y\|^2\leq 4M^2$ and $\|u_{k}-y\|^2
 \leq 4M^2$}, \us{allowing us to claim that} 
\fyRev{$\hbox{Term }3 \leq 4M^2\left(\g_k^{r-1}  -{\g_{k-1}^{r-1}}\right)$.}
{By using these estimates and, then, taking the summations over the resulting inequality} from $k=1$ to $N-1$ for
a fixed value $N \geq 1$, 
{and dropping the non-positive terms $-0.5\g_{N-1}^{r-1}(\|x_{N}-y\|^2+\|u_{N}-y\|^2)$
and \fyRev{$-4M^2\g_{0}^{r-1}$},
we obtain the following relation for all $y\in X$ and $N\ge1$,}
 \begin{align}\label{eq:ooeo}
 & \sum_{k=1}^{N-1}\g_k^rF(y)^T(x_k-y) 
 \leq \fyRev{\frac{1}{2} \g_{0}^{r-1} \left(\|x_{1}-y\|^2+\|u_{1}-y\|^2\right)} +4M^2\g_{N-1}^{r-1}\cr 
 &\quad +\sum_{k=1}^{N-1}\g_k^r\left(2\e_kC+\eta_kM^2+w_k^T(u_k-x_k) + \frac{1}{2}\g_k\|w_k\|^2
		 +\g_k\|\Phi(x_k+z_k,\xi_k)\|^2 + \g_k\eta_k^2M^2\right).\qquad
\end{align}
Consider now inequality~\eqref{ineq:rate_proof_000} when $k=0$. 
{By adding
	the resulting inequality to relation~\eqref{eq:ooeo}, we obtain for all $y\in X$ and $N\ge 1$,}
  \begin{align*}
 & \sum_{k=0}^{N-1}\g_k^rF(y)^T(x_k-y) 
 \leq \fyRev{\frac{1}{2} \g_{0}^{r-1} \left(\|x_{0}-y\|^2+\|u_{0}-y\|^2\right)} +4M^2\g_{N-1}^{r-1}\cr 
 &\quad +\sum_{k=0}^{N-1}\g_k^r\left(2\e_kC+\eta_kM^2+w_k^T(u_k-x_k) + \frac{1}{2}\g_k\|w_k\|^2
		 +\g_k\|\Phi(x_k+z_k,\xi_k)\|^2 + \g_k\eta_k^2M^2\right).\qquad
\end{align*}
 The remainder of the proof can be carried out  in a similar
	 fashion to that of the preceding case ($r\ge1$).
Combining the results of both cases, we obtain the \an{stated}
	result.
\end{proof}
\fyRev{We make use of the following inequalities in our analysis, with proofs provided in the Appendix.
\begin{lemma}\label{lemma:ineqHarmonic}
For any scalar $\alpha$ and integers $\ell$ and $N$ where $0\leq \ell \leq N-1$, we have:
\begin{itemize}
\item [(a)] $\ln\left( \frac{N+1}{\ell+1}\right) \leq \sum_{k=\ell}^{N-1}\frac{1}{k+1} \leq \frac{1}{\ell+1}+\ln\left( \frac{N}{\ell+1}\right)$.
\item [(b)] $\frac{N^{\alpha+1}-(\ell+1)^{\alpha+1}}{\alpha+1}\leq \sum_{k=\ell}^{N-1}(k+1)^\alpha \leq (\ell+1)^\alpha+\frac{(N+1)^{\alpha+1}-(\ell+1)^{\alpha+1}}{\alpha+1}$ 
for any $\alpha \neq -1$.
\end{itemize}
\end{lemma}
}
We now proceed to show that the expected gap function diminishes to zero
as $k\to\infty$ under suitable assumptions on the various parameter
sequences. We also
		show that for  a specific class of stepsize sequences and in the
		absence of smoothing and regularization, the expected gap
		function converges to zero at the optimal rate.

\begin{lemma}[{\bf Convergence of the expected gap function values}]\label{lemma:gap-conv}
Consider problem (\ref{def:SVI}) and let sequence $\{ \bar x_k(r)\}$ be generated by 
the \ref{algorithm:IRLSA-averaging} algorithm. Suppose that Assumptions~\ref{assum:step_error_sub_1} and \ref{assum:step_error_sub_2} hold. {Also,  assume that 
the sequences $\{\g_k\}$, $\{\eta_k\}$, and $\{\e_k\}$ are} given by $\g_k=\g_0(k+1)^{-a}$, $\eta_k=\eta_0(k+1)^{-b}$, and 
$\e_k=\e_0(k+1)^{-c}$ with $\g_0>0$, $\eta_0 \geq 0$, $\e_0 \geq 0$. 
Then, for any $b,c>0$ and any $a$ and $r$ such that 
\begin{align*}
(a,r) \in \fyRev{\mathcal{S}\triangleq }\{(u,v)\mid 0<uv\leq 1 \hbox{ and }v\geq 1\} \cup  \{(u,v)\mid 0<u <1 \hbox{ and }v < 1\},
\end{align*}
the values \an{$\EXP{ \hbox{G}(\bar x_k(r))}$ converge} to zero as $k$ goes to infinity. 
\end{lemma}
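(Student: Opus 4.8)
The plan is to start from the explicit error bound in Lemma~\ref{prop:gap-bounds} (whose hypothesis that $\{\g_k\}$ is non-increasing is met here, since $a>0$), substitute the power laws $\g_k=\g_0(k+1)^{-a}$, $\eta_k=\eta_0(k+1)^{-b}$, $\e_k=\e_0(k+1)^{-c}$, and show that every term on the right-hand side vanishes as $N\to\infty$. The object that controls everything is the normalizer $S_N \triangleq \sum_{k=0}^{N-1}\g_k^r = \g_0^r\sum_{k=0}^{N-1}(k+1)^{-ar}$, so the first step is to pin down its growth order through $ar$. Over the admissible region we always have $ar\le 1$: in the branch $r\ge1$ the hypothesis gives $0<ar\le1$ directly, while in the branch $r<1$ the condition $0<a<1$ forces $ar<a<1$ when $r\in[0,1)$ and $ar<0$ when $r<0$. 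Hence $S_N\to\infty$ in all cases, growing like $N^{1-ar}$ when $ar<1$ and like $\ln N$ when $ar=1$. This divergence of $S_N$ is exactly what the rest of the argument rests on.

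Next I would dispose of the boundary term $4M^2(\g_0^{r-1}+\bko_r\g_{N-1}^{r-1})/S_N$. The constant piece $\g_0^{r-1}/S_N\to0$ immediately from $S_N\to\infty$. For $r\ge1$ we have $\bko_r=0$ and nothing else is needed. For $r<1$ we have $\bko_r=1$, and here $\g_{N-1}^{r-1}=\g_0^{r-1}N^{a(1-r)}$ in fact blows up; the point is that it blows up more slowly than $S_N=\Theta(N^{1-ar})$, since $\g_{N-1}^{r-1}/S_N=\Theta\!\left(N^{a(1-r)-(1-ar)}\right)=\Theta(N^{\,a-1})$ and $a<1$. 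This is the single place where the strict bound $a<1$ (rather than merely $ar\le1$) is genuinely used, so I would flag it as the delicate point of the boundary analysis.

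The remaining term is the weighted average $\frac{1}{S_N}\sum_{k=0}^{N-1}\g_k^r\big(2\e_kC+\eta_kM^2+\tfrac32\g_kC^2+\g_k\eta_k^2M^2\big)$. I would split it into its four sub-sums and treat each by a Toeplitz/Ces\`aro averaging argument: if $d_k>0$ with $\sum_{k=0}^{N-1}d_k\to\infty$ and $a_k\to0$, then $\big(\sum_{k=0}^{N-1}d_k a_k\big)/\big(\sum_{k=0}^{N-1}d_k\big)\to0$. Taking $d_k=\g_k^r$ (so that $\sum_k d_k=S_N\to\infty$ by the first step) and letting $a_k$ be, in turn, $2C\e_k$, $M^2\eta_k$, $\tfrac32C^2\g_k$, and $M^2\g_k\eta_k^2$, each of which tends to zero because $a,b,c>0$ make $\g_k,\eta_k,\e_k\to0$, shows that each sub-sum vanishes. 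Adding the four contributions to the already-handled boundary term yields $\EXP{\hbox{G}(\bar x_N(r))}\to0$.

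The main obstacle is bookkeeping within the case split: verifying $S_N\to\infty$ uniformly over the entire admissible $(a,r)$ region, including the logarithmic borderline $ar=1$ in the $r\ge1$ branch, and isolating the lone use of $a<1$ to control the divergent factor $\g_{N-1}^{r-1}$ when $r<1$. Once the divergence of $S_N$ is in hand, the Ces\`aro step for the averaged term is routine, as is the vanishing of the boundary contribution.
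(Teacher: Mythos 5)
Your proposal is correct, but it follows a genuinely different route from the paper in how it kills the averaged term. The paper's proof also starts from Lemma~\ref{prop:gap-bounds}, but then it introduces the explicit ratios (its $h_N,\ell_N,m_N,p_N$ for $r\ge1$ and $q_N,s_N,t_N,u_N,v_N$ for $r<1$) and bounds each one by sandwiching the sums $\sum_{k=0}^{N-1}(k+1)^{-\alpha}$ between integrals, which forces a case analysis on whether the exponents $ar$, $ar+c$, $ar+b$, $a(1+r)$ equal $1$ or not (with separate logarithmic borderline computations). You replace all of that machinery with a single Toeplitz/Ces\`aro observation: since $d_k=\g_k^r$ satisfies $\sum_{k=0}^{N-1} d_k = S_N \to\infty$ on the whole admissible region (including the $ar=1$ borderline, which only arises in the $r\ge1$ branch where $\bko_r=0$), and each of $\e_k$, $\eta_k$, $\g_k$, $\g_k\eta_k^2$ tends to zero because $a,b,c>0$, the weighted average vanishes with no exponent bookkeeping at all. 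This is essentially the scalar version of the paper's own Lemma~\ref{lemma:cesaro}, which the paper proves in the appendix but deploys only later (for Proposition~\ref{prop:ave-all-acc}), not here. Your treatment of the boundary term agrees with the paper's: for $r<1$ the divergent factor $\g_{N-1}^{r-1}=\Theta(N^{a(1-r)})$ is dominated by $S_N=\Theta(N^{1-ar})$ precisely because $\g_{N-1}^{r-1}/S_N=\Theta(N^{a-1})$ and $a<1$, and you correctly identify this as the one place where $a<1$ (rather than just $ar\le1$) is essential --- this is exactly the role of the paper's term $v_N$. The trade-off: your argument is shorter and handles all subcases uniformly, but it yields only qualitative convergence, whereas the paper's integral estimates produce explicit decay orders like ${\cal O}(N^{-c})$, ${\cal O}(N^{-a})$, ${\cal O}(N^{-(1-a)})$ for each term, and these quantitative bounds are reused verbatim in the subsequent rate results (Propositions~\ref{prop:as-rate} and~\ref{prop:optimal-rate}). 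So your proof suffices for this lemma as stated, but the paper's heavier computation is an investment in what follows.
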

\begin{proof} 
{We consider the two cases corresponding to the two sets defining the range of $(a,r)$}.\\
(1) \ {Assume that ${(a,r) \in \{(u,v)\mid 0<uv\leq 1 \hbox{
	and }v\geq 1\}}$}. In this case we have $r \geq 1$. Since
	$\g_k={\g_0}{(k+1)^{-a}}$ is a non-increasing sequence, the
	conditions of \fyRev{Lemma \ref{prop:gap-bounds}} hold. We show that when $0<ar \leq 1$, 
	the \an{values $\EXP{ \hbox{G}(\bar x_k(r))}$ converge} to zero. 
	Note that $\eta_k \leq \eta_0$ implies that $\eta_k^2 \leq \eta_0^2$. Let us define \fyRev{$M^*\triangleq 2\max\{\eta_0^2M^2,1.5C^2\}$}. From relation (\ref{ineq:gap-general-bound2}) for $r \geq 1$ we have 
   \begin{align}\label{ineq:gap-conv1}\EXP{ \hbox{G}(\bar x_N(r))} \leq  \frac{1}{\sum_{k=0}^{N-1}\g_k^r}\left( 4\g_{0}^{r-1}M^2+\sum_{k=0}^{N-1}\g_k^r(2\e_kC+\eta_kM^2+M^*\g_k)\right).
    \end{align}
Let us define the following terms:
\begin{align}\label{def:terms}& \us{h_N} \triangleq
\left(\sum_{k=0}^{N-1}(k+1)^{-ar}\right)^{-1}, \qquad \us{\ell_N
	\triangleq } \fyRev{\e_0}\frac{\sum_{k=0}^{N-1}(k+1)^{-(ar+c)}}{\sum_{k=0}^{N-1}(k+1)^{-ar}}, \cr  &
\us{m_N \triangleq } \eta_0\frac{\sum_{k=0}^{N-1}(k+1)^{-(ar+b)}}{\sum_{k=0}^{N-1}(k+1)^{-ar}},
	\qquad \us{p_N} \triangleq \frac{\sum_{k=0}^{N-1}(k+1)^{-(ar+a)}}{\sum_{k=0}^{N-1}(k+1)^{-ar}}.
\end{align}  
Therefore, relation (\ref{ineq:gap-conv1}) implies that 
   \begin{align}\label{ineq:gap-conv2}\us{\EXP{ \hbox{G}(\bar x_N(r))} \leq
   4\us{\g_0^{-1}}M^2\us{h_N} +2C \ell_N +M^2m_N + \g_0M^*p_N.}
   \end{align}     
To show that \an{$\EXP{ \hbox{G}(\bar x_k(r))}$}  goes to zero, it is enough
to prove that \uvs{the terms} $h_N,\ell_N, m_N,$ and $p_N$ approach 
zero as $N\to\infty$. Since we assumed that $0<ar \leq 1 $,  $h_N$ goes to
zero \us{as $N$ tends to $+\infty$}. 

In the following, we show that $\us{\lim_{N\rightarrow
	\infty}\ell_N=0}$. If $\e_0=0$, then $\ell_N=0$ for all $N$. Otherwise, consider the following cases:\\
(i) {\it Case $ar \neq 1$ and $ar+c\neq1$}: \fyRev{From Lemma
\ref{lemma:ineqHarmonic} we obtain the following: }
\fyRev{\begin{align*}
	 0 \leq \ell_N \leq \e_0\frac{1+\frac{(N+1)^{1-(ar+c)}
		-1}{1-(ar+c)}}{\frac{N^{1-(ar)} -1}{1-(ar)}}\triangleq \hbox{Term }1.
\end{align*} 
Note that $c>0$ and also $ar<1$ since $(a,r) \in \mathcal{S}$ and we assumed $ar\neq 1$. If $ar < ar+c<1$, then we have Term $1=\mathcal{O}(N^{-c})$. If $ar<1<ar+c$, then Term $1=\mathcal{O}(N^{ar-1})$. In both cases,  $\lim_{N\rightarrow \infty}\ell_N=0$.}\\
(ii) {\it Case $ar\neq1$ and $ar+c=1$}: Since $c>0$, we have
$ar<1$. \fyRev{Lemma
\ref{lemma:ineqHarmonic} yields the following:}
\fyRev{\begin{align*}
 0 \leq \ell_N  \leq
	\e_0\frac{1+\ln(N)}{\frac{N^{1-(ar)} -1}{1-(ar)}} 
	\implies    \lim_{N\rightarrow \infty}{\ell_N} =\lim_{N\rightarrow \infty}\frac{\ln(N)}{N^{1-ar}} =
		\lim_{N\rightarrow \infty}\frac{\ln(N)}{N^{c}} =0.
\end{align*}}\\
(iii) {\it Case $ar=1$ and $ar+c\neq1$}: \fyRev{Since $c>0$, we have
$ar+c>1$, and Lemma
\ref{lemma:ineqHarmonic} implies}
\fyRev{\begin{align*}
& 0 \leq \ell_N \leq \e_0
	\frac{1+\frac{(N+1)^{1-(ar+c)} -1}{1-(ar+c)}}{\ln(N+1)}=\e_0
	\frac{1+\frac{(N+1)^{-c} -1}{-c}}{\ln(N+1)} \implies  \lim_{N\rightarrow \infty}\ell_N = \lim_{N\rightarrow \infty}  \frac{1+c-(N+1)^{-c}}{\ln(N+1)}=0.
\end{align*}
}

In conclusion, when $r\geq 1$, when $0<ar \leq 1$, we have
$\lim_{N\rightarrow \infty}\ell_N=0$. \us{A similar limit can be derived 
for $m_N$ and $p_N$}. Therefore, using relation (\ref{ineq:gap-conv2}) we conclude that $\lim_{N\rightarrow \infty}\EXP{ \hbox{G}(\bar x_N(r))}=0$.\\
\noindent (2) \ {Assume now that ${(a,r) \in\{(u,v)\mid 0<u <1 \hbox{ and }v < 1\}}$.} In this case $r<1$. From relation (\ref{ineq:gap-general-bound2}) and the definition of $M^*$ in the first part of this proof, we have 
 \begin{align}\label{ineq:gap-conv4}\EXP{ \hbox{G}(\bar x_N(r))} \leq  \frac{1}{\sum_{k=0}^{N-1}\g_k^r}\left(\frac{4M^2}{\g_{0}^{1-r}}+\frac{ 4M^2}{\g_{N-1}^{1-r}}+\sum_{k=0}^{N-1}\g_k^r(2\e_kC+\eta_kM^2+\g_kM^*)\right).
    \end{align}

 \fyRev{Consider the definitions in (\ref{def:terms}) and the following
	\begin{align}\label{def:terms0} 
v_N & \triangleq \frac{N^{a(1-r)}}{\sum_{k=0}^{N-1}(k+1)^{-ar}}.
\end{align}  }
 Relation (\ref{ineq:gap-conv4}) implies that 
 \fyRev{  \begin{align}\label{ineq:gap-conv6}\EXP{ \hbox{G}(\bar x_N(r))} &\leq
   \frac{4M^2}{\g_0}h_N +2C\ell_N +M^2m_N + \g_0M^*p_N+
   \frac{4M^2}{\g_0}v_N.
   \end{align}
Since in this case $ar \neq 1$, using Lemma \ref{lemma:ineqHarmonic} and the
 definition of ${v_N}$ and that $a<1$, we have
     \begin{align}\label{ineq:rate-term5} & 0 \leq  v_N \leq 
			  \frac{N^{a(1-r)}}{\frac{N^{1-(ar)} -1}{1-(ar)}} \implies  \lim_{N\rightarrow \infty} v_N =
		\lim_{N\rightarrow \infty}  N^{a-1}=0.    \end{align}  
		Since $r<1$ and $a>0$, we have $1\leq N^{a(1-r)}$
		implying that $h_N \leq v_N$ for all $N$. Therefore,
		$h_N$ tends to zero as $N \to \infty$. To show that $p_N$
		tends to zero as $N \to \infty$, we consider the following cases:\\
 (i) {\it Case $a(1+r)<1$}: Using Lemma \ref{lemma:ineqHarmonic}, \an{we obtain} 
   \begin{align*}
 &  0 \leq p_N \leq
	 \fyRev{\frac{1+\frac{(N+1)^{1-a(1+r)} -1}{1-a(1+r)}}{\frac{N^{1-(ar)}
		 -1}{1-ar}}} \implies   \lim_{N\rightarrow \infty}p_N =\lim_{N\rightarrow \infty}  \frac{(N+1)^{1-a(1+r)}}{N^{1-ar}} =\lim_{N\rightarrow \infty}  N^{-a}=0.\end{align*}\\
(ii) {\it Case $a(1+r)>1$}: 
Since $ar<1$, by Lemma \ref{lemma:ineqHarmonic}  we have
   \begin{align*}
 &  0 \leq p_N \leq
	 \fyRev{\frac{1+\frac{(N+1)^{1-a(1+r)} -1}{1-a(1+r)}}{\frac{N^{1-(ar)}
		 -1}{1-ar}}} \implies   \lim_{N\rightarrow \infty}p_N =\lim_{N\rightarrow \infty}  \frac{a(1+r)-(N+1)^{1-a(1+r)}}{N^{1-ar}-1} =\lim_{N\rightarrow \infty}  N^{ar-1}=0.\end{align*}	\\	 
(iii) {\it Case $a(1+r)=1$}: Since $ar<1$, using Lemma \ref{lemma:ineqHarmonic}  \an{we see that}
    \begin{align*}
&	0 \leq p_N \leq  \frac{1+\ln(N)}{\frac{N^{1-ar}
		-1}{1-(ar)}}\implies  \lim_{N\rightarrow \infty}p_N =\lim_{N\rightarrow
		\infty}\frac{\ln(N)}{N^{1-ar}}= 0.
	\end{align*}
In a similar fashion to the preceding analysis, one can show that
 $\lim_{N\rightarrow \infty} \ell_N =\lim_{N\rightarrow \infty} m_N=0$. Therefore, using relation (\ref{ineq:gap-conv4}) we conclude that
	 $\lim_{N\rightarrow \infty}\EXP{ \hbox{G}(\bar x_N(r))}=0$.}
\end{proof}

In the following, we analyze the  convergence of the averaged
	sequence $\bar x_k(r)$ to the solution set of problem
		(\ref{def:SVI}). First, 
		we present conditions under  which a subsequence of the averaged
		sequence converges to the solution set almost surely. 

\begin{proposition}[{\bf Almost sure convergence of subsequences of
	$\mathbf{\bar x_k(r)}$}]\label{prop:ave-one-acc}
Consider problem (\ref{def:SVI}) 
and suppose the conditions of Lemma \ref{lemma:gap-conv} are satisfied. 
Then, \an{we have} almost surely
\begin{align}
\liminf_{k \rightarrow \infty} \mathrm{G}(\bar
		x_k(r))=0 \qquad \mbox{ and } \qquad 
		\liminf_{k \rightarrow \infty} \mathrm{dist}\left(\bar
		x_k(r),X^*\right)=0. 
\end{align}
\end{proposition}
\begin{proof} Since the conditions of Lemma \ref{lemma:gap-conv} hold, we have 
$\lim_{k \rightarrow \infty}\EXP{\hbox{G}(\bar x_k(r))}=0.$ 
Invoking Lemma \ref{lemma:gap-positive}(a) yields $\hbox{G}(\bar x_k(r)) \geq 0$ 
for any $k\geq 1$. Using Fatou's lemma, we conclude that
\an{$$\liminf_{k \rightarrow \infty} \hbox{G}(\bar x_k(r))=0\qquad a.s.$$}

\an{To prove the relation for $\mathrm{dist}\left(\bar
		x_k(r),X^*\right)$, }
we note that every accumulation point of the \uvs{sequences} produced by this scheme lies in $X$
by the definition of the algorithm and by the \uvs{closedness} of $X$. It follows that at every
accumulation  point, the gap function is nonnegative. 
We now proceed by contradiction and assume the result is false.  Consequently,  we have
that 
\begin{align*} 
\liminf_{k \to \infty} \mbox{dist}(\bar x_k(r), X^*) > 0 \mbox{ with
a positive probability.} 
\end{align*}
Consequently, along any sequence produced by the algorithm,  {with
	positive probability, there exists no 
subsequence that converges} to the solution set. In other words, with
positive probability, we have that the gap function tends to a
positive number (since the limit point lies in $X$) along every such
subsequence associated with this sequence, i.e., 
\begin{align*} 
\liminf_{k \to \infty} \fyRev{\hbox{G}(\bar x_k(r))} > 0 \mbox{ with
a positive probability.} 
\end{align*}
But this contradicts the fact  that $\liminf_{k \rightarrow \infty} \hbox{G}(\bar x_k(r))=0$ a.s.\
and, hence, the result follows.
\end{proof} 

In Proposition~\ref{prop:ave-one-acc}, we proved the  convergence  of the
	averaged sequence \us{in a subsequential sense}. However, in the absence
		of regularization and smoothing, there is no guarantee that the
		\us{entire} sequence $\bar x_k(r)$ is convergent. Motivated by this shortcoming, in
		sequel, we present a class of
		stepsize, regularization and smoothing sequences such that
	 the entire averaging sequence is convergent \us{in an almost-sure
		 sense}  to the least norm solution of the problem.
		 Subsequently, 
		 we also provide a rate analysis for the expected gap function when almost sure convergence is attained. 
		 \an{In our analysis, we make use of the following basic result for averaged sequences.} 

\begin{lemma}[{\bf Theorem 6, pg.\ 75 of~\cite{Knopp51}}]\label{lemma:cesaro} 
Let $\{u_t\}\subset \mathbb{R}^n$ be a convergent sequence of vectors
with the limit point {$\hat u\in\mathbb{R}^n$. Suppose that $\{\alpha_k\}$ is a
sequence of positive numbers where $\sum_{k=0}^\infty \alpha_k=\infty$. 
Consider the average sequence $\{v_k\}$ given by} 
\[v_k\triangleq \frac{\sum_{t=0}^{k-1} \alpha_t u_t}{\sum_{t=0}^{k-1} \alpha_t}\qquad\hbox{for all }k\ge1.\]
Then, we have $\lim_{k \rightarrow \infty} v_k=\hat u.$
\end{lemma}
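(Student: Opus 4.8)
The plan is to establish this via the classical weighted Cesàro (Toeplitz) argument, splitting the averaging sum into an early ``head'' and a late ``tail.'' The starting point is the observation that the averaging weights $\alpha_t/\sum_{i=0}^{k-1}\alpha_i$ are nonnegative and sum to one over $t=0,\ldots,k-1$, so that subtracting $\hat u$ yields the convenient identity
\[ v_k-\hat u = \frac{\sum_{t=0}^{k-1}\alpha_t(u_t-\hat u)}{\sum_{t=0}^{k-1}\alpha_t}, \qquad k\ge1. \]
By the triangle inequality it then suffices to bound $\sum_{t=0}^{k-1}\alpha_t\|u_t-\hat u\|$ relative to $\sum_{t=0}^{k-1}\alpha_t$.

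Fix $\epsilon>0$. Since $u_t\to\hat u$, I would choose $T$ with $\|u_t-\hat u\|\le \epsilon/2$ for all $t\ge T$, and split
\[ \|v_k-\hat u\| \le \frac{\sum_{t=0}^{T-1}\alpha_t\|u_t-\hat u\|}{\sum_{t=0}^{k-1}\alpha_t} + \frac{\sum_{t=T}^{k-1}\alpha_t\|u_t-\hat u\|}{\sum_{t=0}^{k-1}\alpha_t}. \]
The second term is at most $\epsilon/2$ for every $k>T$, because $\|u_t-\hat u\|\le\epsilon/2$ on the tail and $\sum_{t=T}^{k-1}\alpha_t\le\sum_{t=0}^{k-1}\alpha_t$. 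The first term has a fixed, finite numerator $S_T\triangleq\sum_{t=0}^{T-1}\alpha_t\|u_t-\hat u\|$ (independent of $k$) while its denominator diverges, since $\sum_{k=0}^\infty\alpha_k=\infty$; hence there exists $K\ge T$ with $S_T\big/\sum_{t=0}^{k-1}\alpha_t\le\epsilon/2$ for all $k\ge K$. Combining the two estimates gives $\|v_k-\hat u\|\le\epsilon$ for all $k\ge K$, and since $\epsilon>0$ was arbitrary, $v_k\to\hat u$.

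The only genuine subtlety---and the single place the hypothesis $\sum_k\alpha_k=\infty$ is used---is ensuring the head term vanishes: the positivity of $\{\alpha_k\}$ together with divergence of the partial sums is exactly what drives $S_T\big/\sum_{t=0}^{k-1}\alpha_t\to0$ while $T$ (and thus $S_T$) stays frozen. I expect this to be the main, and essentially only, obstacle; the tail bound is immediate. No convexity or finite-dimensional structure is needed beyond the triangle inequality, so the argument carries over verbatim for vectors in $\mathbb{R}^n$.
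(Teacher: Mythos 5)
Your proof is correct and follows essentially the same route as the paper's: fix $\epsilon>0$, split the weighted sum at a threshold $T$ beyond which $\|u_t-\hat u\|\le\epsilon/2$, bound the tail by $\epsilon/2$ using that the weights sum to at most one, and kill the frozen head term using the divergence of $\sum_k\alpha_k$. The only cosmetic difference is that you bound the head by the sum of norms $\sum_{t=0}^{T-1}\alpha_t\|u_t-\hat u\|$ while the paper keeps the norm of the sum $\bigl\|\sum_{t=0}^{T_1-1}\alpha_t(u_t-\hat u)\bigr\|$, which is immaterial.
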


\begin{remark}\label{rem:cesaro}
When $\{x_k\}$ is a convergent sequence, by Lemma~\ref{lemma:cesaro}, 
the condition $\sum_{t=0}^\infty \g_t^{r}=\infty$
needs to be met so that the averaging sequence $\bar x_k(r)$ converges
to the same limit point. When the stepsize $\g_k$ is of the form
$\frac{\g_0}{(k+1)^a}$, this condition is equivalent to $ar\leq1$. For
example, when $0.5<a<1$, the parameter \an{$r$ has to lie in $(-\infty,2)$}. 
\end{remark}

\begin{proposition}[{\bf Almost sure convergence of the sequence $\mathbf{\{\bar x_k(r)\}}$}]\label{prop:ave-all-acc}
Consider problem (\ref{def:SVI}) and let sequence $\{ \bar x_k(r)\}$ be
generated by the {\ref{algorithm:IRLSA-averaging} algorithm}. Suppose that
Assumptions \ref{assum:step_error_sub_1} and
\ref{assum:step_error_sub_2} hold. {Also, assume that} 
sequences $\{\g_k\}$, $\{\eta_k\}$, and $\{\e_k\}$ are given by
$\g_k=\g_0(k+1)^{-a}$, $\eta_k=\eta_0(k+1)^{-b}$, and
$\e_k=\e_0(k+1)^{-c}$ with positive constants
$\g_0,\eta_0,\e_0$. Moreover, assume that mapping $F$ is differentiable
at $t^*$ and its Jacobian is bounded in a neighborhood of $t^*$.
Suppose that $(a,b,c,r)$ are chosen such that the following hold:
\begin{align}\label{cond:abcr} a,b,c>0, \quad a>0.5,
   \quad  a+3b<1, \quad b+2c<a, \quad b<c \quad  \hbox{and} \quad r \leq
   \frac{1}{a}.\end{align}
 Then, {almost surely, $\lim_{k \to \infty}\{\bar x_k(r)\}=t^*$ where $t^*$ is the least norm solution of VI$(X,F)$.} 
\end{proposition}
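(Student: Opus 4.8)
The plan is to reduce this statement to the almost-sure convergence of the underlying (non-averaged) RSSA iterates, which was already established in Theorem~\ref{prop:almost-sure}(b), and then to transfer that convergence to the weighted average via the Cesàro-type Lemma~\ref{lemma:cesaro} applied pathwise. The key observation is that the sequence $\{x_k\}$ appearing in the \ref{algorithm:IRLSA-averaging} recursion is exactly the sequence generated by the RSSA scheme, so no new convergence analysis for $\{x_k\}$ is needed.

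First, I would verify that the parameter choices in \eqref{cond:abcr} place us squarely within the hypotheses of the earlier machinery. The conditions $a,b,c>0$, $a>0.5$, $a+3b<1$, and $b+2c<a$ are precisely those required in Lemma~\ref{lemma:stepsize_a.s.}, so the sequences $\g_k=\g_0(k+1)^{-a}$, $\eta_k=\eta_0(k+1)^{-b}$, and $\e_k=\e_0(k+1)^{-c}$ satisfy all parts of Assumption~\ref{assum:IRLSA}. The extra requirement $b<c$ yields $\lim_{k\to\infty}\e_k/\eta_k=0$, exactly the additional condition flagged in the remark following Lemma~\ref{lemma:stepsize_a.s.}. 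Together with the standing hypothesis that $F$ is differentiable at $t^*$ with a bounded Jacobian in a neighborhood of $t^*$, this places us in the setting of Theorem~\ref{prop:almost-sure}(b). Invoking that result, I would conclude that $\{x_k\}$ converges almost surely to the least-norm solution $t^*$; let $\Omega_0\subseteq\Omega$ denote the probability-one event on which $x_k(\omega)\to t^*$.

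Second, I would apply Lemma~\ref{lemma:cesaro} pathwise on $\Omega_0$. For each fixed $\omega\in\Omega_0$, the sequence $\{x_k(\omega)\}$ converges to $t^*$, and the weights $\alpha_t\triangleq\g_t^r$ are deterministic and positive. The divergence hypothesis $\sum_{t}\g_t^r=\infty$ required by Lemma~\ref{lemma:cesaro} is equivalent, by Remark~\ref{rem:cesaro}, to $ar\le 1$, i.e.\ $r\le 1/a$, which is the last condition in \eqref{cond:abcr}; indeed $\g_t^r=\g_0^r(t+1)^{-ar}$ with $-ar\ge -1$ forces the series to diverge. Matching $u_t=x_t(\omega)$ and $v_k=\bar x_k(r)(\omega)=\frac{\sum_{t=0}^{k-1}\g_t^r x_t(\omega)}{\sum_{t=0}^{k-1}\g_t^r}$, Lemma~\ref{lemma:cesaro} gives $\bar x_k(r)(\omega)\to t^*$. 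Since this holds for every $\omega\in\Omega_0$ and $\mathbb{P}(\Omega_0)=1$, we obtain $\lim_{k\to\infty}\bar x_k(r)=t^*$ almost surely.

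The only delicate point is the pathwise argument: Lemma~\ref{lemma:cesaro} is a deterministic statement, so one must apply it along each sample path in the almost-sure event, exploiting crucially that the averaging weights $\g_t^r$ are non-random (so the normalized partial sums $\sum_{t=0}^{k-1}\g_t^r$ are fixed constants, not random quantities). I do not anticipate any further obstacle, since the substantive work---both the almost-sure convergence of $\{x_k\}$ to $t^*$ and the verification of Assumption~\ref{assum:IRLSA}---has already been discharged in Theorem~\ref{prop:almost-sure} and Lemma~\ref{lemma:stepsize_a.s.}.
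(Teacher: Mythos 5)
Your proposal is correct and follows essentially the same route as the paper's own proof: the paper likewise verifies that the conditions in \eqref{cond:abcr} satisfy Lemma~\ref{lemma:stepsize_a.s.} (with $b<c$ supplying $\e_k/\eta_k\to0$), invokes Theorem~\ref{prop:almost-sure}(b) to get $x_k\to t^*$ a.s., and then applies Lemma~\ref{lemma:cesaro} via Remark~\ref{rem:cesaro} using $ar\le 1$. Your write-up is in fact slightly more careful than the paper's, since you make the pathwise application of the deterministic Cesàro lemma (with non-random weights $\g_t^r$) explicit, a point the paper leaves implicit.
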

\begin{proof} Since \us{the} conditions of Lemma
\ref{lemma:stepsize_a.s.} are satisfied, we may invoke Theorem~\ref{prop:almost-sure}b.
This ensures that $\{x_k\}$ tends to $t^*$ in an a.s.
sense. Since we assumed $ar\leq1$, Lemma
	\ref{lemma:cesaro} (see  Remark \ref{rem:cesaro})
	implies that $\lim_{k \to \infty} \bar x_{k}(r)=\lim_{k \to
		\infty}x_k=t^*$ almost surely.
%
%
\end{proof}

\subsection{Rate analysis for the gap function}\label{sec:rate-gap}
In this subsection, we analyze the convergence rate of the expected
	gap function. 

\begin{proposition}[{\bf Convergence rate of expected gap function values}]\label{prop:as-rate}
Suppose the conditions of Proposition~\ref{prop:ave-all-acc} are satisfied. 
Then, for any given $0<\delta<\frac{1}{6}$, there exist some
$a,b,c $, and $r$ satisfying (\ref{cond:abcr}) for which the term $\EXP{\hbox{G}(\bar x_k(r))}$
converges to zero with the order ${\cal O}(k^{-(\frac{1}{6}-\delta)})$. 
More precisely, 
let $0<\delta<\frac{1}{6}$ be a given number and choose $\delta'$ such that \fyRev{$0<\delta'<\delta$}. Suppose $a=0.5+3(\delta-\delta')$ and $b=\frac{1}{6}-\delta$, $c=\frac{1}{6}$, and 
$r<\frac{0.5-3(\delta-\delta')}{0.5+3(\delta-\delta')}$. Then,  
 $\EXP{\hbox{G}(\bar x_k(r))}$ converges to zero with the order ${\cal O}(k^{-(\frac{1}{6}-\delta)})$. 
\end{proposition}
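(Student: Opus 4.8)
The plan is to specialize the general gap--function estimate to the prescribed polynomial sequences and then read off the decay order of each resulting term, showing that the regularization term dictates the overall rate. First I would check that the stated $a=\tfrac12+3(\delta-\delta')$, $b=\tfrac16-\delta$, $c=\tfrac16$, together with any $r<\frac{0.5-3(\delta-\delta')}{0.5+3(\delta-\delta')}$, satisfy all the requirements in~\eqref{cond:abcr}. Positivity of $b$ uses $\delta<\tfrac16$; the bound $a>\tfrac12$ follows from $\delta'<\delta$; one computes $a+3b=1-3\delta'<1$ and $a-(b+2c)=4\delta-3\delta'>0$; $b<c$ is immediate; and since $a>\tfrac12$ we have $\tfrac{1-a}{a}<\tfrac1a$, so the restriction on $r$ gives $r<\tfrac{1-a}{a}\le\tfrac1a$. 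The cutoff $\delta'<\min\{\delta,\tfrac{1-6\delta}{9}\}$ is imposed to guarantee these strict inequalities simultaneously while holding $b=\tfrac16-\delta$ at its target value. Hence the hypotheses of Proposition~\ref{prop:ave-all-acc}, and in particular those of Lemma~\ref{prop:gap-bounds} and Lemma~\ref{lemma:gap-conv}, are in force.

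Because $a>\tfrac12$ forces $\tfrac{1-a}{a}<1$, the chosen $r$ satisfies $r<1$, so I would work from the $r<1$ estimate~\eqref{ineq:gap-conv6}, namely
\[
\EXP{\mathrm{G}(\bar x_N(r))}\le 2\g_0^{r-2}M^2 q_N + C s_N + M^2 t_N + \g_0 M^* u_N + \tfrac{2M^2}{\g_0}v_N,
\]
with $q_N,s_N,t_N,u_N,v_N$ as in~\eqref{def:terms0}--\eqref{def:terms2} and $M^*$ a constant. Substituting $\g_k=\g_0(k+1)^{-a}$, $\eta_k=\eta_0(k+1)^{-b}$, $\e_k=\e_0(k+1)^{-c}$ turns each term into a ratio of power sums. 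Since $ar<1$ (as $r<\tfrac1a$), the common denominator obeys $\sum_{k=0}^{N-1}(k+1)^{-ar}=\Theta(N^{1-ar})$ by~\eqref{ineq:integral-bounds}. Applying the same integral comparisons term by term, splitting into the sub-cases according to whether $ar$, $ar+b$, $ar+c$, $ar+a$ are less than, equal to, or greater than $1$ exactly as in the proof of Lemma~\ref{lemma:gap-conv}, yields $q_N=\mathcal{O}(N^{-(1-ar)})$, $s_N=\mathcal{O}(N^{-c})$, $t_N=\mathcal{O}(N^{-b})$, $u_N=\mathcal{O}(N^{-a})$, and $v_N=\mathcal{O}(N^{-(1-a)})$.

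The decisive step is then a comparison of the five exponents $1-ar,\ 1-a,\ c,\ a,\ b$. The restriction $r<\tfrac{1-a}{a}$ gives $ar<1-a$, hence $1-ar>a$, so $q_N$ decays strictly faster than $v_N$ and the two boundary terms are controlled by the exponent $1-a$. I would then verify that $b=\tfrac16-\delta$ is the smallest of the remaining exponents: one has $(1-a)-b=\tfrac13-2\delta+3\delta'>0$ since $\delta<\tfrac16$, while $c=\tfrac16>b$, $a>\tfrac12>b$, and $1-ar>a>b$. Consequently every term in~\eqref{ineq:gap-conv6} is $\mathcal{O}(N^{-(1/6-\delta)})$, the regularization contribution $M^2 t_N$ attaining this order, so $\EXP{\mathrm{G}(\bar x_N(r))}=\mathcal{O}(N^{-(1/6-\delta)})$.

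The part I expect to be most delicate is the bookkeeping around the two boundary terms $q_N$ and $v_N$: one must invoke the $r$-restriction precisely so that $q_N$ is dominated, and one must keep the several sub-cases of the integral comparison~\eqref{ineq:integral-bounds} consistent so that each $\mathcal{O}(\cdot)$ estimate is valid in its regime. By contrast, once admissibility of $(a,b,c,r)$ is established, the rate itself reduces to the elementary comparison of polynomial exponents above, with the regularization exponent $b=\tfrac16-\delta$ emerging as the bottleneck that caps the achievable rate strictly below the optimal $\mathcal{O}(1/\sqrt{k})$.
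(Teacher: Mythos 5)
Your proposal is correct and follows essentially the same route as the paper's proof: specialize the $r<1$ bound~\eqref{ineq:gap-conv6} to the polynomial sequences, use the integral comparisons~\eqref{ineq:integral-bounds} to obtain $q_N=\mathcal{O}(N^{-(1-ar)})$, $s_N=\mathcal{O}(N^{-c})$, $t_N=\mathcal{O}(N^{-b})$, $u_N=\mathcal{O}(N^{-a})$, $v_N=\mathcal{O}(N^{-(1-a)})$, and identify the regularization exponent $b=\tfrac16-\delta$ as the bottleneck. Your explicit verification of the admissibility conditions~\eqref{cond:abcr} (e.g., $a+3b=1-3\delta'$ and $a-(b+2c)=4\delta-3\delta'$) is a sound addition that the paper's proof leaves implicit, and your inequality-based handling of $ar<1-a$ is if anything cleaner than the paper's treatment of $r$ at the boundary value.
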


\begin{proof}
Note that since \fyRev{$ar<0.5-3(\delta-\delta')$} and
$0<\delta'<\delta<\frac{1}{6}$, we have $0<ar<0.5$ and $0<r<1$.
Therefore, the inequality \eqref{ineq:gap-general-bound2} holds for
$r<1$. Let us define \fyRev{$M^*\triangleq 2\max\{\eta_0^2M^2,1.5C^2\}$} \fyRev{ and consider the definitions given by
	\eqref{def:terms} and \eqref{def:terms0}}.  
  It follows that 
   
    \fyRev{  \begin{align}\label{ineq:gap-conv6-nd}\EXP{ \hbox{G}(\bar x_N(r))} &\leq
   \frac{4M^2}{\g_0}h_N +2C\ell_N +M^2m_N + \g_0M^*p_N+
   \frac{4M^2}{\g_0}v_N.
   \end{align}
Note that by choosing $a=0.5+3(\delta-\delta')$ and $b=\frac{1}{6}-\delta$, $c=\frac{1}{6}$, and 
$r<\frac{0.5-3(\delta-\delta')}{0.5+3(\delta-\delta')}$, all of the values $ar$, $ar+c$, and $ar+b$, and $ar+a$ are smaller than one. Thus, from Lemma
\ref{lemma:ineqHarmonic}, we have
    \begin{align*}
  & h_N \leq  \frac{1}{\frac{N^{1-ar} -1}{1-ar}}  \ \implies  h_N = {\cal O}(N^{ar-1})={\cal O}(N^{-(0.5+3(\delta-\delta'))}), \cr 
 &    \ell_N \leq  \frac{1+\frac{(N+1)^{1-(ar+c)}
			-1}{1-(ar+c)}}{\frac{N^{1-ar} -1}{1-ar}}
		 \ \implies \ell_N = {\cal O}(N^{-c})={\cal O}(N^{-\frac{1}{6}}), \cr 
    & m_N \leq  \frac{1+\frac{(N+1)^{1-(ar+b)}
			-1}{1-(ar+b)}}{\frac{N^{1-ar} -1}{1-(ar)}}  \
		\implies  m_N = {\cal O}(N^{-b})={\cal O}(N^{-\left(\frac{1}{6}-\delta\right)}),\cr 
    & p_N \leq  \frac{1+\frac{(N+1)^{1-(ar+a)}
			-1}{1-(ar+a)}}{\frac{N^{1-ar} -1}{1-ar}}
	\ \implies p_N = {\cal O}(N^{-a})={\cal O}(N^{-(0.5+3(\delta-\delta'))}), \cr 
      &   v_N  \leq  \frac{N^{a(1-r)}}{\frac{N^{1-ar} -1}{1-ar}}  \
			\implies v_N= {\cal O}(N^{-(1-a)})={\cal O}(N^{-(0.5-3(\delta-\delta'))}).
   \end{align*}
Note that $\delta < \frac{1}{6}$ and $\delta'>0$, it follows that $\frac{1}{6}-\delta < 0.5-3(\delta-\delta')$. Therefore, from \eqref{ineq:gap-conv6-nd}, we obtain $\EXP{ \hbox{G}(\bar x_N(r))}={\cal O}(N^{-\left(\frac{1}{6}-\delta\right)})$. }
\end{proof}

\an{In what follows}, we set the regularization
	and smoothing parameters to zero i.e., $\eta_k=\e_k=0$ for all $k
		\geq 0$. \an{In this case,} the \ref{algorithm:IRLSA-averaging}
			algorithm without regularization and smoothing reduces to  the aSA$_r$ algorithm given by:
 \begin{align}\label{algorithm:IRLSA-averaging2}
 \tag{aSA$_r$}\begin{aligned}
x_{k+1}&=\Pi_{X}\left(x_k-\g_k\Phi(x_k,\xi_k)\right), \cr 
 \bar x_{k+1}(r) &\triangleq \frac{\sum_{t=0}^k \gamma_t^r x_t}{\sum_{t=0}^k \gamma_t^r}. \end{aligned}
\end{align}
First, we show that the expected gap
function \an{values along} the averaged sequence generated by the~\ref{algorithm:IRLSA-averaging2} algorithm, 
converges to zero at the optimal
rate of ${\cal O}({1}\slash{\sqrt{k}})$ for $r<1$. 

\begin{proposition}[{\bf Optimal rate of convergence for \fyRev{\ref{algorithm:IRLSA-averaging2}}}]\label{prop:optimal-rate}
Consider problem (\ref{def:SVI}) and let sequence $\{ \bar x_k(r)\}$ be
generated by the {\ref{algorithm:IRLSA-averaging2}} algorithm \us{ and
	suppose that} Assumptions \ref{assum:step_error_sub_1} and
	\ref{assum:step_error_sub_2}(a) \fyRev{and $\g_k=\frac{\g_0}{\sqrt{k+1}}$ with
	 $\g_0>0$. Then, the following results hold:
\begin{itemize}
\item [(a)] If $r=1$, then $\EXP{ \hbox{G}(\bar x_N(r))}$ converges to zero as $N \to \infty$ 
at the rate  ${\cal O}\left(\frac{\ln(N)}{\sqrt{N}}\right)$.
 \item [(b)]  For any arbitrary $r<1$, $\EXP{ \hbox{G}(\bar x_N(r))}$ converges to zero as $N \to \infty$ 
at the rate  ${\cal O}\left(\frac{1}{\sqrt{N}}\right)$.
\end{itemize}}
\end{proposition}
\fyRev{\begin{proof} 
From Lemma \ref{prop:gap-bounds} and assuming $\e_0=\eta_0=0$, we have 
 \begin{align}\label{ineq:gap-ineq-noreg-nosmooth}\EXP{ \hbox{G}(\bar
		   x_N(r))} & \leq
 \frac{1}{\sum_{k=0}^{N-1}\g_k^r}\left(4M^2(\g_{0}^{r-1}+\bko_r\g_{N-1}^{r-1})+1.5C^2\sum_{k=0}^{N-1}\g_k^{r+1}\right).
    \end{align}

 \noindent (a) Replacing $r=1$ and $\g_k=\frac{\g_0}{\sqrt{k+1}}$ in \eqref{ineq:gap-ineq-noreg-nosmooth}, and recalling Lemma \ref{lemma:ineqHarmonic}, we obtain
  \begin{align*}
  \EXP{ \hbox{G}(\bar
		   x_N(r))} & \leq
 \frac{4M^2+1.5C^2\g_0^2\sum_{k=0}^{N-1}(k+1)^{-1}}{\g_0\sum_{k=0}^{N-1}(k+1)^{-0.5}} \leq \frac{4M^2\g_0^{-1}+1.5C^2\g_0\left(1+\ln(N)\right)}{2(\sqrt{N}-1)}={\cal O}\left(\frac{\ln(N)}{\sqrt{N}}\right).
    \end{align*}

 \noindent (b)   Replacing $\g_k=\frac{\g_0}{\sqrt{k+1}}$ in \eqref{ineq:gap-ineq-noreg-nosmooth}, using $r<1$, 
 and invoking Lemma \ref{lemma:ineqHarmonic}, we obtain
  \begin{align*}
  \EXP{ \hbox{G}(\bar
		   x_N(r))} & \leq
 \frac{4M^2\g_{0}^{r-1}\left(1+N^\frac{1-r}{2}\right)+1.5C^2\g_0^{r+1}\sum_{k=0}^{N-1}(k+1)^{-\frac{r+1}{2}}}{\g_0^r\sum_{k=0}^{N-1}(k+1)^{-\frac{r}{2}}} \\
& \leq  \frac{8M^2\g_{0}^{-1}N^\frac{1-r}{2}+1.5C^2\g_0\sum_{k=0}^{N-1}(k+1)^{-\frac{r+1}{2}}}{\sum_{k=0}^{N-1}(k+1)^{-\frac{r}{2}}} \\
& \leq \frac{8M^2\g_{0}^{-1}N^\frac{1-r}{2}+1.5C^2\g_0\left(1+\frac{(N+1)^\frac{1-r}{2}-1}{\frac{1-r}{2}}\right)}{(N^{1-\frac{r}{2}}-1)/(1-\frac{r}{2})}= \mathcal{O}\left(N^{\frac{1-r}{2}-(1-\frac{r}{2})}\right)={\cal O}\left(\frac{1}{\sqrt{N}}\right).
\end{align*}
\end{proof}}
Comparing this result with the \us{more standard averaging scheme
	that uses $r=1$} (cf.~\cite{nemirovski_robust_2009}) supports the
		idea of using $r<1$ for the \uvs{averaging} sequence $\bar x_k(r)$.
		Specially, when $r<0$ and the sequence $\g_k$ is decreasing, the
		weights in the averaging sequence \us{grow implying that more
			recently generated iterates are attributed more weight}. A
			more general form of the \ref{algorithm:IRLSA-averaging2} algorithm is when the average
			sequence is calculated using a window-based formula given by
			Algorithm~\ref{alg:window}.
 The following result is
 derived using \fyRev{Lemma~\ref{prop:gap-bounds}}. 
\begin{corollary}
Consider problem (\ref{def:SVI}) and let the sequence $\{\bar x_k(r)\}$ be generated by the \ref{alg:window} algorithm, where $\g_k>0$ and  $r \in \Real$. Suppose Assumptions \ref{assum:step_error_sub_1} and \ref{assum:step_error_sub_2} hold, 
and {let the stepsize sequence $\{\g_k\}$ be} non-increasing. Then,   
   \begin{align}\label{ineq:gap-general-bound1-window}\EXP{
	   \hbox{G}(\bar x_N^\ell(r))} \leq
	   \frac{1}{\sum_{k=\ell}^{N-1}\g_k^r}\left(
			   4M^2(\g_{\ell}^{r-1}+\g_{N-1}^{r-1}\bko_r)+C^2\sum_{k=\ell}^{N-1}\g_k^{r+1}\right),
    \end{align}
where $0\leq \ell \leq N-1$, $N \geq 1$, $\bko_r=0$ when $r\geq 1$ and
$\bko_r=1$ when $r<1$.
\end{corollary}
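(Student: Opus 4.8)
The plan is to mirror the argument of Proposition~\ref{prop:gap-bounds}, setting $\eta_k=\e_k=0$ throughout and replacing the full summation $\sum_{k=0}^{N-1}$ by the windowed sum $\sum_{k=\ell}^{N-1}$. First I would specialize the per-iteration inequality~\eqref{ineq:rate_proof_000} of Lemma~\ref{lemma:gap-bounds} to the present setting. With $\e_k=0$ the smoothing vector $z_k$ is identically zero, so $\Phi(x_k+z_k,\xi_k)=\Phi(x_k,\xi_k)$ and every term carrying a factor of $\e_k$ or $\eta_k$ drops out. The one place where a sharper estimate is now available is the bound on $\g_k^2\|\Phi(x_k,\xi_k)+\eta_kx_k\|^2$: in Lemma~\ref{lemma:gap-bounds} this was split via $(a+b)^2\le 2a^2+2b^2$, introducing an extra factor of two on the $\|\Phi\|^2$ term. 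With $\eta_k=0$ no such splitting is needed, and the $\|\Phi(x_k,\xi_k)\|^2$ term enters with coefficient $\tfrac12\g_k^{r+1}$ rather than $\g_k^{r+1}$. This is exactly why the multiplier of $C^2$ drops from $\tfrac32$ to $1$ after expectations are taken, since the $\|w_k\|^2$ term still contributes $\tfrac12 C^2$.

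Next I would sum the specialized inequality over the window $k=\ell,\ldots,N-1$ and telescope the terms $\tfrac12\g_k^{r-1}(\|x_k-y\|^2+\|u_k-y\|^2)$, where $u_k$ is the auxiliary sequence of~\eqref{def:u_t}. The monotonicity of $\{\g_k\}$ drives the case split on $r$ precisely as in Proposition~\ref{prop:gap-bounds}. For $r\ge1$ we have $\g_{k+1}^{r-1}\le\g_k^{r-1}$, so after bounding the outgoing coefficient the sum telescopes cleanly, leaving only the leading boundary term at $k=\ell$; bounding $\|x_\ell-y\|^2+\|u_\ell-y\|^2\le 8M^2$ (note $u_\ell\in X$, being a projection onto $X$, so $\|u_\ell\|\le M$) yields $4M^2\g_\ell^{r-1}$ and accounts for $\bko_r=0$. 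For $r<1$ the coefficients $\g_k^{r-1}$ are nondecreasing, so I would add and subtract $\tfrac12\g_{k-1}^{r-1}(\|x_k-y\|^2+\|u_k-y\|^2)$, producing a telescoping sum together with a residual term bounded by $4M^2(\g_k^{r-1}-\g_{k-1}^{r-1})$; summing this residual telescopes to at most $4M^2\g_{N-1}^{r-1}$, which leaves boundary contributions at both $k=\ell$ and $k=N-1$ and explains the factor $\bko_r=1$.

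Finally I would divide by $\sum_{k=\ell}^{N-1}\g_k^r$, recognize the convex combination $\bar x_N^\ell(r)$, take the supremum over $y\in X$ to bring in $\hbox{G}(\bar x_N^\ell(r))$ through Definition~\ref{def:gap1}, and take expectations. At that point Lemma~\ref{lemma:bound-on-errors} gives $\EXP{w_k^T(u_k-x_k)}=0$ (since $u_k-x_k$ is $\sF_k$-measurable) and $\EXP{\|w_k\|^2}\le C^2$, while Assumption~\ref{assum:step_error_sub_1}(c) gives $\EXP{\|\Phi(x_k,\xi_k)\|^2}\le C^2$; combining the two quadratic terms produces the single $C^2\sum_{k=\ell}^{N-1}\g_k^{r+1}$. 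The only genuinely delicate bookkeeping is the telescoping in the $r<1$ case and the careful tracking of the sharper $C^2$ constant; since the remaining manipulations are identical to those of Proposition~\ref{prop:gap-bounds}, I do not anticipate any new obstacle beyond adapting the summation limits from $0$ to $\ell$ and handling the auxiliary sequence $u_k$ at the window endpoint.
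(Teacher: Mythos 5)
Your proposal is correct and takes essentially the same route as the paper, which proves this corollary precisely by rerunning the argument of Lemma~\ref{prop:gap-bounds} (built on the per-iteration inequality~\eqref{ineq:rate_proof_000} of Lemma~\ref{lemma:gap-bounds}) with $\eta_k=\e_k=0$ and with the summation limits changed from $0,\dots,N-1$ to $\ell,\dots,N-1$, including the same case split on $r$ and the same telescoping of the $\g_k^{r-1}\bigl(\|x_k-y\|^2+\|u_k-y\|^2\bigr)$ terms. Your accounting for why the multiplier of $C^2$ drops from $\tfrac{3}{2}$ to $1$ --- with $\eta_k=0$ no $(a+b)^2\le 2a^2+2b^2$ splitting is needed, so $\|\Phi(x_k,\xi_k)\|^2$ and $\|w_k\|^2$ each contribute $\tfrac{1}{2}C^2$ --- is exactly the observation the authors record in the remark preceding the corollary.
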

\fyRev{
\begin{proof}
The proof can be done in a similar fashion to the proofs of \an{Lemmas}~\ref{lemma:gap-bounds} and~\ref{prop:gap-bounds}. More precisely, in the proof of each of these Lemmas, we put $\e_k=\eta_k=0$ in all the steps. There is one place in the proof of Lemma~\ref{lemma:gap-bounds} that we can make a sharper bound, which is in the relation~\eqref{ineq:proofLemmaEightSecondIneq}. In the second inequality of~\eqref{ineq:proofLemmaEightSecondIneq}, since $\eta_k=\e_k=0$, we have $$\g_k^2\|\Phi(x_k+z_k,\xi_k)+\eta_kx_k\|^2=\g_k^2\|\Phi(x_k,\xi_k)\|^2,$$ implying that we do not have to use the relation $\|a+b\|^2\le 2\|a\|^2 + 2\|b\|^2$. As a consequence, instead of having $2\g_k^2\|\Phi(x_k,\xi_k)\|^2$, we will have $\g_k^2\|\Phi(x_k,\xi_k)\|^2$ in the last inequality of~\eqref{ineq:proofLemmaEightSecondIneq}. As a result, in the final bound, the multiplier of $C^2$ changes from $1.5$ to
		 $1$. 
\end{proof}}

 \begin{remark}
 The above result \us{generalizes the} bound
 in~\cite{nemirovski_robust_2009} in two directions. First, instead of
 assuming $r=1$, we \us{allow for $r$ to be a real number, leading to
	 the addition of the term $\g_{N-1}^{r-1}\bko_r$.} Second, we derive
	 this bound for the gap function of \us{monotone variational
		 inequality problems}, while the bound
		 in~\cite{nemirovski_robust_2009} addresses convex stochastic
		 optimization problems. \us{Our generalization leads to a
			 slightly different bound; specifically, in that $C^2$ in 
				 (\ref{ineq:gap-general-bound1-window}) is replaced
					 by $0.5C^2$ in the optimization setting.}  
 \end{remark}
 
Next, we develop a window-based diminishing stepsize rule and provide an
associated rate result.

\fyRev{\begin{proposition}[Rate of convergence for window-based averaging schemes]\label{prop:window}
Consider problem (\ref{def:SVI}) and let the sequence $\{\bar
x_N^\ell(r)\}$ be generated by the \ref{alg:window} algorithm, where $r \leq 1$ and $\ell=\lceil{\lambda N}\rceil$ for a fixed
{$\lambda \in (0,1)$}
with {$N > \frac{2}{1-\lambda}$}. Suppose that Assumptions \ref{assum:step_error_sub_1} and \ref{assum:step_error_sub_2} hold and the stepsize sequence $\{\g_N\}$ is given by 
\begin{align}\label{def:stepsize-window}\g_N=\frac{2M\sqrt{1+\bko_r}}{C\sqrt{N+1}}, \quad \hbox{for all }N\geq 0.
\end{align}
Then, the following statements hold:
\begin{itemize}
\item [(a)] For $r = 1$, the optimal convergence rate is attained, i.e., $\EXP{ \hbox{G}(\bar x_N^\ell(r))} =\mathcal{O}\left(\frac{1}{\sqrt{N}}\right)$. 
\item [(b)] For any $r<1$, the optimal convergence rate is attained, i.e., $\EXP{ \hbox{G}(\bar x_N^\ell(r))} =\mathcal{O}\left(\frac{1}{\sqrt{N}}\right)$.
\item[(c)] Suppose the theoretical upper bound (TUB) of the expected gap function in \eqref{ineq:gap-general-bound1-window} is defined as follows:
\[TUB(r,\lambda,N)\triangleq \frac{1}{\sum_{k=\ell}^{N-1}\g_k^r}\left(
			   4M^2(\g_{\ell}^{r-1}+\g_{N-1}^{r-1}\bko_r)+C^2\sum_{k=\ell}^{N-1}\g_k^{r+1}\right), \]
where $\g_k$ is given by \eqref{def:stepsize-window}. Then, for any arbitrary $r<1$, there exists $\bar \lambda \in (0,1)$ and $\bar N \geq 1$ such that 
\[TUB(r,\lambda,N) \leq TUB(1,\lambda,N), \quad \hbox{for any } \lambda <\bar \lambda \hbox{ and }N > \bar N . \]
\end{itemize}
\end{proposition}
 \begin{proof} 
Note that $\g_0=\frac{2M\sqrt{1+\bko_r}}{C}$ implying  $\g_N=\frac{\g_0}{\sqrt{N+1}}$. \\
\noindent
(a) \an{When $r = 1$,}
from (\ref{ineq:gap-general-bound1-window}) and Lemma \ref{lemma:ineqHarmonic} we obtain 
 \begin{align*}
\EXP{ \hbox{G}(\bar x_N^\ell(1))} &\leq
\frac{4M^2+\g_0^2C^2\sum_{k=\ell}^{N-1}\frac{1}{k+1}}{\g_0\sum_{k=\ell}^{N-1}\frac{1}{\sqrt{k+1}}}= 2MC \ \frac{1+\sum_{k=\ell}^{N-1}\frac{1}{k+1}}{\sum_{k=\ell}^{N-1}\frac{1}{\sqrt{k+1}}} \\ &\leq 2MC \ \frac{1+(\ell+1)^{-1}+\ln\left( \frac{N}{\ell+1}\right)}{2(\sqrt{N}-\sqrt{\ell+1})} \leq MC \ \frac{1+1+\ln\left( \frac{1}{\lambda}\right)}{\sqrt{N}\left(1-\sqrt{\frac{\lambda N+2}{N}}\right)} = {\cal O}\left(\frac{1}{\sqrt{N}}\right),
\end{align*}
where we use the relation $\lambda N \leq \ell < \lambda N +1$ in the last inequality.\\
(b) \an{When $r<1$, from~(\ref{ineq:gap-general-bound1-window})} and Lemma \ref{lemma:ineqHarmonic} and invoking $\g_0=\frac{2\sqrt{2}M}{C}$, we obtain 
 \begin{align*}
&\EXP{ \hbox{G}(\bar x_N^\ell(r))}  \leq TUB(r,\lambda, N) =   \dfrac{{4M^2\g_0^{r-1}\left((\ell+1)^{\frac{1-r}{2}}+N^{\frac{1-r}{2}}\right)}+C^2\g_0^{r+1}\sum_{k=\ell}^{N-1}(k+1)^{-\frac{r+1}{2}}}{\g_0^{r}\sum_{k=\ell}^{N-1}(k+1)^{-\frac{r}{2}}}\\
& \leq  \dfrac{2\sqrt{2}MCN^{\frac{1-r}{2}}+2\sqrt{2}MC\left((\ell+1)^{-\frac{r+1}{2}}+\dfrac{(N+1)^\frac{1-r}{2}-(\ell+1)^\frac{1-r}{2}}{\frac{1-r}{2}}\right)}{\dfrac{N^{1-\frac{r}{2}}-(\ell+1)^{1-\frac{r}{2}}}{1-\frac{r}{2}}},
\end{align*}
where in the last inequality, we replaced $\g_0$ by its value and used $(\ell+1)^\frac{1-r}{2} \leq N^\frac{1-r}{2}$. We have
 \begin{align}\label{ineq:tub1}
&\EXP{ \hbox{G}(\bar x_N^\ell(r))}\leq TUB(r,\lambda, N) \notag\\
& \leq 2\sqrt{2}MC\left(1-\frac{r}{2}\right) \ \frac{N^{\frac{1-r}{2}}+N^{\frac{1-r}{2}}\left((\ell+1)^{-1}(\frac{\ell+1}{N})^{\frac{1-r}{2}}+\left((1+\frac{1}{N})^\frac{1-r}{2}-(\frac{\ell+1}{N})^\frac{1-r}{2}\right)\frac{2}{1-r}\right)}{N^{1-\frac{r}{2}}\left(1-\left(\frac{\ell+1}{N}\right)^{1-\frac{r}{2}}\right)} \notag\\
& \leq \frac{2\sqrt{2}MC\left(1-\frac{r}{2}\right)}{\sqrt{N}}\ \frac{1+\left(1\times 1+\left((1+\frac{1}{N})^\frac{1-r}{2}-\lambda^\frac{1-r}{2}\right)\frac{2}{1-r}\right)}{1-\left(\frac{\lambda N +2}{N}\right)^{1-\frac{r}{2}}} \notag\\
& \leq \frac{2\sqrt{2}MC\left(1-\frac{r}{2}\right)}{\sqrt{N}}\ \frac{1+\left(1+2^\frac{1-r}{2}\frac{2}{1-r}\right)}{1-\left(\lambda +\frac{2}{N}\right)^{1-\frac{r}{2}}}
\leq \frac{4\sqrt{2}MC\left(1-\frac{r}{2}\right)}{\sqrt{N}}\ \frac{1+\frac{\left(\sqrt{2}\right)^{1-r}}{1-r}}{1-\sqrt{\lambda +\frac{2}{N}}}.
\end{align}
This implies that  \an{$\EXP{ \hbox{G}(\bar x_N^\ell(r))} =\mathcal{O}\left(\frac{1}{\sqrt{N}}\right)$ for any arbitrary $r<1$}.\\
(c)  
\an{To prove this part,} we derive a lower bound for $TUB(1,\lambda,N)$ and compare it with the upper bound we obtained for the case $r<1$ given by \eqref{ineq:tub1}. From (\ref{ineq:gap-general-bound1-window}) and 
Lemma~\ref{lemma:ineqHarmonic}, we have
 \begin{align}\label{ineq:tub2}
TUB(1,\lambda,N) &=
\frac{4M^2+\g_0^2C^2\sum_{k=\ell}^{N-1}\frac{1}{k+1}}{\g_0\sum_{k=\ell}^{N-1}\frac{1}{\sqrt{k+1}}}= 2MC \ \frac{1+\sum_{k=\ell}^{N-1}\frac{1}{k+1}}{\sum_{k=\ell}^{N-1}\frac{1}{\sqrt{k+1}}}  \notag
\\ &\geq 2MC \ \frac{\ln\left( \frac{N+1}{\ell+1}\right)}{(\ell+1)^{-0.5}+2(\sqrt{N+1}-\sqrt{\ell+1})} \notag \\
 &\geq 2MC \ \frac{\ln\left( \frac{1}{\lambda}\right)}{1+2\sqrt{N+1}} \geq   \frac{2MC\ln\left( \frac{1}{\lambda}\right)}{3\sqrt{2N}}.
\end{align}
  From \eqref{ineq:tub1} and \eqref{ineq:tub2}, for any $r<1$ we have \begin{align*}
  \frac{TUB(r,\lambda,N)}{TUB(1,\lambda,N)}&\uvs{\leq} \left(\frac{4\sqrt{2}MC\left(1-\frac{r}{2}\right)}{\sqrt{N}}\ \frac{1+\frac{\left(\sqrt{2}\right)^{1-r}}{1-r}}{1-\sqrt{\lambda +\frac{2}{N}}} \right) / \left( \frac{2MC\ln\left( \frac{1}{\lambda}\right)}{3\sqrt{2N}}\right)\cr &= \frac{12\left(1-\frac{r}{2}\right)\left(1+\frac{\left(\sqrt{2}\right)^{1-r}}{1-r}\right)}{\left(1-\sqrt{\lambda +\frac{2}{N}}\right)\ln\left(\frac{1}{\lambda}\right)} \triangleq \frac{h(r)}{\left(1-\sqrt{\lambda +\frac{2}{N}}\right)\ln\left(\frac{1}{\lambda}\right)} .
  \end{align*}
  Taking the limit from the preceding relation when $\lambda$ goes to zero, it yields that the term 
  $ \frac{TUB(r,\lambda,N)}{TUB(1,\lambda,N)}$ converges to zero. 
  This implies that there exists $\bar \lambda \in (0,1)$ such that for any $\lambda < \bar \lambda$, 
  we have $TUB(r,\lambda,N) < TUB(1,\lambda,N)$. Note that to have $1-\sqrt{\lambda +\frac{2}{N}}>0$ for any $\lambda < \bar \lambda$, it suffices to assume $N > \bar N \triangleq \frac{2}{1-\bar \lambda}$. Hence, the desired result holds.
 \end{proof}
\begin{remark}[{\bf Comparison of the classic window-based averaging with the proposed averaging scheme}] 
The contribution of Proposition \ref{prop:window} is three-fold:
\begin{enumerate}
\item[(i)] First, it
addresses stochastic variational inequalities, a generalization of stochastic convex optimization problems addressed 
\an{in~\cite{nemirovski_robust_2009,Nedic14}.} 
\item[(ii)] Second, it extends the theory of classical
averaging SA schemes (cf.~\cite{nemirovski_robust_2009}) to the weighted averaging regime, 
\an{and considers more general weighted averaging scheme than that of~\cite{Nedic14} for convex optimization (corresponding to $r=-1$).} In particular, 
\an{it shows} that the
optimal rate of convergence is attained not only \an{for} $r=1$, but also
\an{for} an arbitrary parameter $r < 1$.
\item[(iii)] Third, in terms of the finite-time behavior
of the algorithm, for \an{any $r<1$}, we \uvs{may} always choose
the window parameter $\lambda$ such that the constant factor of the error bound
is smaller than that of the \uvs{classical} window-based averaging scheme. This implies
that we should expect a smaller gap value \an{for $r<1$}, when the
value of $\lambda$ is selected small enough. In fact, this will be tested in
our numerical experiments in the next section where we present our simulation
results for a stochastic Nash-Cournot game. 
\end{enumerate} 
\uvs{Finally, we observe that an extreme case arises by choosing
	$\lambda = e^{-2h(r)}$. Through this choice, we obtain a crude bound
		on $\mbox{TUB}(r,\lambda,N)/\mbox{TUB}(1,\lambda,N)$ of the
		following nature:
$$\frac{\mbox{TUB}(r,\lambda,N)}{\mbox{TUB}(1,\lambda,N)} \leq \frac{1}{2 (1-\sqrt{e^{-h(r)} + 2/N})} \approx {\frac{1}{2}},$$
where $h(r) >> 1$ and $N >> 1$. In effect, when the window size is
chosen in accordance with the choice of $r$, then the upper bound is
improved by approximately 50\% over the case when $r = 1$. We observe
that in this case, the window size tends to $N$ and we recover the full
averaging SA schemes.} 
\end{remark}}
 
\section{Numerical Results}\label{sec:num}
\us{In this section, we compare the performance of our schemes through a
set of computational experiments conducted on a stochastic Nash-Cournot game.  In Section \ref{ex:st-nc}, we  introduce the stochastic Nash-Cournot game
and derive the (sufficient) equilibrium conditions, which are compactly
stated as as a stochastic variational inequality.} In Section
\ref{sec:numerics-RSSA}, the simulation results for  the
\ref{algorithm:IRLSA-impl} scheme are presented  and support the
asymptotic a.s. and mean-square convergence results from Theorem
\ref{prop:almost-sure} and Proposition~\ref{prop:expectation}, respectively.
Next, in Section \ref{sec:numerics-ASA}, we provide the simulation
results the \ref{algorithm:IRLSA-averaging} scheme across different
values of parameter $r$. Throughout this section, we use the gap
function's evaluation as the metric for our comparisons. To calculate
the gap value, we use the commercial solver {\sc Knitro}~\cite{knitro}. 

\subsection{A networked stochastic Nash-Cournot game}\label{ex:st-nc}
A classical example of a Nash game is a networked Nash-Cournot
game~\cite{metzler03nash-cournot,kannan10online}.
In this problem, there are $\cal{I}$ firms that compete over a network
of $\cal{J}$ nodes in selling a  product. Each firm $i$ wants to
maximize profit by choosing nodal production at every node $j$, denoted
by $g_{ij}$, and the level of sales at node $j$, denoted by $s_{ij}$.
Let $\bar s_j = \sum_{i=1}^{\an{\cal I}} s_{ij}$ denote the aggregate sales at node
$j$. \us{By the Cournot structure}, we assume that the price at node $j$, denoted by  $P_j(\bar s_j,\xi)$, is a nonlinear stochastic function of the form $a_j-b_j\bar s_j^\sigma$, where $a_j$ is a uniform random variable drawn from $[lb_{a_j},ub_{a_j}]$, and $b_j$  and $\sigma \geq 1$
are constants. Furthermore, we assume the firm $i$'s cost of production at node
$j$ is denoted by the $C_{ij}(g_{ij})\triangleq c_{j}g_{ij} +d_{j}$, where $c_j$ and $d_j$ are constants. Other than the nonnegativity  constraints for $s_{ij}$ and $g_{ij}$, there are two types of constraints. Firm $i$'s production at node $j$ is capacitated
by $\mathrm{cap}_{ij}$. Also, the aggregated level of sales of each firm is equal to the aggregated level of production. Therefore, firm $i$'s optimization problem is given by the
following (Note that we assume transportation costs are zero): 
\begin{align*}
\displaystyle \min_{x_i \in X_i} & \qquad {\EXP{f_i(x,\xi)}},
\end{align*}
where {$x=(x_1;\ldots; x_{{\cal I}})$} with $x_i=(g_i; s_i)$, {$g_i=(g_{i1};\ldots;g_{i{\cal J}})$, 
$s_i=(s_{i1};\ldots;s_{i{\cal J}})$}, 
\begin{align*}
f_i(x,\xi) & \triangleq \sum_{j =1}^{\cal J} \left( C_{ij}
(g_{ij}) - P_j(\bar s_j,\xi)
s_{ij}\right), 
\\ \mbox{ and } X_i & \triangleq \left\{
(g_i,s_i) \mid \sum_{j=1}^{\cal J}
g_{ij} = \sum_{j=1}^{\cal J} s_{ij}, \quad g_{ij},\,s_{ij} \geq 0,\quad g_{ij} \leq \mathrm{cap}_{ij}, \hbox{ for all }\ j = 1,
\hdots, {\cal J}\right\}. 
\end{align*}
Applying the interchange between the expectation and the
derivative operator, the resulting equilibrium conditions of the preceding stochastic Nash-Cournot
game can be compactly captured by the stochastic variational inequality VI$(X,F)$
where 
$ X \triangleq \prod_{i=1}^{\cal I} X_i$ and {$F(x) = (F_1(x);\ldots; F_{\cal I}(x))$}
with $F_i(x)=\EXP{\nabla_{x_i} f_i(x,\xi)}$. Note that it can be shown that when $1< \sigma \leq 3$ and ${\cal I} \leq \frac{3\sigma-1}{\sigma -1}$, or $\sigma=1$, the mapping $F$ is strictly monotone. We consider a Cournot competition with $5$ firms and $4$ nodes, i.e., ${\cal I}=5$ and ${\cal J}=4$. We assume $\sigma=1$, $[lb_{a_j},ub_{a_j}]=[49.5,50.5]$, $cap_{ij}=300$, $b_j=0.05$, $c_{j}=1.5$ for all $i$ and $j$. Throughout this section, we assume the mean and the standard deviation of the gap function is calculated using a sample of size $50$. Also, we assume the starting point of algorithms is the origin, unless stated otherwise. 

Throughout this section we use the following notation: $N$
denotes the simulation length in the scheme, $x_0$ denotes the
starting point of the algorithm. Furthermore, the gap function is given
by Defintion~\ref{def:gap1}. We examine both the RSSA scheme, its averaged variant
given by aRSSA$_r$ for different values of $r$ as well as the
window-based variant denoted by aRSSA$_{\ell,r}$. In the
aRSSA$_{\ell,r}$ scheme, $\ell$ is assumed to be equal to $\lceil \lambda
N\rceil$ where $0<\lambda<1$ is a constant. 

\subsection{Convergence of the RSSA scheme}\label{sec:numerics-RSSA}
In this section, we present the simulation results for the RSSA
scheme and  report the performance of the algorithm using the sample mean
and sample standard deviation of the gap function. Table \ref{tab:RSSA}
shows the results for $4000$ iterations. 
\an{For the stepsize $\g_k$, regularization parameter $\eta_k$, and the smoothing parameter
$\e_k$, we use $\g_k={\g_0}(k+0.1N)^{-a}$,
	$\eta_k={\eta_0}{(k+1)^{-b}}$, and $\e_k={\e_0}{(k+1)^{-c}}$
	where $\g_0=1$, $\eta_0=10^{-4}$ and $\e_0=10^{-2}$.} 
Note that the
	term $0.1N$ is added in the stepsize to stabilize the performance of the SA scheme.
	Furthermore, \us{we chose $\eta_0$ and $\e_0$ to be smaller when $b$
		and $c$ are small}, respectively. In
	the first $9$ settings, our goal is to study the sensitivity of the
	RSSA algorithm with respect to the parameters \an{$a$, $b$, $c$, where we use $S(p)$ to denote a particular setting of
these parameters.} In
	these settings, the values of the parameters given in the table
	satisfy conditions of both Lemma \ref{lemma:stepsize_a.s.} and Lemma
	\ref{lemma:stepsize_exp}. In the first three settings, we increase
	$a$ and keep $b$ and $c$ unchanged. In the second group, $b$ is
	increasing, while in the third group $c$ is increasing. We observe
	that increasing $a$ slows down the convergence of the gap function,
	but increasing $b$ or $c$ speeds up the convergence of the gap
	function slightly. This makes sense because the optimal rate of
	convergence is attained at $a=0.5$. On the other hand, by making $b$
	or $c$, larger the regularization and smoothing sequences decay to
	zero faster implying that the perturbations  introduced in the SA
	algorithm due to regularization \us{and} smoothing techniques are fading
	out. We also observe that  the average value of the gap function is
	more sensitive to the change in the parameter $a$ while being more
	robust to the changes in $b$ or $c$.
\begin{table}[htb] 
\tiny
\centering 
\begin{tabular}{|c|c|c c c|c c|} 
\hline 
- & -& \multicolumn{3}{c|}{Parameters} &  \multicolumn{2}{c|}{Gap function} 
\\ 
\hline
\an{S(p)}&- & $a$ & $b$ & $c$ & mean & std \\
\hline
 1&  &$0.501$& $0.099$& $0.200$&  $6.12$e$-3$ & $2.68$e$-3$
  \\ 
2& a &$0.600$& $0.099$ & $0.200$ &  $1.03$e$-1$ & $9.48$e$-3$
  \\ 
  
3&  &$0.700$& $0.099$ &$0.200$&  $6.90$e$+1$ & $1.74$e$-1$
  \\ 
  \hline  
4&  &$0.501$& $0.100$& $0.167$&  $6.37$e$-3$ & $3.56$e$-3$ 
  \\ 
  
5& b&$ 0.501$& $0.130$ &$0.167$& $4.89$e$-3$ & $2.68$e$-3$  
  \\

  6&  &$0.501$& $0.166$ & $0.167$ & $4.33$e$-3$ & $2.11$e$-3$ 
  \\ \hline
7&  &$0.501$& $0.166$ & $0.130$ & $5.02$e$-3$ & $2.65$e$-3$  
  \\ 
8& c &$0.501$& $0.166$ & $0.100$ & $5.05$e$-3$ & $2.42$e$-3$ 
  \\ 
9&  &$0.501$& $0.166$ & $0.167$ & $4.33$e$-3$ & $2.11$e$-3$ 
  \\ 
\hline

10& - &$0.401$& $0.200$& $0.100$& $7.78$e$-3$ & $5.04$e$-3$
  \\ 
  \hline
11&- &$0.600$& $0.133$ & $0.099$&  $9.42$e$-2$ & $7.56$e$-3$
  \\ 
\hline
\end{tabular} 
\caption{RSSA algorithm with different settings of parameters $a$, $b$, and $c$.} 
\label{tab:RSSA} 
\end{table}
 In setting $S(10)$, the parameters ensure convergence in the
 mean-squared sense provided by Lemma~\ref{lemma:stepsize_exp} but do
 not suffice in ensuring almost sure convergence provided in Lemma
 \ref{lemma:stepsize_a.s.}. In the setting $S(11)$, the converse holds.
 Figure \ref{fig:GAP3} illustrates the sample mean of gap function
 \us{over the set of simulations} for settings $S(10)$ and $S(11)$. Both
 plots show the sample mean for the gap function. The round dots in
 these plots represent the  observed gap values for each of the 50
 sample paths at every $100$ iterations.
 \begin{figure}[htb]
 \centering
  \subfloat[Setting $S(10)$ -- $(a,b,c)=(0.401,0.200,0.100)$]{\label{fig:mean}\includegraphics[scale=.50, angle=0]{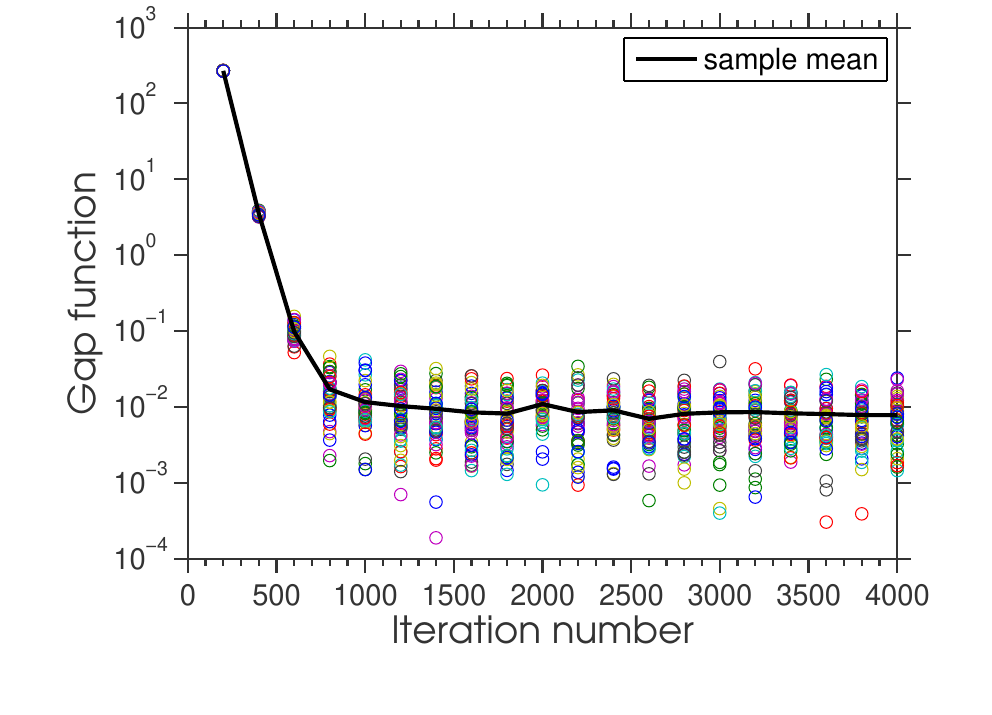}}
 \subfloat[Setting $S(11)$ -- $(a,b,c)=(0.600,0.133,0.099)$]{\label{fig:as}\includegraphics[scale=.50,  angle=0]
 {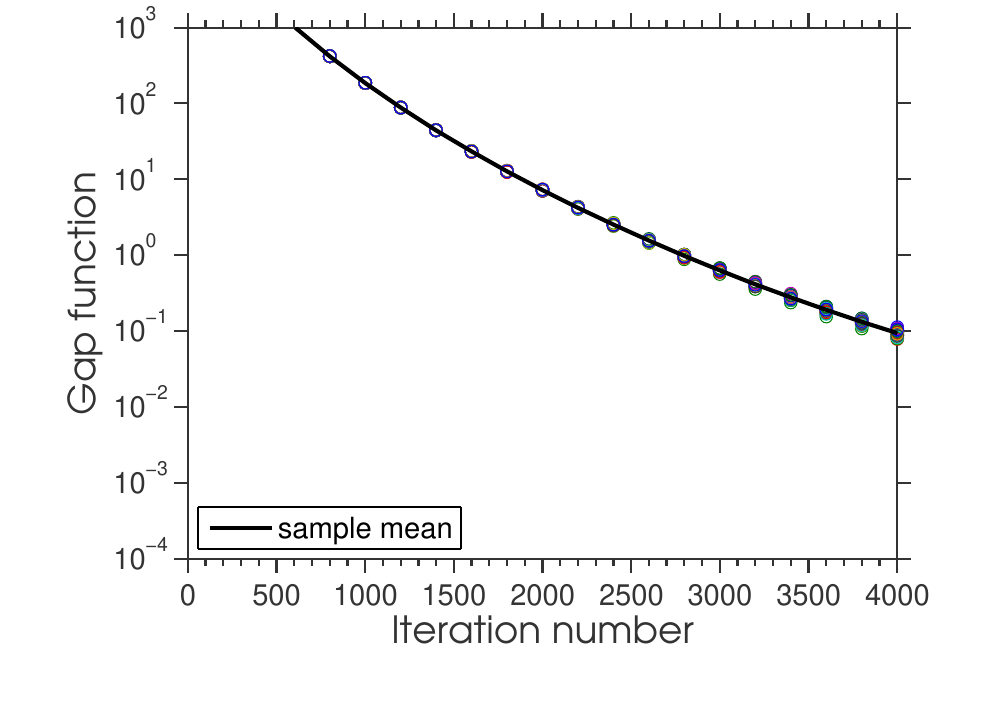}}

\caption{Convergence of RSSA algorithm with different settings of parameters $a$, $b$, $c$.}
\label{fig:GAP3}
\end{figure}

\begin{figure}[htb]
  \centering
  \includegraphics[scale=.40, angle=0]{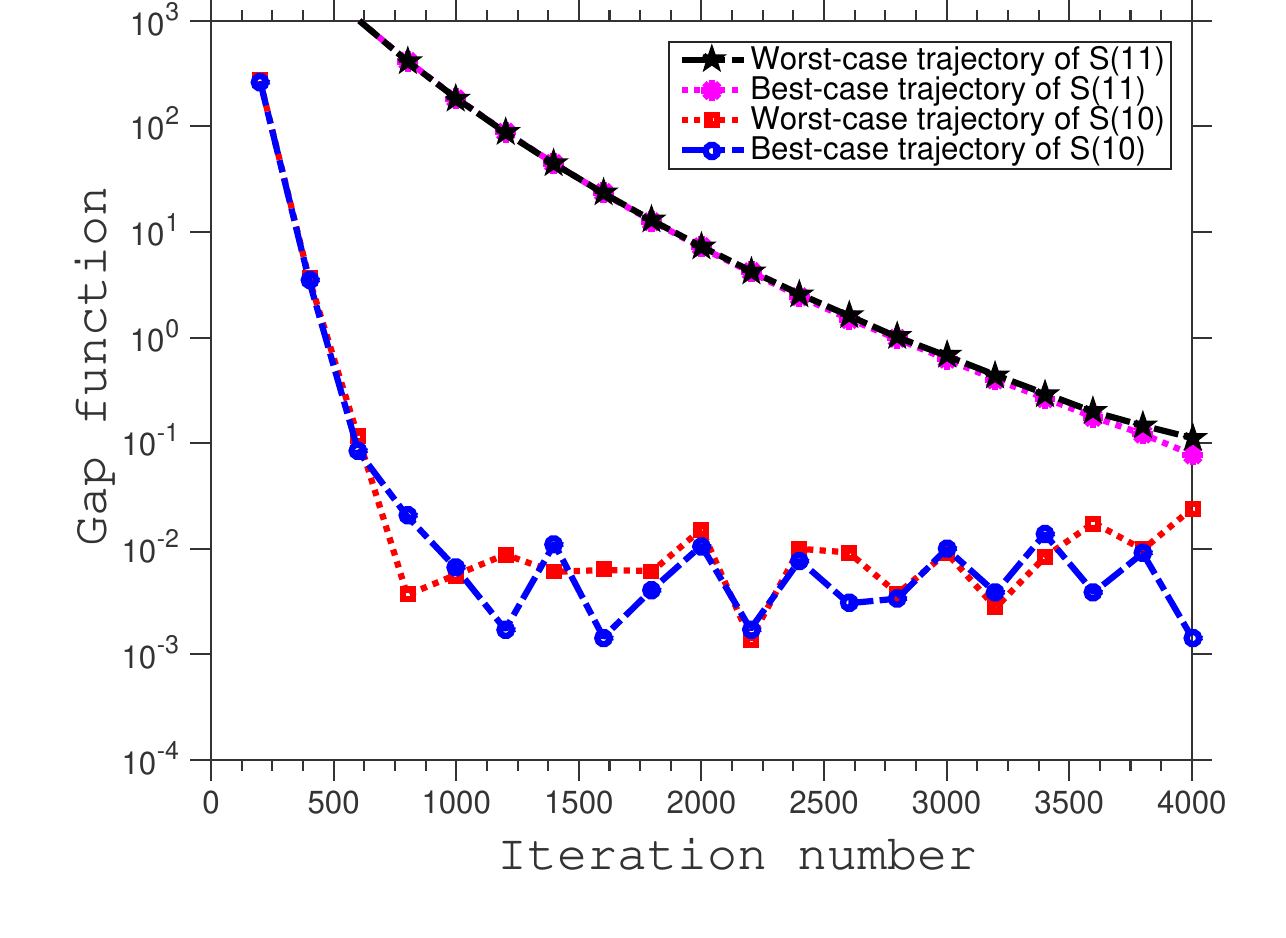}
  \vspace{-.3in}
  \caption{Convergence in mean vs. a.s. convergence: worst-case and best-case trajectories}
  \label{fig:GAP3-2}
\end{figure}

We observe from Figure~\ref{fig:GAP3}(a), that
although the mean gap function is approaching zero, the variance
across sample paths is relatively large. 
This observation is aligned with the knowledge that the choice of $(a,b,c)$
do not guarantee almost sure convergence for $S(10)$.  However, in
Figure~\ref{fig:GAP3}(b) the conditions of almost sure convergence are
met, we observe that the variance \uvs{in the gap function at the
	terminal iterate} is far
	smaller and all of the $50$ trajectories remain close to the sample
	mean. \uvs{From a practical standpoint, this suggests that almost
		every sample path will show similar performance.} \fyRev{Figure~\ref{fig:GAP3-2} illustrates the worst-case and the best-case sample paths among all of the $50$ trajectories for each of the settings $S(10)$ and $S(11)$. We observe that the variance of the gap function's value for the setting $S(10)$ is significantly larger than that of the setting $S(11)$.} 

\subsection{Convergence of the \fyRev{aSA$_r$ and aSA$_{\ell,r}$} schemes}\label{sec:numerics-ASA}
\uvs{Next, we compare} the performance of the averaging schemes
across different values of $r$. Motivated by \an{Proposition~\ref{prop:window}},
the stepsize used in our analysis is assumed to be of the form
$\g_k=\frac{2M}{C\sqrt{k+1}}$ where $M$ is the bound on the Euclidean
norm of $x \in X$ and $C$ represents the bound on the norm on mapping
$F$ over the set $X$. Note that here we use identical stepsizes for $r =
1$ and $r = -1$, to allow for using the \uvs{same set of} iterates generated by
the SA algorithm for both schemes. In
Table~\ref{tab:ave1}, we report the sample mean of the gap function
over $50$ samples. The rows \an{in Table~\ref{tab:ave1}} correspond to the value of the parameter
$\lambda$ which changes from $0$ to $1$ in steps of $0.1$. \an{Note that $\lambda=0$
implies that $\ell=0$, which corresponds to} the \fyRev{aSA$_r$} scheme. Moreover,
$\lambda=1$ corresponds to the SA scheme without averaging since
$\ell=N$ in this case. Cases when $\lambda$ is between $0$ and
$1$ correspond to the \fyRev{aSA$_{r,\ell}$} scheme. The columns in the table are sorted
based on the iteration number $N$ from $1000$ to $4000$. Each column
includes the results for the case that $r=-1$ and the standard choice
$r=+1$.

\begin{table}[htb] 
\tiny
\centering 
\begin{tabular}{|c|c|c c|c c|c c|c c|} 
\hline 
\multicolumn{2}{|c|}{Scheme} & \multicolumn{2}{c|}{N=1000} &  \multicolumn{2}{c|}{N=2000} &\multicolumn{2}{c|}{N=3000} &  \multicolumn{2}{c|}{N=4000} 
\\ 
\hline
-& $\lambda$&  $r=-1$ & $r=+1$ &$r=-1$ & $r=+1$ &$r=-1$ & $r=+1$ & $r=-1$ & $r=+1$ \\
\hline

aSA$_r$& $0$& $3.85$e$+1$ & $ 1.19$e$+3 $ & $ 5.44$e$+0 $ & $ 5.93$e$+2 $ & $ 1.64$e$+0 $ & $ 3.93$e$+2 $ & $ 6.94$e$-1 $ & $ 2.94$e$+2 $ \\
\hline
 & $0.1$&$1.97$e$+1 $ & $ 7.30$e$+1 $ & $ 1.49$e$+0 $ & $ 8.12$e$+0 $ & $ 2.47$e$-1 $ & $ 1.68$e$+0 $ & $ 5.96$e$-2 $ & $ 4.67$e$-1 $ \\

 & $0.2$&$1.05$e$+1 $ & $ 2.31$e$+1 $ & $ 5.01$e$-1 $ & $ 1.40$e$+0 $ & $ 5.59$e$-2 $ & $ 1.80$e$-1 $ & $ 9.66$e$-3 $ & $ 3.38$e$-2 $ \\

 & $0.3$&$6.03$e$+0 $ & $ 9.92$e$+0 $ & $ 2.01$e$-1 $ & $ 3.88$e$-1 $ & $ 1.68$e$-2 $ & $ 3.55$e$-2 $ & $ 2.51$e$-3 $ & $ 5.23$e$-3 $ \\

 & $0.4$&$3.68$e$+0 $ & $ 5.04$e$+0 $ & $ 9.16$e$-2 $ & $ 1.39$e$-1 $ & $ 6.40$e$-3 $ & $ 1.01$e$-2 $ & $ 1.14$e$-3 $ & $ 1.57$e$-3 $ \\

aSA$_{\ell,r}$ & $0.5$&$2.35$e$+0 $ & $ 2.85$e$+0 $ & $ 4.62$e$-2 $ & $ 5.97$e$-2 $ & $ 3.14$e$-3 $ & $ 3.99$e$-3 $ & $ 8.46$e$-4 $ & $ 9.25$e$-4 $ \\

 & $0.6$&$1.56$e$+0 $ & $ 1.75$e$+0 $ & $ 2.55$e$-2 $ & $ 2.94$e$-2 $ & $ 2.04$e$-3 $ & $ 2.25$e$-3 $ & $ 8.57$e$-4 $ & $ 8.71$e$-4 $ \\

 & $0.7$&$1.08$e$+0 $ & $ 1.14$e$+0 $ & $ 1.55$e$-2 $ & $ 1.66$e$-2 $ & $ 1.70$e$-3 $ & $ 1.75$e$-3 $ & $ 1.00$e$-3 $ & $ 9.96$e$-4 $ \\

 & $0.8$&$7.61$e$-1 $ & $ 7.79$e$-1 $ & $ 1.04$e$-2 $ & $ 1.06$e$-2 $ & $ 1.75$e$-3 $ & $ 1.77$e$-3 $ & $ 1.26$e$-3 $ & $ 1.26$e$-3 $ \\

 & $0.9$&$5.52$e$-1 $ & $ 5.55$e$-1 $ & $ 7.64$e$-3 $ & $ 7.69$e$-3 $ & $ 1.84$e$-3 $ & $ 1.84$e$-3 $ & $ 1.60$e$-3 $ & $ 1.60$e$-3 $ \\
\hline
SA & $1$&$4.12$e$-1 $ & $ 4.12$e$-1 $ & $ 6.04$e$-3 $ & $ 6.04$e$-3 $ & $ 2.07$e$-3 $ & $ 2.07$e$-3 $ & $ 2.30$e$-3 $ & $ 2.30$e$-3 $ \\
\hline
\end{tabular} 
\caption{Gap function's comparison between  $r=-1$ and $r=+1$.} 
\label{tab:ave1} 
\end{table}
Naturally, when $\lambda = 1$, there is no averaging and in the last row
of the table, the gap value is identical for both $r=-1$ and $r=+1$. Importantly, we see
that for most values of $N$ and $\lambda$, both the \fyRev{aSA$_r$ scheme and the aSA$_{\ell,r}$}  scheme have
lower gaps when $r=-1$ \uvs{compared with $r=1$}. \uvs{In fact}, when
$\lambda$ is small \uvs{and the averaging window is large}, this difference becomes
\uvs{even more pronounced}. For example, \uvs{in} the case that $\lambda=0$ and $N=1000$, the
gap value for aSA$_{\ell,r}$ scheme with $r=-1$ is about $39$, while this gap 
is nearly two orders of magnitude larger at $1190$ when $r=+1$. We show this difference in Figure \ref{fig:GAP1}
and Figure \ref{fig:GAP2}. It can be seen that when comparing both
averaging schemes, both \fyRev{aSA$_r$ and aSA$_{\ell,r}$} have a lower gap
for $r = -1$ vs $r = 1$ for any of the examined values of $N$. 
		The
		smaller the value of $\lambda$, the \uvs{larger the window over
			which averaging is carried out}, implying the
	more robustness of the SA scheme. Therefore, there is a trade-off
	between increasing $\lambda$ and the robustness of SA scheme. When
	$\lambda$ is large, although the difference between the performance
	of $r=-1$ and $r=+1$ becomes small, \an{the case $r=-1$ almost always has a smaller gap value
	than the case $r=1$, as shown in Table~\ref{tab:ave1}. }

\begin{figure}[htb]
 \centering
 \subfloat[$\lambda=0$]{\label{fig:lambda0}\includegraphics[scale=.50,  angle=0]
 {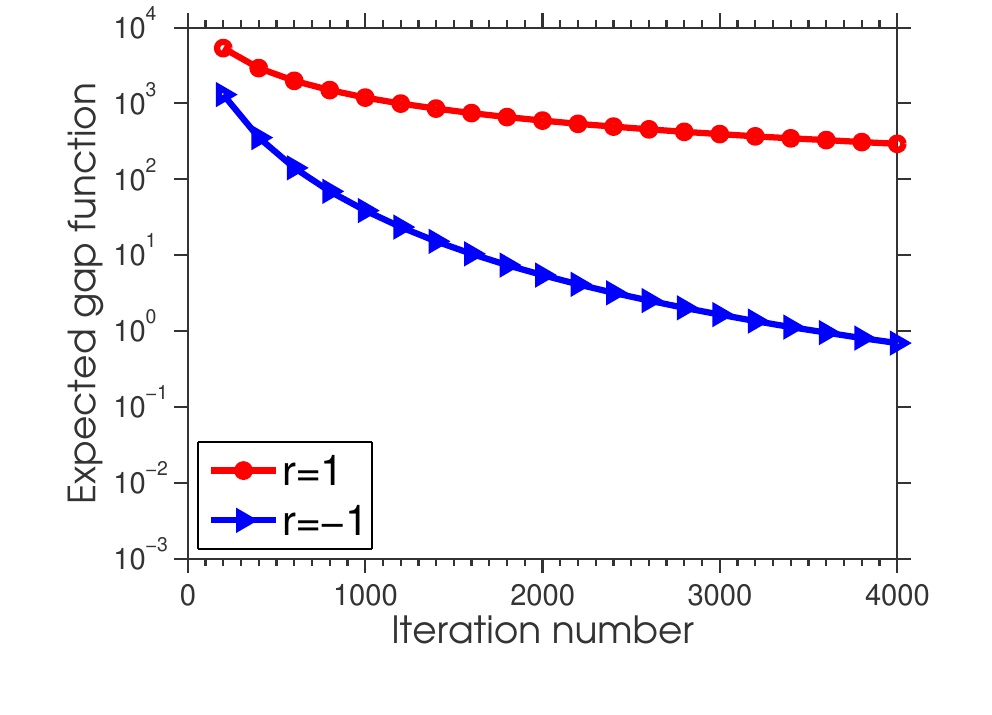}}
 \subfloat[$\lambda=0.1$]{\label{fig:lambda1}\includegraphics[scale=.50, angle=0]{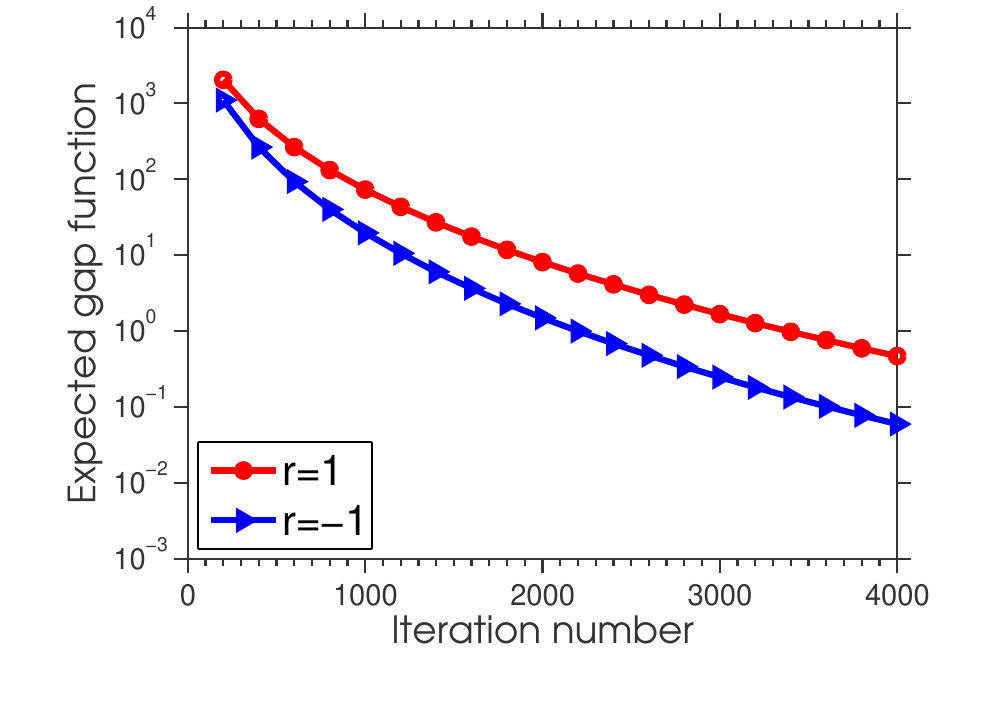}}
\caption{Gap function: \fyRev{aSA$_r$ scheme (l)  and
	aSA$_{\ell,r}$ }schemes with $\lambda=0.1$ (r)}
\vspace{-0.2in}
\label{fig:GAP1}
\end{figure}
\begin{figure}[htb]
 \centering
 \subfloat[$\lambda=0.2$]{\label{fig:lambda2}\includegraphics[scale=.50,  angle=0]
 {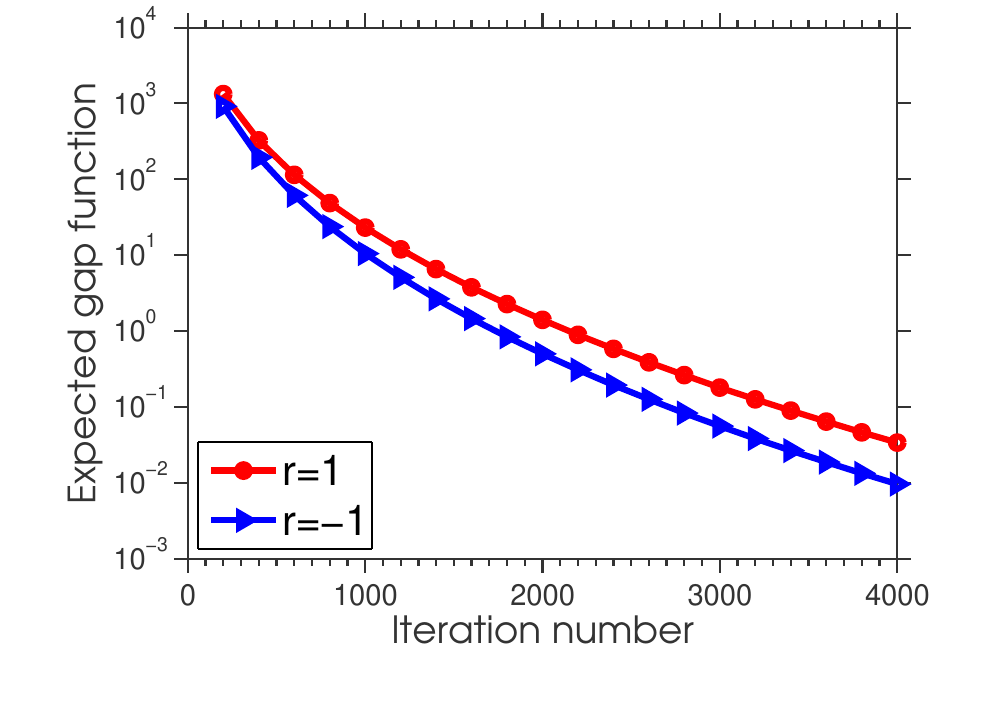}}
 \subfloat[$\lambda=0.3$]{\label{fig:lambda3}\includegraphics[scale=.50, angle=0]{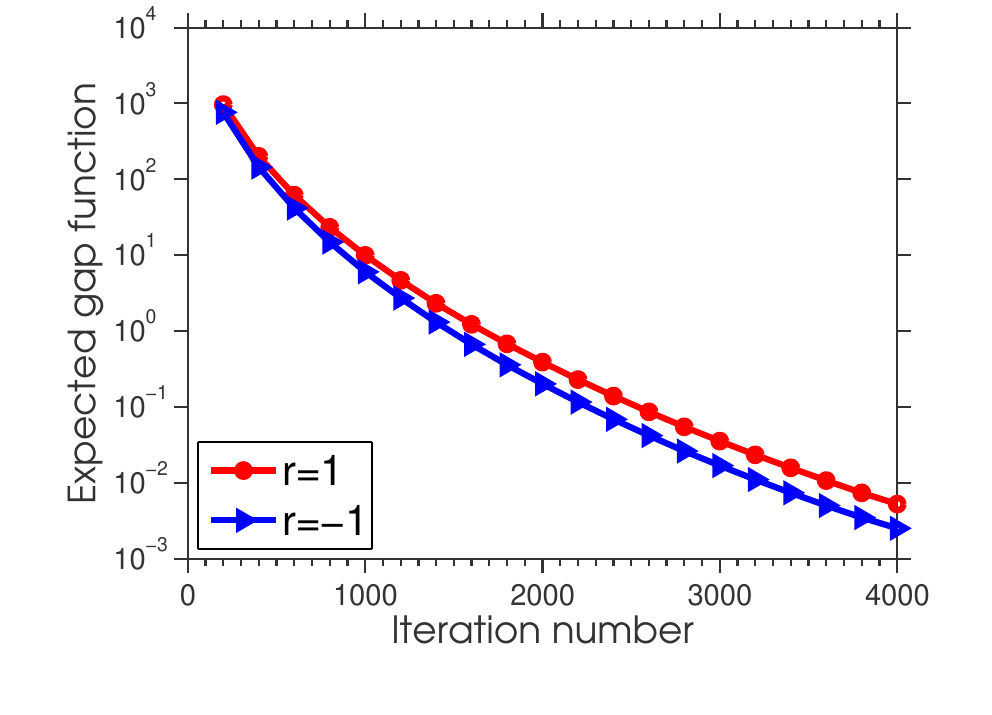}}

\caption{Gap function: \fyRev{aSA$_{r,\ell}$} scheme with $\lambda = 0.2$ (l)
	and $\lambda=0.3$.}
\label{fig:GAP2}
\vspace{-0.2in}
\end{figure}
 One question that may arise \an{is the optimal choice} of $\lambda$
 \uvs{in the design of} the aSA$_{\ell,r}$ scheme. We observe that the
 answer to this question depends on $N$. When $N$ is small, in this case
 $1000$, the SA scheme (\uvs{essentially no averaging}) \uvs{displays
	 the minimal } error implying that $\lambda=1$ performs the best.
	 However, for larger values of $N$, \uvs{the minimal} error occurs
	 \uvs{at a } smaller $\lambda$ and the larger the value of $N$, the smaller the value of $\lambda$. For example, at $N=4000$, $\lambda=0.5$ has the smallest error.
\subsection{Sensitivity analysis}\label{sec:numerics-ASA2}
In this section we investigate the performance of the averaging schemes
when some parameters of the Cournot game change. First, we increase the
number of firms from $5$ to $15$ and maintain other parameters fixed.
Table~\ref{tab:ave2} shows the simulation results for the new problem
with $15$ firms. We observe that the results are similar to the case
where ${\cal I}=5$. Importantly, \us{for} almost any $N$ and any $\lambda <1$,
	  the averaging schemes perform better with $r=-1$ than with $r=+1$.
	  Specifically, when $\lambda$ is small this difference is
	  significant.  
\begin{table}[htb] \tiny  
\centering 
\begin{tabular}{|c|c|c c|c c|c c|c c|} 
\hline 
\multicolumn{2}{|c|}{Scheme} & \multicolumn{2}{c|}{N=1000} &  \multicolumn{2}{c|}{N=2000} &\multicolumn{2}{c|}{N=3000} &  \multicolumn{2}{c|}{N=4000} 
\\ 
\hline
-& $\lambda$&  $r=-1$ & $r=+1$ &$r=-1$ & $r=+1$ &$r=-1$ & $r=+1$ & $r=-1$ & $r=+1$ \\
\hline
aSA$_{r}$ &$	0	$ & $	5.78$e$+1	$ & $	1.47$e$+3	$ & $	8.43$e$+0	$ & $	7.33$e$+2	$ & $	2.56$e$+0	$ & $	4.86$e$+2	$ & $	1.09$e$+0	$ & $	3.63$e$+2	$ \\ \hline
&$	0.1	$ & $	3.19$e$+1	$ & $	1.11$e$+2	$ & $	2.67$e$+0	$ & $	1.36$e$+1	$ & $	4.75$e$-1	$ & $	3.02$e$+0	$ & $	1.22$e$-1	$ & $	8.93$e$-1	$ \\
&$	0.2	$ & $	1.81$e$+1	$ & $	3.80$e$+1	$ & $	9.91$e$-1	$ & $	2.66$e$+0	$ & $	1.23$e$-1	$ & $	3.79$e$-1	$ & $	2.29$e$-2	$ & $	7.73$e$-2	$ \\
&$	0.3	$ & $	1.09$e$+1	$ & $	1.74$e$+1	$ & $	4.29$e$-1	$ & $	8.05$e$-1	$ & $	4.04$e$-2	$ & $	8.32$e$-2	$ & $	6.18$e$-3	$ & $	1.31$e$-2	$ \\
&$	0.4	$ & $	6.90$e$+0	$ & $	9.29$e$+0	$ & $	2.07$e$-1	$ & $	3.09$e$-1	$ & $	1.59$e$-2	$ & $	2.50$e$-2	$ & $	2.38$e$-3	$ & $	3.60$e$-3	$ \\
aSA$_{\ell,r}$&$	0.5	$ & $	4.56$e$+0	$ & $	5.47$e$+0	$ & $	1.09$e$-1	$ & $	1.39$e$-1	$ & $	7.51$e$-3	$ & $	9.69$e$-3	$ & $	1.36$e$-3	$ & $	1.64$e$-3	$ \\
&$	0.6	$ & $	3.12$e$+0	$ & $	3.46$e$+0	$ & $	6.12$e$-2	$ & $	7.06$e$-2	$ & $	4.29$e$-3	$ & $	4.87$e$-3	$ & $	1.03$e$-3	$ & $	1.09$e$-3	$ \\
&$	0.7	$ & $	2.20$e$+0	$ & $	2.32$e$+0	$ & $	3.67$e$-2	$ & $	3.95$e$-2	$ & $	3.01$e$-3	$ & $	3.18$e$-3	$ & $	1.02$e$-3	$ & $	1.03$e$-3	$ \\
&$	0.8	$ & $	1.59$e$+0	$ & $	1.63$e$+0	$ & $	2.35$e$-2	$ & $	2.42$e$-2	$ & $	2.42$e$-3	$ & $	2.47$e$-3	$ & $	1.21$e$-3	$ & $	1.20$e$-3	$ \\
&$	0.9	$ & $	1.18$e$+0	$ & $	1.18$e$+0	$ & $	1.58$e$-2	$ & $	1.59$e$-2	$ & $	2.32$e$-3	$ & $	2.32$e$-3	$ & $	1.55$e$-3	$ & $	1.55$e$-3	$ \\ \hline
SA&$	1	$ & $	8.89$e$-1	$ & $	8.89$e$-1	$ & $	1.22$e$-2	$ & $	1.22$e$-2	$ & $	2.71$e$-3	$ & $	2.71$e$-3	$ & $	2.04$e$-3	$ & $	2.04$e$-3	$ \\
\hline
\end{tabular} 
\caption{Comparison of \an{gap} function for ${\cal I} = 15$.}   
\label{tab:ave2}  
\vspace{-0.1in}
\end{table}
Next, we  assume that $x_0$ for every sample path is a point where
$s_{ij}=g_{ij}=150$ for any $i,j$, rather than the origin.
Table \ref{tab:ave4} provides the simulation results for this case. The
performance of all the schemes is similar to the original setting. We
observe that the averaging schemes have a smaller expected gap function
when $r=-1$ than when $r=+1$. 
\begin{table}[htb] \tiny \centering 
\begin{tabular}{|c|c|c c|c c|c c|c c|} 
\hline 
\multicolumn{2}{|c|}{Scheme} & \multicolumn{2}{c|}{N=1000} &  \multicolumn{2}{c|}{N=2000} &\multicolumn{2}{c|}{N=3000} &  \multicolumn{2}{c|}{N=4000} 
\\ 
\hline
-& $\lambda$&  $r=-1$ & $r=+1$ &$r=-1$ & $r=+1$ &$r=-1$ & $r=+1$ & $r=-1$ & $r=+1$ \\
\hline

aSA$_{r}$&$	0	$ & $	3.38$e$+1	$ & $	1.24$e$+3	$ & $	4.63$e$+0	$ & $	5.72$e$+2	$ & $	1.34$e$+0	$ & $	3.56$e$+2	$ & $	5.42$e$-1	$ & $	2.52$e$+2	$ \\ \hline
&$	0.1	$ & $	1.76$e$+1	$ & $	6.50$e$+1	$ & $	1.32$e$+0	$ & $	7.22$e$+0	$ & $	2.20$e$-1	$ & $	1.49$e$+0	$ & $	5.35$e$-2	$ & $	4.17$e$-1	$ \\
&$	0.2	$ & $	9.36$e$+0	$ & $	2.05$e$+1	$ & $	4.43$e$-1	$ & $	1.25$e$+0	$ & $	4.95$e$-2	$ & $	1.60$e$-1	$ & $	8.78$e$-3	$ & $	3.00$e$-2	$ \\
&$	0.3	$ & $	5.38$e$+0	$ & $	8.85$e$+0	$ & $	1.76$e$-1	$ & $	3.42$e$-1	$ & $	1.49$e$-2	$ & $	3.11$e$-2	$ & $	2.39$e$-3	$ & $	4.74$e$-3	$ \\
&$	0.4	$ & $	3.28$e$+0	$ & $	4.50$e$+0	$ & $	7.92$e$-2	$ & $	1.21$e$-1	$ & $	5.73$e$-3	$ & $	8.85$e$-3	$ & $	1.16$e$-3	$ & $	1.54$e$-3	$ \\
aSA$_{\ell,r}$&$	0.5	$ & $	2.10$e$+0	$ & $	2.54$e$+0	$ & $	3.94$e$-2	$ & $	5.12$e$-2	$ & $	2.91$e$-3	$ & $	3.62$e$-3	$ & $	9.02$e$-4	$ & $	9.84$e$-4	$ \\
&$	0.6	$ & $	1.39$e$+0	$ & $	1.56$e$+0	$ & $	2.15$e$-2	$ & $	2.49$e$-2	$ & $	2.01$e$-3	$ & $	2.19$e$-3	$ & $	8.94$e$-4	$ & $	9.09$e$-4	$ \\
&$	0.7	$ & $	9.56$e$-1	$ & $	1.01$e$+0	$ & $	1.29$e$-2	$ & $	1.38$e$-2	$ & $	1.76$e$-3	$ & $	1.81$e$-3	$ & $	1.03$e$-3	$ & $	1.02$e$-3	$ \\
&$	0.8	$ & $	6.74$e$-1	$ & $	6.90$e$-1	$ & $	8.62$e$-3	$ & $	8.84$e$-3	$ & $	1.78$e$-3	$ & $	1.80$e$-3	$ & $	1.29$e$-3	$ & $	1.28$e$-3	$ \\
&$	0.9	$ & $	4.89$e$-1	$ & $	4.92$e$-1	$ & $	6.47$e$-3	$ & $	6.50$e$-3	$ & $	1.80$e$-3	$ & $	1.81$e$-3	$ & $	1.62$e$-3	$ & $	1.62$e$-3	$ \\ \hline
SA&$	1	$ & $	3.60$e$-1	$ & $	3.60$e$-1	$ & $	5.79$e$-3	$ & $	5.79$e$-3	$ & $	2.05$e$-3	$ & $	2.05$e$-3	$ & $	2.32$e$-3	$ & $	2.32$e$-3	$ \\
\hline
\end{tabular} 
\caption{Comparison of \an{gap} function for different starting points.} 
\label{tab:ave4}
\vspace{-0.1in}
\end{table}
\us{Lastly, we are interested in observing the performance of the averaging
schemes for other choices of $r$. In \an{Proposition~\ref{prop:optimal-rate}(b)}, we
showed that the optimal rate of convergence is attained when $r<1$. Our
goal is to compare the case $r=+1$ with two other cases where $r=-0.5$
and $r=+0.5$. In this study, we used the original settings of
parameters. Table \ref{tab:ave5} presents the results of this
simulation. Interestingly, comparing these results with those in Table
\ref{tab:ave1}, we see that both cases $r=-0.5$ and $r=+0.5$ have a
superior performance to $r=+1$. It is worth noting that when $\lambda
\leq 0.6$, $r = -0.5$ tends to perform better than $r = 0.5$. A natural question that emerges is the
best choice of $r$. While one may conjecture that that when $r<1$, the
performance of the averaging scheme improves as $r$ tends to $-\infty$,
			\uvs{this may not be true.} Consider a setting when $r$ goes to
				$-\infty$. Consequently, $\bar
x_{N}$ tends to $x_{N-1}$ implying the SA scheme represents the case
with $r=-\infty$. However, for example in Table \ref{tab:ave1}, when
$r=-1$ and $\lambda=0.5$ the \fyRev{aSA$_{\ell,r}$} scheme performs better than the SA
scheme. Therefore, decreasing $r$ \uvs{may} not necessarily speed up the
convergence of the gap {function}.  Finding the best choice for $r$ requires
more analysis and remains the subject of future research.}
\begin{table}[htb] 
\centering  \tiny 
\begin{tabular}{|c|c|c c|c c|c c|c c|} 
\hline 
\multicolumn{2}{|c|}{Scheme} & \multicolumn{2}{c|}{N=1000} &  \multicolumn{2}{c|}{N=2000} &\multicolumn{2}{c|}{N=3000} &  \multicolumn{2}{c|}{N=4000} 
\\ 
\hline
-& $\lambda$&  $r=-0.5$ & $r=+0.5$ &$r=-0.5$ & $r=+0.5$ &$r=-0.5$ & $r=+0.5$ & $r=-0.5$ & $r=+0.5$ \\
\hline

aSA$_{r}$&$ 0 $ & $ 7.38$e$+1 $ & $ 4.13$e$+2 $ & $ 1.40$e$+1 $ & $ 1.48$e$+2 $ & $ 5.13$e$+0 $ & $ 8.07$e$+1 $ & $ 2.50$e$+0 $ & $ 5.24$e$+1 $ \\ \hline
&$ 0.1 $ & $ 2.72$e$+1 $ & $ 5.28$e$+1 $ & $ 2.30$e$+0 $ & $ 5.41$e$+0 $ & $ 4.07$e$-1 $ & $ 1.07$e$+0 $ & $ 1.02$e$-1 $ & $ 2.89$e$-1 $ \\
&$ 0.2 $ & $ 1.28$e$+1 $ & $ 1.90$e$+1 $ & $ 6.53$e$-1 $ & $ 1.10$e$+0 $ & $ 7.58$e$-2 $ & $ 1.37$e$-1 $ & $ 1.34$e$-2 $ & $ 2.51$e$-2 $ \\
&$ 0.3 $ & $ 6.84$e$+0 $ & $ 8.78$e$+0 $ & $ 2.38$e$-1 $ & $ 3.31$e$-1 $ & $ 2.04$e$-2 $ & $ 2.96$e$-2 $ & $ 3.01$e$-3 $ & $ 4.35$e$-3 $ \\
&$ 0.4 $ & $ 3.98$e$+0 $ & $ 4.66$e$+0 $ & $ 1.02$e$-1 $ & $ 1.26$e$-1 $ & $ 7.16$e$-3 $ & $ 8.99$e$-3 $ & $ 1.23$e$-3 $ & $ 1.44$e$-3 $ \\
aSA$_{\ell,r}$&$ 0.5 $ & $ 2.47$e$+0 $ & $ 2.72$e$+0 $ & $ 4.93$e$-2 $ & $ 5.60$e$-2 $ & $ 3.33$e$-3 $ & $ 3.75$e$-3 $ & $ 8.61$e$-4 $ & $ 9.00$e$-4 $ \\
&$ 0.6 $ & $ 1.61$e$+0 $ & $ 1.70$e$+0 $ & $ 2.64$e$-2 $ & $ 2.83$e$-2 $ & $ 2.09$e$-3 $ & $ 2.19$e$-3 $ & $ 8.58$e$-4 $ & $ 8.65$e$-4 $ \\
&$ 0.7 $ & $ 1.09$e$+0 $ & $ 1.12$e$+0 $ & $ 1.57$e$-2 $ & $ 1.63$e$-2 $ & $ 1.72$e$-3 $ & $ 1.74$e$-3 $ & $ 1.00$e$-3 $ & $ 9.96$e$-4 $ \\
&$ 0.8 $ & $ 7.66$e$-1 $ & $ 7.74$e$-1 $ & $ 1.04$e$-2 $ & $ 1.06$e$-2 $ & $ 1.76$e$-3 $ & $ 1.77$e$-3 $ & $ 1.26$e$-3 $ & $ 1.26$e$-3 $ \\
&$ 0.9 $ & $ 5.53$e$-1 $ & $ 5.54$e$-1 $ & $ 7.65$e$-3 $ & $ 7.68$e$-3 $ & $ 1.84$e$-3 $ & $ 1.84$e$-3 $ & $ 1.60$e$-3 $ & $ 1.60$e$-3 $ \\ \hline
SA&$ 1 $ & $ 4.12$e$-1 $ & $ 4.12$e$-1 $ & $ 6.04$e$-3 $ & $ 6.04$e$-3 $ & $ 2.07$e$-3 $ & $ 2.07$e$-3 $ & $ 2.30$e$-3 $ & $ 2.30$e$-3 $ \\

\hline
\end{tabular} 
\caption{Comparison of gap function:  $r=-0.5$ and $r=+0.5$.} 
\label{tab:ave5}  
\end{table}

\section{Concluding remarks}\label{sec:concl}
{We consider a stochastic variational inequality problem with
monotone and possibly non-Lipschitzian maps over a closed, convex, and
compact set. Much of the past research aimed at deriving almost sure
convergence of the iterates has required strong monotonicity and Lipschitz continuity of
the map. In the first part of the paper, by conducting a simultaneous
smoothing and regularization of the map, we develop a regularized smoothing
	stochastic approximation (RSSA) scheme. By updating the smoothing
	parameter, regularization parameter, and the steplength sequence
	after every iteration at suitably defined rates, the generated
	sequence can be shown to converge almost surely to the solution of
	the original problem. Unfortunately, such a scheme does not
	immediately admit a non-asymptotic rate statement, motivating the
	development of an averaging-based scheme.  In the second part of the
	paper, we \uvs{generalize standard averaging-based SA schemes to a
		setting} where the weights of the averaged sequence are
	parameterized in terms of a constant $r$ which is known to be $1$
	in the classic averaging methods. \uvs{We show that when $r < 1$,
		the mean of the gap function diminishes to zero at a rate of
			${\cal O}(1/K^{(1/6)-\delta})$ while the optimal rate of ${\cal O}(1/\sqrt{K})$ is recovered when smoothing and regularization is suppressed. Note that the latter rate is superior to the sub-optimal rate of ${\cal O}(\ln(K)/\sqrt{K})$ seen in standard averaging
		schemes with $r=1$.} \uvs{Furthermore}, a window-based averaging method using
	$r<1$ is \us{also} shown
	to recover the optimal convergence rate. \uvs{Numerical experiments on a
	classical Nash-Cournot game provide several insights. First, we note
	that the RSSA schemes with a.s. convergence guarantees produce
	sequences that display far less variance in terms of the gap
	function in contrast with their counterparts that arise from
	guarantees of convergence in the mean. Second, significant benefits of using
	$r < 1$ are observed in comparison with $r = 1$ particularly when
	$\lambda$ is smaller. Yet, much remains to be
	understood regarding the optimal (or good) choices of $r$, given how crucial a
	role it plays in the empirical performance.}

\section*{Appendix}

{\noindent \bf Proof of Lemma~\ref{lemma:gap-positive}.}
\begin{proof}
(a)\ {We start by showing part (i)}. Let $x \in X$ be \an{arbitrary. Then, we} have  
\[\hbox{G}(x)= \sup_{y \in X} F(y)^T(x-y) \geq F(z)^T(x-z), \quad \hbox{for any }z \in X.\]
For $z=x$, the preceding inequality implies that $\hbox{G}(x) \geq F(x)^T(x-x)=0.$
Therefore, the gap function (\ref{equ:gapf}) is nonnegative for any $x \in X$, {thus showing part (i). 
To prove part (ii), assume that $x^* \in X^*_w$}. Relation (\ref{equ:weak}) implies that 
\[F(y)^T(x^*_w-y) \leq 0, \qquad \hbox{for any } y \in X. \]
Invoking the definition of $\hbox{G}$ in (\ref{equ:gapf}), from preceding inequality we have 
\[\hbox{G}(x^*_w) \leq 0, \qquad \hbox{for any } y \in X. \]
However, since $x^*_w \in X$, Lemma \ref{lemma:gap-positive}(a)
	indicates that $\hbox{G}(x^*_w) \geq 0$. Therefore, we conclude that
	for any $x^* \in X^*_w$, we have $\hbox{G}(x^*)=0$. Now assume that
	$\hbox{G}(x)=0$ for some $x \in X$. Therefore, $\sup_{y \in X}
	F(y)^T(x-y)=0$ implying that $ F(y)^T(x-y)\leq 0$ for any $y \in X$.
	Equivalently, we have $ F(y)^T(y-x)\geq 0$ for any $y \in X$. This
	implies that $x \in X^*_w$.

\noindent(b)(i) \ 
Let $\{u_k\} \subset \Real^n$ be an arbitrary sequence in $X$ such that $\lim_{k \rightarrow \infty} u_k =u_0$. 
Since $X$ is a closed set, we have $u_0 \in X$. We want to show that $\lim_{k \rightarrow \infty}\hbox{G}(u_k)=\hbox{G}(u_0)$. We show this relation in two steps. First, using relation (\ref{equ:gapf}), for any $k \geq 0$, we have
\begin{align}\label{ineq:gap-continuity}
\hbox{G}(u_k)&= \sup_{y \in X} F(y)^T(u_k-y) = \sup_{y \in X}
F(y)^T(u_k-u_0+u_0-y)\notag \\ 
&=\sup_{y \in X} \left(F(y)^T(u_0-y)+F(y)^T(u_k-u_0)\right) \notag \\ & \leq \sup_{y \in X} F(y)^T(u_0-y) +\sup_{y \in X} F(y)^T(u_k-u_0),
\end{align}
where in the second relation we add and subtract $u_0$, and in the last
relation we used the well-known inequality $\sup_A \left(f+g\right) \leq
\sup_A f + \sup_A g$ for any two real valued functions $f$ and $g$
defined on the set $A$. Using the Cauchy-Schwarz inequality and relation (\ref{ineq:gap-continuity}), we obtain for any $k \geq 0$,
\begin{align*}
\hbox{G}(u_k)
&\leq \sup_{y \in X} F(y)^T(u_0-y) +\sup_{y \in X}\left( \|F(y)\|\|u_k-u_0\|\right)\cr 
&= \sup_{y \in X} F(y)^T(u_0-y) +\|u_k-u_0\| \sup_{y \in X} \|F(y)\| \leq \sup_{y \in X} F(y)^T(u_0-y) +C\|u_k-u_0\|, 
\end{align*}
where in the last inequality we used the boundedness assumption of the
mapping $F$ over the set $X$. Taking \us{the limit superiors} on  both sides of the
	preceding inequality, we obtain
\begin{align}\label{ineq:gap-continuity2}
\an{\limsup_{k \rightarrow \infty}}\, \hbox{G}(u_k)
&\leq \an{\limsup_{k \rightarrow \infty}} \left (\sup_{y \in X} F(y)^T(u_0-y) +C\|u_k-u_0\|\right)\cr 
&= \sup_{y \in X} F(y)^T(u_0-y)+ C\lim_{k \rightarrow \infty}  \|u_k-u_0\|= \hbox{G}(u_0),
\end{align}
where the last relation is obtained by \us{recalling} that $u_0$ is the
limit point of the sequence $\{u_k\}$. In the second step of the proof
for continuity of $\hbox{G}(x)$, using relation (\ref{equ:gapf}), for
any $y \in X$ and any $k \geq 0$, we have $\hbox{G}(u_k) \geq
F(y)^T(u_k-y)$. Let $v \in \Real^n$ be an arbitrary fixed vector in $X$.
Therefore, the preceding inequality holds for $y=v$, i.e., 
\begin{align*}
\hbox{G}(u_k) \geq F(v)^T(u_k-v).
\end{align*}
Taking \an{the limit inferiors in} both sides of the preceding inequality when $k$ goes to infinity, we have 
\begin{align*}
\an{\liminf_{k \rightarrow \infty}}\, \hbox{G}(u_k) 
&\geq \an{\liminf_{k \rightarrow \infty}} F(v)^T(u_k-v) = F(v)^T\left(\lim_{k \rightarrow \infty}(u_k)-v\right)
= F(v)^T(u_0-v).
\end{align*}
Since the preceding relation holds for any arbitrary $v \in X$, taking supremum from the right-hand side and using the relation (\ref{equ:gapf}) we obtain 
\begin{align}\label{ineq:gap-continuity3}
\an{\liminf_{k \rightarrow \infty}}\, \hbox{G}(u_k) &\geq \sup_{v\in X} F(v)^T(u_0-v)=\hbox{G}(u_0).
\end{align}
From (\ref{ineq:gap-continuity2}) and (\ref{ineq:gap-continuity3}), we
conclude that the gap function $\hbox{G}(x)$ is continuous at any $x \in
X$.

\noindent (b)(ii) \ For any $x,y \in X$ we have 
\begin{align*}
F(y)^T(x-y) &\leq  \|F(y)\|\|x-y\| \leq \|F(y)\|(\|x\| + \|y\|) \leq 2CM,
\end{align*}
where the first, second, and third inequalities follow from  the Cauchy-Schwarz
inequality,  the triangle inequality, and the boundedness assumption on
	the mapping $F$ and the set $X$. Taking the supremum {over $y\in X$ in the
	preceding relation and by using~\eqref{equ:gapf}}, we obtain the desired result.
\end{proof}

\fyRev{
{\noindent \bf Proof of Lemma \ref{lemma:ineqHarmonic}.} 
\begin{proof}
(a) Consider the function $h(x)=\frac{1}{x}$ for \an{$x>0$}. 
Since $h(x)$ is a continuous decreasing function, we have 
\[\sum_{k=\ell}^{N-1}\frac{1}{k+1}=\sum_{k=\ell+1}^N\frac{1}{k} =\frac{1}{\ell+1}+ \sum_{k=\ell+2}^N\frac{1}{k} \leq \frac{1}{\ell+1} +\int_{\ell+1}^N\frac{1}{x}\ dx = \frac{1}{\ell+1} +\ln\left(\frac{N}{\ell+1}\right).\]
Also, we \uvs{may} write 
\[\sum_{k=\ell}^{N-1}\frac{1}{k+1}=\sum_{k=\ell+1}^N\frac{1}{k}  \geq \int_{\ell+1}^{N+1}\frac{1}{x}\ dx =\ln\left(\frac{N+1}{\ell+1}\right).\]
Therefore, the desired result in part (a) holds.\\
(b) Let us define $g(x)= x^\alpha$ for $x>0$. Consider the case where $\alpha >0$. This implies that $g(x)$ is an increasing function and we can write
\[\sum_{k=\ell}^{N-1}(k+1)^\alpha=\sum_{k=\ell+1}^N k^\alpha \leq \int_{\ell+1}^{N+1}x^\alpha \leq (\ell+1)^\alpha+\frac{(N+1)^{\alpha+1}-(\ell+1)^{\alpha+1}}{\alpha+1},\]
and 
\[\sum_{k=\ell}^{N-1}(k+1)^\alpha=\sum_{k=\ell+1}^N k^\alpha \geq \int_{\ell}^{N}x^\alpha \ dx\geq \int_{\ell+1}^{N}x^\alpha \ dx =\frac{N^{\alpha+1}-(\ell+1)^{\alpha+1}}{\alpha+1}. \]
Therefore the result of part (b) holds for $\alpha >0$.
Now, we consider the case where $\alpha <0$ and $\alpha\neq -1$. This implies that $g(x)$ defined in part (b) is a decreasing function and we can write 
\begin{align*}&\sum_{k=\ell}^{N-1}(k+1)^\alpha=(\ell+1)^\alpha+\sum_{k=\ell+2}^N k^\alpha \leq (\ell+1)^\alpha+\int_{\ell+1}^{N}x^\alpha \ dx\leq (\ell+1)^\alpha+\int_{\ell+1}^{N+1}x^\alpha \ dx\\ &=(\ell+1)^\alpha+\frac{(N+1)^{\alpha+1}-(\ell+1)^{\alpha+1}}{\alpha+1},\end{align*}
and 
\[\sum_{k=\ell}^{N-1}(k+1)^\alpha=\sum_{k=\ell+1}^N k^\alpha \geq \int_{\ell+1}^{N+1}x^\alpha \ dx\geq \int_{\ell+1}^{N}x^\alpha \ dx =\frac{N^{\alpha+1}-(\ell+1)^{\alpha+1}}{\alpha+1}. \]
Therefore the result of part (b) holds for $\alpha <0$ and $\alpha\neq -1$.
\end{proof}}

\vspace{-0.1in}
\bibliographystyle{amsplain}
\bibliography{wsc11-v02,demobib}

\providecommand{\bysame}{\leavevmode\hbox to3em{\hrulefill}\thinspace}
\providecommand{\MR}{\relax\ifhmode\unskip\space\fi MR }
\providecommand{\MRhref}[2]{%
  \href{http://www.ams.org/mathscinet-getitem?mr=#1}{#2}
}
\providecommand{\href}[2]{#2}
\begin{thebibliography}{10}

\bibitem{Bertsekas73}
D.~P. Bertsekas, \emph{Stochastic optimization problems with nondifferentiable
  cost functionals}, Journal of Optimization Theory and Applications
  \textbf{12} (1973), no.~2, 218--231.

\bibitem{birge97introduction}
J.~R. Birge and F.~Louveaux, \emph{Introduction to stochastic programming:
  Springer series in operations research}, Springer, 1997.

\bibitem{knitro}
R.~H. Byrd, M.~E. Hribar, and J.~Nocedal, \emph{An interior point algorithm for
  large-scale nonlinear programming}, SIAM Journal on Optimization \textbf{9}
  (1999), 877�--900.

\bibitem{Zeevi11}
D.~Cicek, M.~Broadie, and A.~Zeevi, \emph{General bounds and finite-time
  performance improvement for the {Kiefer-Wolfowitz} stochastic approximation
  algorithm}, Operations Research \textbf{59} (2011), 1211--1224.

\bibitem{Duchi12}
J.~C. Duchi, P.~L. Bartlett, and Martin~J. Wainwright, \emph{Randomized
  smoothing for stochastic optimization}, SIAM Journal on Optimization (SIOPT)
  \textbf{22} (2012), no.~2, 674--701.

\bibitem{Ermoliev83}
Y.~M. Ermoliev, \emph{Stochastic quasigradient methods and their application to
  system optimization}, Stochastics \textbf{9} (1983), 1--36.

\bibitem{facchinei02finite}
F.~Facchinei and J.-S. Pang, \emph{Finite-dimensional variational inequalities
  and complementarity problems. {V}ols. {I,II}}, Springer Series in Operations
  Research, Springer-Verlag, New York, 2003.

\bibitem{Ghad12}
S.~Ghadimi and G.~Lan, \emph{Optimal stochastic approximation algorithms for
  strongly convex stochastic composite optimization, part {I}: a generic
  algorithmic framework}, SIAM Journal on Optimization \textbf{22} (2012),
  no.~4, 1469--1492.

\bibitem{golshtein89modified}
E.~G. Golshtein and N.~V. Tretyakov, \emph{Modified {L}agrangians and monotone
  maps in optimization}, Wiley-Interscience Series in Discrete Mathematics and
  Optimization, John Wiley \& Sons Inc., New York, 1996, Translated from the
  1989 Russian original by Tretyakov, A Wiley-Interscience Publication.
  \MR{MR1386892 (97a:90001)}

\bibitem{Houyuan08}
H.~Jiang and H.~Xu, \emph{Stochastic approximation approaches to the stochastic
  variational inequality problem}, IEEE Transactions in Automatic Control
  \textbf{53} (2008), no.~6, 1462--1475.

\bibitem{Nem11}
A.~Juditsky, A.~Nemirovski, and C.~Tauvel, \emph{Solving variational
  inequalities with stochastic mirror-prox algorithm}, Stochastic Systems
  \textbf{1} (2011), no.~1, 17--58.

\bibitem{Jud11}
\bysame, \emph{Solving variational inequalities with stochastic mirror-prox
  algorithm}, Stochastic Systems (2011), DOI: 10.1214/10--SSY011, 17--58.

\bibitem{kannan10online}
A.~Kannan and U.~V. Shanbhag, \emph{Distributed computation of equilibria in
  monotone {N}ash games via iterative regularization techniques}, SIAM Journal
  of Optimization \textbf{22} (2012), no.~4, 1177--1205.

\bibitem{kannan13addressing}
A.~Kannan, U.~V. Shanbhag, and H.~M. Kim, \emph{Addressing supply-side risk in
  uncertain power markets: stochastic {N}ash models, scalable algorithms and
  error analysis}, Optimization Methods and Software \textbf{28} (2013), no.~5,
  1095--1138.

\bibitem{Knopp51}
K.~Knopp, \emph{Theory and applications of infinite series}, Blackie \& Son
  Ltd., Glasgow, Great Britain, 1951.

\bibitem{koshal12regularized}
J.~Koshal, A.~Nedi\'{c}, and U.~V. Shanbhag, \emph{Regularized iterative
  stochastic approximation methods for variational inequality problems}, IEEE
  Transactions on Automatic Control \textbf{58(3)} (2013), 594--609.

\bibitem{Kush03}
H.~J. Kushner and G.~G. Yin, \emph{Stochastic approximation and recursive
  algorithms and applications}, Springer, New York, 2003.

\bibitem{DeFarias08}
H.~Lakshmanan and D.~Farias, \emph{Decentralized recourse allocation in dynamic
  networks of agents}, SIAM Journal on Optimization \textbf{19} (2008), no.~2,
  911--940.

\bibitem{larsson94class}
T.~Larsson and M.~Patriksson, \emph{A class of gap functions for variational
  inequalities}, Mathematical Programming \textbf{64} (1994), 53--79.

\bibitem{lu14symmetric}
S.~Lu, \emph{Symmetric confidence regions and confidence intervals for normal
  map formulations of stochastic variational inequalities}, To appear.

\bibitem{lu13confidence}
S.~Lu and A.~Budhiraja, \emph{Confidence regions for stochastic variational
  inequalities}, Math. Oper. Res. \textbf{38} (2013), no.~3, 545--568.

\bibitem{Polak84}
D.~Q. Mayne and E.~Polak, \emph{Nondifferential optimization via adaptive
  smoothing}, Journal of Optimization Theory and Applications \textbf{43}
  (1984), no.~4.

\bibitem{metzler03nash-cournot}
C.~Metzler, B.F. Hobbs, and J.-S. Pang, \emph{Nash-cournot equilibria in power
  markets on a linearized dc network with arbitrage: Formulations and
  properties}, Networks and Spatial Theory \textbf{3} (2003), no.~2, 123--150.

\bibitem{Nedic14}
A.~Nedi\'c and S.~Lee, \emph{On stochastic subgradient mirror-descent algorithm
  with weighted averaging}, SIAM Journal on Optimization \textbf{24} (2014),
  no.~1, 84--107.

\bibitem{nemirovski_robust_2009}
A.~Nemirovski, A.~Juditsky, G.~Lan, and A.~Shapiro, \emph{Robust stochastic
  approximation approach to stochastic programming}, {SIAM} Journal on
  Optimization \textbf{19} (2009), no.~4, 1574--1609.

\bibitem{norkin93optimization}
V.~I. Norkin, \emph{The analysis and optimization of probability functions},
  Tech. report, International Institute for Applied Systems Analysis technical
  report, 1993, WP-93-6.

\bibitem{Polyak87}
B.T. Polyak, \emph{Introduction to optimization}, Optimization Software, Inc.,
  New York, 1987.

\bibitem{Polyak92}
B.T. Polyak and A.B. Juditsky, \emph{Acceleration of stochastic approximation
  by averaging}, SIAM J. Control Optim. \textbf{30} (1992), no.~4, 838--855.

\bibitem{ravat11characterization}
U.~Ravat and U.~V. Shanbhag, \emph{On the characterization of solution sets in
  smooth and nonsmooth stochastic convex {N}ash games}, SIAM Journal of
  Optimization \textbf{21} (2011), no.~3, 1046--1081.

\bibitem{robbins51sa}
H.~Robbins and S.~Monro, \emph{A stochastic approximation method}, Ann. Math.
  Statistics \textbf{22} (1951), 400--407.

\bibitem{Rockafellar98}
R.T. Rockafellar and R.J-B Wets, \emph{Variational analysis}, Springer, Berlin,
  1998.

\bibitem{shap03sampling}
A.~Shapiro, \emph{Monte carlo sampling methods}, Handbook in Operations
  Research and Management Science, vol.~10, Elsevier Science, Amsterdam, 2003,
  pp.~353--426.

\bibitem{SPbook}
A.~Shapiro, D.~Dentcheva, and A.~Ruszczynski, \emph{{Lectures on stochastic
  programming: modeling and theory}}, The society for industrial and applied
  mathematics and the mathematical programming society, Philadelphia, USA,
  2009.

\bibitem{steklov1}
V.~A. Steklov, \emph{Sur les expressions asymptotiques decertaines fonctions
  définies par les équations différentielles du second ordre et leers
  applications au problème du dévelopement d'une fonction arbitraire en
  séries procédant suivant les diverses fonctions}, Comm. Charkov Math. Soc.
  \textbf{2} (1907), no.~10, 97--199.

\bibitem{wang14}
M.~Wang and D.~P. Bertsekas, \emph{Incremental constraint projection methods
  for variational inequalities}, Mathematical Programming (2014), DOI
  10.1007/s10107--014--0769--x.

\bibitem{Xu01}
H.~Xu, \emph{Adaptive smoothing method, deterministically computable
  generalized jacobians, and the newton method}, Journal of Optimization Theory
  and Applications \textbf{109} (2001), no.~1, 215--224.

\bibitem{Xu10}
H.~Xu, \emph{Sample average approximation methods for a class of stochastic
  variational inequality problems}, Asia-Pacific Journal of Operational
  Research \textbf{27} (2010), no.~1, 103--119.

\bibitem{Farzad03}
F.~Yousefian, A.~Nedi\'c, and U.~V. Shanbhag, \emph{Self-tuned stochastic
  approximation schemes for non-{L}ipschitzian stochastic multi-user
  optimization and {N}ash games}, {IEEE Transactions on Automatic Control}, to
  appear, an extended version of the paper is available at
  http://arxiv.org/abs/1301.1711, DOI 10.1109/TAC.2015.2478124.

\bibitem{Farzad1}
\bysame, \emph{On stochastic gradient and subgradient methods with adaptive
  steplength sequences}, Automatica \textbf{48} (2012), no.~1, 56--67, An
  extended version of the paper available at: http://arxiv.org/abs/1105.4549.

\bibitem{Farzad-WSC13}
\bysame, \emph{A regularized smoothing stochastic approximation ({RSSA})
  algorithm for stochastic variational inequality problems}, Proceedings of the
  2013 Winter Simulation Conference (2013), 933--944.

\end{thebibliography}
\end{document}